\DeclareMathAlphabet\mathbfit{OML}{cmm}{b}{it}
\let\osubsection\subsection
\def\subsection#1{\osubsection{\boldmath{#1}}}
\newlist{enumarabic}{enumerate}{1}
\setlist[enumarabic]{font=\normalfont,label=(\arabic*),leftmargin=0.3in}
\newlist{enumroman}{enumerate}{1}
\setlist[enumroman]{font=\normalfont,label=(\roman*),leftmargin=0.3in}
\numberwithin{equation}{section}
\theoremstyle{plain}
\newtheorem{theorem}{Theorem}[section]
\newtheorem{proposition}[theorem]{Proposition}
\newtheorem{lemma}[theorem]{Lemma}
\newtheorem{corollary}[theorem]{Corollary}
\theoremstyle{definition}
\newtheorem{remark}[theorem]{Remark}
\newtheorem{example}[theorem]{Example}
\theoremstyle{remark}
\newtheorem*{acknowledgements}{Acknowledgements}
\let\newterm\emph
\def\arxiv#1{\href{http://arxiv.org/abs/#1}{\texttt{arXiv:#1}}}
\def\cf{\emph{cf.}}
\let\epsilon\varepsilon
\let\phi\varphi
\let\emptyset\varnothing
\def\N{\mathbb N}
\def\Z{\mathbb Z}
\def\R{\mathbb R}
\def\C{\mathbb C}
\def\CP{\mathbb{CP}}
\def\pair#1{{\langle#1\rangle}}
\def\bigpair#1{{\bigl\langle#1\bigr\rangle}}
\def\deg#1{|#1|}
\DeclareMathOperator{\Tor}{Tor}
\DeclareMathOperator{\Hom}{Hom}
\DeclareMathOperator*{\colim}{colim}
\DeclareMathOperator{\Id}{id}
\def\kk{\Bbbk}
\def\Ll{\boldsymbol\Lambda}
\def\Sl{\mathbf S}
\def\Kl{\mathbf K}
\def\Su{\Sl^{*}}
\def\DJ{DJ}
\def\BB{\mathbf{B}}
\def\BBone{\mathbf{1}}
\def\aa{\mathbfit{a}}
\def\cc{\mathbfit{c}}
\let\topfont\mathcal
\let\algfont\mathbfit
\def\TT{\topfont{T}}
\def\KK{\topfont{K}}
\def\LL{\topfont{L}}
\def\XX{\topfont{X}}
\def\ZZ{\topfont{Z}}
\def\SS{\topfont{S}}
\def\DD{\topfont{D}}
\def\EEKK{\topfont{E K}}
\def\TTT{\algfont{T}}
\def\KKK{\algfont{K}}
\def\LLL{\algfont{L}}
\def\XXX{\algfont{X}}
\def\ZZZ{\algfont{Z}}
\def\hatphi{\tilde\phi}
\def\hatN{\tilde N}
\def\hatSigma{\tilde\Sigma}
\def\hatA{\tilde A}
\def\hata{\tilde a}
\def\DDDD{\mathbb D}
\def\SSSS{\mathbb S}
\def\SimpDJ{DJ}
\def\SimpSOne{S^{1}}
\def\PolyProd#1#2#3{(#2,#3)^{#1}}
\def\bigPolyProd#1#2#3{\bigl(#2,#3\bigr)^{#1}}
\def\AW{AW}
\def\AWhat{\skew5\widehat{\AW}}
\def\AWu#1{\AW_{\mkern -1mu #1}}
\def\AWhatu#1{\AWhat_{\mkern -2mu #1}}
\def\cupone{\mathbin{\cup_1}}
\def\SC{\mathcal{S}}
\def\timesunder#1{\mathbin{\mathchoice
  {\mathop\times\limits_{\mkern-5mu #1\mkern-5mu}}%
  {\times_{#1}}{\times_{#1}}{\times_{#1}}}}
\def\hatq{\hat{q}}
\def\tildeH{\tilde{H}}
\def\hatH{\hat{H}}
\def\smallhalf{{\textstyle{\frac{1}{2}}}}
\def\Ainfty{\(A_{\infty}\)}
\def\Ktw{\Xi}
\def\hatKtw{\hat\Ktw}
\def\hatTor#1{\mathop{\widehat{\mathrm{Tor}}}(#1)}
\def\xv#1{x_{#1}}
\def\xvi#1#2{x_{#1}^{#2}}
\def\hatawv#1#2{\hata_{#1}^{#2}}
\def\lambdavs#1#2{\lambda_{#1,#2}}
\def\tildelambdavs#1#2{\tilde\lambda_{#1,#2}}
\def\ww{o}
\def\wwonevs#1#2{\ww_{#1,#2}^{1}}
\def\wwtwovs#1#2{\ww_{#1,#2}^{2}}
\def\wwonevsv#1#2#3{\ww_{#1,#2}^{1,#3}}
\def\wwtwovsv#1#2#3{\ww_{#1,#2}^{2,#3}}
\def\tildewwonevsv#1#2#3{\tilde{\ww}_{#1,#2}^{1,#3}}
\def\tildewwtwovsv#1#2#3{\tilde{\ww}_{#1,#2}^{2,#3}}
\def\bis#1#2{b_{#1,#2}}
\def\AA{\Omega}
\def\Jtwo{I_2}
\def\Jone{I_1}
\def\ii{\mathbfit{i}}
\def\bfsigma{\boldsymbol{\sigma}}
\def\NA{N'}
\def\hatNA{\hatN'}
\def\SigmaA{\Sigma'}
\def\VA{V'}
\def\vA{v'}
\def\TA{T'}
\def\TTA{\TT'}
\def\KA{K'}
\def\LA{L'}
\def\LLA{\LL'}
\def\LLLA{\LLL'}
\def\KKA{\KK'}
\def\rhoA{(\rho')}
\def\cA{c'}
\def\RA{R'}
\def\kappaA{\kappa'}
\def\DDDDA{\DDDD'}
\def\SSSSA{\SSSS'}
\def\posetA{P'}
\def\posetB{P}
\def\parq#1#2{\ZZ_{#1}/#2}
\def\parqdef#1#2{\parq{#1}{#2}}
\def\bigparq#1#2{\ZZ_{#1}\bigm/#2}
\def\NB{N}
\def\nB{n}
\def\hatNB{\hatN}
\def\SigmaB{\Sigma}
\def\VB{V}
\def\vB{v}
\def\wB{w}
\def\TB{T}
\def\TTB{\TT}
\def\KB{K}
\def\LB{L}
\def\LLB{\LL}
\def\LLLB{\LLL}
\def\KKB{\KK}
\def\rhoB{\rho}
\def\cB{c}
\def\RB{R}
\def\kappaB{\kappa}
\def\piB{\pi}
\def\DDDDB{\DDDD}
\def\SSSSB{\SSSS}
\begin{document}

\title{Cohomology of smooth toric varieties: naturality}
\author{Matthias Franz}
\thanks{M.\,F.\ was supported by an NSERC Discovery Grant.
  X.\,F.\ was supported by the Fields Institute and by the National Research Foundation of Korea Grant NRF-2019R1A2C2010989.}
\address{Department of Mathematics, University of Western Ontario, London, Ont.\ N6A\;5B7, Canada}
\email{mfranz@uwo.ca}

\author{Xin Fu}
\address{Beijing Institute of Mathematical Sciences and Applications,  Beijing 101408, China}
\email{x.fu@bimsa.cn}

\pdfstringdefDisableCommands{\def\and{, }}
\hypersetup{pdfauthor=\authors,pdftitle={Cohomology of smooth toric varieties: naturality}}

\subjclass[2020]{Primary 14M25, 57S12; secondary 14F45, 55N91}

\begin{abstract}
  Building on the recent computation of
  the cohomology rings of smooth toric varieties and partial quotients of moment-angle complexes,
  we investigate the naturality properties of the resulting isomorphism
  between the cohomology of such a space and the torsion product involving the Stanley--Reisner ring.
  If \(2\) is invertible in the chosen coefficient ring, then the isomorphism is natural
  with respect to toric morphisms, which for partial quotients are defined in analogy with toric varieties.
  In general there are deformation terms that we describe explicitly.
\end{abstract}

\maketitle

\section{Introduction}

The motivation for this paper is to understand the cohomology of smooth, possibly non-compact toric varieties.
More precisely, we want to describe the maps induced in cohomology by morphisms between such varieties.
This builds on the work~\cite{Franz:torprod} of the first author, where the cohomology rings of smooth toric varieties were described.
Recall that a toric variety~\(\XXX_{\Sigma}\) defined by a rational fan~\(\Sigma\) is a complex algebraic variety
containing an algebraic torus~\((\C^{\times})^{n}\) as a dense open subset such that the action of the torus on itself
extends to the ambient variety. Likewise, a morphism between toric varieties is a morphism of algebraic varieties
that restricts to a group morphism between the algebraic tori they contain.
As in~\cite{Franz:torprod}, our methods do not involve any algebraic geometry and work more generally
for so-called partial quotients of moment-angle complexes.
Our results have already been applied in~\cite{FuSoSong} to compute the cohomology rings of \(4\)-dimensional toric orbifolds.

The moment-angle complex~\(\ZZ_{\posetB}\) defined by a simplicial complex~\(\posetB\)
on \(m\)~vertices is a certain subspace of the product of discs~\((\DD^{2})^{m}\) stable under the canonical action of the torus~\(\TT=(\SS^{1})^{m}\).
The quotient~\(\parqdef{\posetB}{\KK}\) by a freely acting closed subgroup~\(\KK\subset\TT\) is called
a partial quotient. It comes equipped with an action of the torus~\(\LL=\TT/\KK\).
(See \Cref{sec:morph} for precise definitions.)
Topologically, smooth toric varieties are partial quotients if they are compact;
otherwise they can be equivariantly deformed into partial quotients, see \Cref{thm:toric-top}.
However, not every partial quotient arises in this way from a toric variety.
Both moment-angle complexes and partial quotients can be more generally defined for simplicial posets
instead of simplicial complexes.

We define morphisms between partial quotients that are inspired by morphisms of toric varieties
and look at the induced maps in cohomology. Since the precise definition of these morphisms
is somewhat lengthy, we state our results in the language of toric varieties in this introduction.

Let \(\kk\) be a principal ideal domain. Additively, the cohomology~\(H^{*}(\XXX_{\Sigma};\kk)\)
of a smooth (complex) toric variety~\(\XXX_{\Sigma}\) with coefficients in~\(\kk\)
was determined in the first author's doctoral dissertation~\cite{Franz:2001}, see also~\cite{Franz:2006}.
The answer is an isomorphism of graded \(\kk\)-modules
\begin{equation}
  \label{eq:iso-cohom-Tor-additive}
  H^{*}(\XXX_{\Sigma};\kk) \cong \Tor_{R}(\kk,\kk[\Sigma]).
\end{equation}
Here \(R\) is the evenly graded symmetric algebra of~\(H^1(\LL;\kk)\) or, in other words,
the evenly graded  algebra of polynomials on~\(N=H_1(\LL;\Z)\) with coefficients in~\(\kk\).
It is isomorphic to the cohomology of the classifying space of the compact torus~\(\LL\) acting on~\(\XXX_{\Sigma}\).
Moreover, \(\kk[\Sigma]\) is the Stanley--Reisner algebra of the fan~\(\Sigma\),
which is isomorphic to the \(\LL\)-equivariant cohomology of~\(\XXX_{\Sigma}\).

If \(\XXX_{\Sigma}\) is (smooth and) compact, then \(\kk[\Sigma]\) is free over~\(R\). This implies
\begin{equation}
  H^{*}(\XXX_{\Sigma};\kk) \cong 
  \kk\otimes_R\kk[\Sigma]\cong \kk[\Sigma]/\mathcal{J},
\end{equation}
where \(\mathcal{J}\) is an ideal generated by linear terms. This quotient description is known
as the Jurkiewicz--Danilov formula and in fact gives \(H^{*}(\XXX_{\Sigma};\kk)\) as an algebra,
\cf~\cite[Thm.~5.3.1]{BuchstaberPanov:2015}.\footnote{%
  It also gives the Chow ring of~\(\XXX_{\Sigma}\). At least for rational coefficients
  this isomorphism between the Chow ring and~\(\Tor_{R}^{0}(\kk,\kk[\Sigma]\)
  holds even for (smooth) non-compact~\(\XXX_{\Sigma}\), see~\cite[Sec.~3.1]{Brion:1996}.}
The multiplicative structure of~\(H^{*}(\XXX_{\Sigma};\kk)\) for non-compact~\(\XXX_{\Sigma}\)
was only computed recently in~\cite{Franz:torprod}. There is one answer
for general~\(\kk\) and a simpler one if \(2\) is invertible in~\(\kk\). Let us start with the latter case.
Recall that there is a canonical product on the torsion product in~\eqref{eq:iso-cohom-Tor-additive} because
all algebras involved are graded commutative.

\begin{theorem}[Franz]
  \label{thm:iso-mult-2}
  If \(2\) is invertible in~\(\kk\), then there is an isomorphism
  \begin{equation*}
    \tilde\Psi_{\Sigma}\colon H^{*}(\XXX_{\Sigma};\kk) \to \Tor_{R}(\kk,\kk[\Sigma])
  \end{equation*}
  that is multiplicative with respect to the canonical product on~\(\Tor\).
\end{theorem}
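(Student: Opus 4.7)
The plan is to realise the additive isomorphism \eqref{eq:iso-cohom-Tor-additive} by a concrete chain-level quasi-isomorphism of differential graded algebras. Since the compact torus~\(L\) acts on~\(\XXX_{\Sigma}\) with equivariant cohomology isomorphic to the Stanley--Reisner algebra \(\kk[\Sigma]\) as an \(R\)-algebra, the Eilenberg--Moore spectral sequence of the Borel fibration \(\XXX_{\Sigma}\to EL\times_L\XXX_{\Sigma}\to BL\) has \(E_{2}=\Tor_R(\kk,\kk[\Sigma])\) and converges to \(H^{*}(\XXX_{\Sigma};\kk)\); for a smooth toric variety one knows (this is essentially the input from \cite{Franz:2001}) that it degenerates and yields the additive isomorphism. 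Upgrading this to an isomorphism of rings requires a genuinely multiplicative model rather than just the filtered spectral-sequence output.

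First I would compute the right-hand side of \eqref{eq:iso-cohom-Tor-additive} using the Koszul resolution of~\(\kk\) over~\(R\). This realises \(\Tor_R(\kk,\kk[\Sigma])\) as the cohomology of the cdga \(\Ll[N]\otimes\kk[\Sigma]\) with Koszul differential and the obvious graded commutative product. On the topological side I would exploit the description of \(\XXX_{\Sigma}\) as a partial quotient of the moment-angle complex \(\ZZ_{\Sigma}\) by a subtorus of~\(L\), together with a cellular or simplicial model of~\(\ZZ_{\Sigma}\), to produce a cochain complex computing \(H^{*}(\XXX_{\Sigma};\kk)\) whose product (inherited from cup product) is controlled via the Alexander--Whitney map \(\AW\), the shuffle map \(\shuffle\), and the \(\cupone\)-cochain that witnesses homotopy commutativity of \(\AW\). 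The task is then to produce an explicit cochain map between the Koszul cdga and this topological model that induces \eqref{eq:iso-cohom-Tor-additive} and respects products up to an explicit coboundary.

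The hypothesis \(\tfrac12\in\kk\) enters precisely in this last step. The Alexander--Whitney product fails to be commutative on cochains, whereas the natural chain-level product on the Tor side is commutative; bridging the two amounts to symmetrising \(\AW\) by averaging with its opposite and absorbing the difference via the \(\cupone\)-homotopy. With \(\tfrac12\) available this symmetrisation yields an honest chain-level algebra map, and the induced cohomology map is the desired \(\tilde\Psi_{\Sigma}\). Without \(\tfrac12\) the best one can do is an \Ainfty{}-type map whose higher operations encode the deformation terms mentioned in the abstract and treated in the general case of \cite{Franz:torprod}. Combined with the degeneration of the Eilenberg--Moore spectral sequence this produces \(\tilde\Psi_{\Sigma}\) as a ring isomorphism.

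The main obstacle is maintaining multiplicative coherence through the zig-zag of quasi-isomorphisms: the symmetrisation introduces cochain-level primitives that must be checked to assemble into a global algebra map, not merely a graded or filtered one, and independence from the auxiliary combinatorial choices (resolution, cellular structure, orderings used in defining \(\AW\) and \(\shuffle\)) has to be verified. This is where the detailed combinatorics of the Koszul complex and the cell structure of the moment-angle complex come together, and it is the technical heart of the argument recalled from \cite{Franz:torprod}.
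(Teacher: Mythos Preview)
This theorem is not proved in the present paper; it is attributed to Franz and quoted from~\cite{Franz:torprod}. The paper does, however, review the construction in \S\ref{sec:main} and \S\ref{sec:nat-2}, and your sketch should be compared against that review.

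Your outline captures the broad architecture correctly: one passes through a zigzag of quasi-isomorphisms relating \(C^{*}(\XX_{\Sigma})\) to the Koszul complex~\(\Kl_{\Sigma}\), and the multiplicativity question reduces to controlling how the product on~\(\Kl_{\Sigma}\) interacts with cup products and their \(\cupone\)-homotopies. But your description of where the hypothesis \(\tfrac12\in\kk\) enters is not accurate. It is not the Alexander--Whitney map that gets symmetrised. Rather, the zigzag~\eqref{eq:quisos-X-Kl} depends on a choice of cycle representative~\(c\in C_{1}(S^{1})\) through the formality map~\(f_{\Sigma,c}^{*}\colon C^{*}(\DJ_{\Sigma})\to\kk[\Sigma]\) (reviewed in \S\ref{sec:formality-BT} and \S\ref{sec:DJ}). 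For the canonical choice~\(c=[1]\) this produces non-trivial twisting terms~\(q_{ij}=f_{\Sigma,c}^{*}(\gamma_{ij})\) and the deformed \(*\)-product of \Cref{thm:iso-mult-general}. When \(2\) is invertible one instead takes the antisymmetric representative \(\tilde{c}=\tfrac12\bigl([1]-[-1]\bigr)\) as in~\eqref{eq:def-tildec}; with this choice the twisting terms vanish (this is \cite[Lemma~8.1]{Franz:torprod}), and the \(*\)-product collapses to the canonical one. So the averaging happens at the level of a \(1\)-cycle in the circle, and the vanishing of the deformation is a computation with the formality map rather than a direct manipulation of the diagonal approximation.

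Your final paragraph correctly locates the technical weight in the zigzag and its multiplicative coherence, and acknowledges that this is the content of~\cite{Franz:torprod}. But the proposal as written is a plausibility sketch rather than a proof: it could not be completed without reproducing the specific machinery developed there --- the hga formality of~\(\DJ_{\Sigma}\), the map~\(\Phi_{\Sigma}\) of~\eqref{eq:quiso-Kl-baronesided}, and the homotopy~\eqref{eq:H-Kl-Kl-B} --- none of which your outline supplies or replaces.
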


{
\def\LA{\LL'}
\def\LB{\LL}
Our first main result is as follows.
Recall that any morphism of smooth toric varieties~\(\phi\colon\XXX_{\SigmaA}\to\XXX_{\SigmaB}\)
induces a map of algebras~\(\phi^{*}\colon \RB\to\RA\)
and also a map
\begin{equation}
  \label{eq:map-SR-intro}
  \hatphi^{*}\colon \kk[\SigmaB] = H_{\LB}^{*}(\XXX_{\SigmaB};\kk) \to H_{\LA}^{*}(\XXX_{\SigmaA};\kk) = \kk[\SigmaA]
\end{equation}
compatible with the former.
Algebraically, the map~\eqref{eq:map-SR-intro} can be described as the pull-back of piecewise polynomials
along the map of fans~\((\NA,\SigmaA)\to(\NB,\SigmaB)\), see \Cref{rem:pullback}.

\begin{theorem}
  \label{thm:intro:nat-2}
  Suppose that \(2\) is invertible in~\(\kk\).
  The isomorphism~\(\tilde\Psi_{\Sigma}\) is natural with respect to morphisms of toric varieties.
\end{theorem}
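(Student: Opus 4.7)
The plan is to trace the explicit cochain-level construction of~$\tilde\Psi_{\Sigma}$ from~\cite{Franz:torprod} through a toric morphism and verify naturality step by step. Recall that this construction proceeds via the moment-angle complex: one starts from an explicit model of $C^{*}(\XXX_{\Sigma})$ coming from the polyhedral-product description of~$\ZZZ_{\Sigma}$ and its partial quotient, produces an $R$-linear resolution yielding a chain-level representative of $\Tor_{R}(\kk,\kk[\Sigma])$, and finally symmetrizes an (\emph{a priori} asymmetric) comparison map in order to obtain a strictly multiplicative one; the symmetrization is what uses the invertibility of~$2$ in~$\kk$.

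First I would lift the problem to moment-angle complexes. A toric morphism $\phi\colon\XXX_{\SigmaA}\to\XXX_{\SigmaB}$ is covered by an equivariant map $\ZZZ_{\SigmaA}\to\ZZZ_{\SigmaB}$ with respect to the homomorphism of big tori induced by the map of fans~$(\NA,\SigmaA)\to(\NB,\SigmaB)$. On this level the cellular cochain model, the Eilenberg--Zilber shuffle and Alexander--Whitney maps, and the Koszul-type resolution of~$\kk$ over~$R$ are all manifestly functorial, and the induced map on equivariant cohomology is precisely the Stanley--Reisner map~$\hatphi^{*}$ of~\eqref{eq:map-SR-intro}. Descending from moment-angle complexes to the partial quotients~$\XXX_{\Sigma}$ is then a matter of the naturality of the Eilenberg--Moore construction applied to the change of structure groups~$\LA\to\LB$, together with compatibility of the polynomial pullback $\phi^{*}\colon \RB\to\RA$ with the dual lattice map.

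The main obstacle is the symmetrization step. The underlying asymmetric comparison map is natural by construction, but the symmetrization averages over an action whose combinatorics depends on the indexing of rays of the fan, and a toric morphism need not be injective on rays (it may identify several rays of~$\SigmaA$ with one ray of~$\SigmaB$ or collapse rays into the interior of cones). The crux is to check that averaging on the target intertwines with averaging on the source under~$\hatphi^{*}$; I expect the discrepancy to be a cochain boundary, with the factor~$1/2$ absorbing the residual sign obstruction. This is the concrete shadow of the \emph{deformation terms} announced in the abstract, which vanish exactly when $2\in\kk^{\times}$ and thereby leave the square commutative on cohomology, yielding the naturality of~$\tilde\Psi_{\Sigma}$.
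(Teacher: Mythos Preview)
Your proposal has a genuine gap: you misidentify where the non-naturality lives. You assert that on the moment-angle level ``the cellular cochain model, the Eilenberg--Zilber shuffle and Alexander--Whitney maps, and the Koszul-type resolution of~$\kk$ over~$R$ are all manifestly functorial'', and that ``the underlying asymmetric comparison map is natural by construction''. Neither of these is true in the relevant sense. The crux of the construction in~\cite{Franz:torprod} is the formality quasi-isomorphism $f_{\Sigma,c}^{*}\colon C^{*}(\DJ_{\Sigma})\to\kk[\Sigma]$, which depends on a chosen loop representative~$c\in C_{1}(S^{1})$ and is \emph{not} natural with respect to toric morphisms: the square
\[
\begin{tikzcd}
C^{*}(\DJ_{\SigmaB}) \arrow{r}{\hatphi^{*}} \arrow{d}[left]{f_{\SigmaB,c}^{*}} & C^{*}(\DJ_{\SigmaA}) \arrow{d}{f_{\SigmaA,c'}^{*}} \\
\kk[\SigmaB] \arrow{r}{\hatphi^{*}} & \kk[\SigmaA]
\end{tikzcd}
\]
only commutes up to an algebra homotopy~$h^{*}$. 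This is already a problem for the \emph{un}symmetrized isomorphism~$\Psi_{\Sigma}$; indeed \Cref{thm:intro:nat} shows that $\Psi_{\Sigma}$ is natural only with respect to a deformed map~$\hatTor{\phi}$, not $\Tor(\phi)$. So your claim that the asymmetric map is natural and that only the symmetrization step needs checking is backwards.

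What the paper actually does is compute the obstruction explicitly. The algebra homotopy~$h^{*}$ interacts with $\cupone$-products in a controlled way (\Cref{thm:hdual-cup1-DJ}), and this produces correction terms~$\hatq_{ij}\in\kk[\SigmaA]$ that enter the deformed map~$\hatKtw$. The point of the anti-symmetric representative~$\tilde{c}=\tfrac{1}{2}([1]-[-1])$ is not that it ``averages over an action'' in any group-theoretic sense, but that the chains~$\tilde{b}_{v'}$ built from it decompose into pairs of $2$-simplices related by the group inversion~$\iota$, and the formula of \Cref{thm:hatq} then shows each~$\hatq_{ij}$ vanishes by pairwise cancellation (the term for~$s$ cancels against the term for~$s+\ell(v')$). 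Combined with the vanishing of the product-twisting terms~$\tilde{q}_{ij}$ from~\cite[Lemma~8.1]{Franz:torprod}, this collapses $\hatKtw$ to the canonical map and gives $\hatTor{\phi}=\Tor(\phi)$. Your ``expect the discrepancy to be a cochain boundary'' is too weak: the correction terms vanish on the nose, and establishing this requires the explicit $\cupone$-analysis that your outline omits.
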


We now turn to the case where \(2\) may not be invertible in~\(\kk\).
We consider the complex
\begin{equation}
  \label{eq:def-Kl}
  \Kl_{\Sigma} = H^{*}(\LB;\kk) \otimes \kk[\Sigma]
\end{equation}
with differential
\begin{equation}
  d = - \sum_{v\in V} \iota(\xv{v}) \otimes t_{v}
\end{equation}
where \(v\) runs through the set~\(V\) of rays in~\(\Sigma\).
Here \(t_{v}\) is the generator of~\(\kk[\Sigma]\) corresponding to~\(v\)
and \(\iota(\xv{v})\) the contraction with its minimal lattice representative~\(\xv{v}\).
The cohomology of~\(\Kl_{\Sigma}\) is the torsion product from~\eqref{eq:iso-cohom-Tor-additive}.

Choose a basis~\(x_{1}\),~\dots,~\(x_{n}\) for~\(N=H_{1}(\LB;\Z)\).
We turn \(\Kl_{\Sigma}\) into a differential graded \(\kk[\Sigma]\)-algebra by defining
\begin{equation}
  \label{eq:twisted-prod}
  \alpha * \beta = \alpha\,\beta + \sum_{i\ge j} \alpha(x_{i})\,\beta(x_{j})\,q_{ij}
\end{equation}
for~\(\alpha\),~\(\beta\in H^{1}(\LB;\kk)\subset\Kl_{\Sigma}\), where
\begin{equation}
  \label{eq:def-qij-general}
  q_{ii} = \sum_{v\in V} \frac{\xvi{v}{i}(\xvi{v}{i}-1)}{2}\,t_{v},
  \qquad
  q_{ij} = \sum_{v\in V} \xvi{v}{i}\,\xvi{v}{j}\,t_{v}
  \quad\text{if~\(i>j\).}
\end{equation}
Here \(\xvi{v}{1}\),~\dots,~\(\xvi{v}{n}\in\Z\) are the coordinates of~\(\xv{v}\in N\) with respect to the basis~\((x_{i})\),
and the fraction on the left-hand side is computed in~\(\Z\).

\begin{theorem}[Franz]
  \label{thm:iso-mult-general}
  For any principal ideal domain~\(\kk\) there is an isomorphism
  \begin{equation*}
    \Psi_{\Sigma}\colon H^{*}(\XXX_{\Sigma};\kk) \to \Tor_{R}(\kk,\kk[\Sigma])
  \end{equation*}
  that is multiplicative with respect to the twisted \(*\)-product on~\(\Tor\).
\end{theorem}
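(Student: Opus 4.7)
The plan is to construct \(\Psi_{\Sigma}\) at the cochain level by assembling a zigzag of multiplicative quasi-isomorphisms relating a cochain model of~\(\XXX_{\Sigma}\) to the complex~\(\Kl_{\Sigma}\) endowed with the twisted \(*\)-product. Additively the isomorphism already exists by~\eqref{eq:iso-cohom-Tor-additive}, so the real content is the identification of the multiplicative structure, and in particular the emergence of the deformation terms~\(q_{ij}\).

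First I would set up the Eilenberg--Moore picture: the classifying map \(\XXX_{\Sigma}\to B\LL\) realizes \(\XXX_{\Sigma}\) (up to the partial-quotient construction) as the homotopy fibre of a map with simply connected base, so that an \(R\)-module resolution of~\(\kk\) computes \(H^{*}(\XXX_{\Sigma};\kk)\). I would then pass to the DGA level via the two-sided bar construction~\(B(\kk,R,\kk[\Sigma])\), which admits a multiplicative (shc-algebra) quasi-isomorphism to a cochain model of~\(\XXX_{\Sigma}\). The Koszul resolution \(R\otimes H^{*}(\LL;\kk)\xrightarrow{\sim}\kk\) then identifies \(\Kl_{\Sigma}\) with \(\kk\otimes_{R}(R\otimes H^{*}(\LL;\kk))\otimes\kk[\Sigma]\) as a chain complex and yields a comparison map \(\Kl_{\Sigma}\to B(\kk,R,\kk[\Sigma])\) computing the torsion product.

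The crux is that this comparison map is \emph{not} strictly multiplicative for the naive product on~\(\Kl_{\Sigma}\); it only is up to a chain homotopy measured by Steenrod's cup-\(1\) product on cochains of~\(B\LL\). The explicit form of \(q_{ii}\) and~\(q_{ij}\) in~\eqref{eq:def-qij-general} is exactly what one obtains by transporting \(\xv{i}\cupone\xv{j}\) through the Koszul resolution: the binomial coefficient \(\xvi{v}{i}(\xvi{v}{i}-1)/2\) is the classical value of \(\xvi{v}{i}\cupone\xvi{v}{i}\) on \(B\Z\), and the mixed terms \(\xvi{v}{i}\xvi{v}{j}\) come from \(\xvi{v}{i}\cupone\xvi{v}{j}\). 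The asymmetric ordering condition \(i\ge j\) reflects the non-symmetry of \(\cupone\). I would then \emph{define} the \(*\)-product on~\(\Kl_{\Sigma}\) by~\eqref{eq:twisted-prod} precisely so that the comparison map becomes strictly multiplicative, upgrading the chain-level quasi-isomorphism to a DGA map.

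The main obstacle is verifying closure of this definition: one must show (a)~\(d\) is still a derivation for~\(*\), (b)~the \(*\)-product is associative, and (c)~no higher homotopies intervene, \ie\ all cup-\(i\) products for \(i\ge2\) can be absorbed or vanish after passage to~\(\Kl_{\Sigma}\). Points~(a) and~(b) reduce to combinatorial identities in the Stanley--Reisner ring involving the coordinates \(\xvi{v}{i}\), where Stanley--Reisner relations kill many potential obstruction monomials. Point~(c) is where the torus structure is essential: cup-\(i\) products on the exterior algebra \(H^{*}(\LL;\kk)\) vanish for \(i\ge 2\) on generators, so only the binary correction recorded in~\eqref{eq:twisted-prod} survives. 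Once these are in place, the induced map on cohomology is the desired multiplicative isomorphism~\(\Psi_{\Sigma}\).
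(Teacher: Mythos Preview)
This theorem is quoted from~\cite{Franz:torprod}, not proved in the present paper; Section~\ref{sec:main} only \emph{reviews} the construction of~\(\Psi_{\Sigma}\). Against that review your outline has the right shape---a zigzag of quasi-isomorphisms through a one-sided bar construction, with the twist arising from \(\cupone\)-phenomena---but two technical points are off.

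First, the bar construction in the zigzag~\eqref{eq:quisos-X-Kl} is \(\BB(\kk,C^{*}(BL),\kk[\Sigma])\), not \(\BB(\kk,R,\kk[\Sigma])\). The distinction is essential: the product on this bar construction comes from the homotopy Gerstenhaber structure on the \emph{cochain} algebra~\(C^{*}(BL)\), and that is where \(\cupone\) lives. Over the graded-commutative algebra~\(R=H^{*}(BL)\) all operations~\(E_{k}\) vanish, the product~\eqref{eq:prod-BB-onesided-comm} is the naive one, and there is no mechanism to produce the~\(q_{ij}\). The comparison map~\(\Phi_{\Sigma}\) of~\eqref{eq:quiso-Kl-baronesided} uses the hga product~\(\circ\) explicitly; it is not the Koszul-resolution comparison map you describe.

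Second, introducing the \(*\)-product does \emph{not} make \(\Phi_{\Sigma}\) strictly multiplicative: it remains multiplicative only up to the explicit \(\kk[\Sigma]\)-bilinear homotopy~\(H\) of~\eqref{eq:H-Kl-Kl-B}--\eqref{eq:homotopy-twisted-prod}, and that homotopy is an irreducible part of the argument. Your point~(c) is also aimed at the wrong object: the absence of higher corrections is not a statement about \(\cup_{i}\) on the exterior algebra~\(H^{*}(L)\), but about the formality map~\(f_{\Sigma,c}^{*}\colon C^{*}(\DJ_{\Sigma})\to\kk[\Sigma]\) annihilating iterated \(\cupone\)-products of the cocycle representatives~\(\gamma_{i}\in C^{2}(BL)\) (compare \Cref{thm:iterated-cup1} here and the corresponding vanishing results in~\cite{Franz:gersten,Franz:torprod}). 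The elements~\(q_{ij}\) are \emph{defined} as~\(f_{\Sigma,c}^{*}(\gamma_{ij})\) for specific cochains~\(\gamma_{ij}\), see~\eqref{eq:def-qij}, and the formulas~\eqref{eq:def-qij-general} are then obtained by evaluating this for the canonical loop representative.
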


Besides the map~\eqref{eq:map-SR-intro},
a morphism of smooth toric varieties~\(\phi\colon\XXX_{\SigmaA}\to\XXX_{\SigmaB}\)
also induces a map~\(\phi^{*}\colon H^{*}(\LB;\kk)\to H^{*}(\LA;\kk)\).
We introduce the chain map
\begin{align}
  \label{eq:def-Ktw}
  \Ktw\colon \Kl_{\SigmaB} = H^{*}(\LB;\kk) \otimes \kk[\SigmaB] &\to \Kl_{\SigmaA} = H^{*}(\LA;\kk) \otimes \kk[\SigmaA], \\
  \notag
  \alpha_{i_{1}}\cdots\alpha_{i_{k}}\otimes f &\mapsto \phi^{*}(\alpha_{i_{1}})*\dots*\phi^{*}(\alpha_{i_{k}}) * \hatphi^{*}(f)
\end{align}
for~\(i_{1}<\dots<i_{k}\), where \(\alpha_{1}\),~\dots,~\(\alpha_{n}\in H^{1}(\LB;\Z)\) is the basis dual to the one for~\(H_{1}(\LB;\Z)\).
Based on~\(\Ktw\) we define the chain map
\begin{align}
  \label{eq:intro:def-hatKtw}
  \hatKtw\colon \Kl_{\SigmaB} &\to \Kl_{\SigmaA}, \\ 
  \notag
  \alpha\otimes f &\mapsto \Ktw(\alpha\otimes f)
  + \sum_{i>j} \Ktw\bigl(\iota(x_{i})\,\iota(x_{j})\,\alpha \otimes f\bigr)\,\hatq_{ij}
\end{align}
where the elements~\(\hatq_{ij}\in\Z[\SigmaA]\) can be written explicitly in terms of the map of fans~\((\NA,\SigmaA)\to(\NB,\SigmaB)\), see \Cref{thm:hatq}.
We write the map induced by~\(\hatKtw\) in cohomology as~\(\hatTor{\phi}\).
}

\begin{theorem}
  \label{thm:intro:nat}
  Let \(\phi\colon\XXX_{\SigmaA}\to\XXX_{\SigmaB}\) be a morphism of smooth toric varieties.
  The diagram
  \begin{equation*}
    \begin{tikzcd}
      H^{*}(\XXX_{\SigmaB};\kk) \arrow{d}[left]{\Psi_{\SigmaB}}[right]{\cong} \arrow{r}{\phi^{*}} & H^{*}(\XXX_{\SigmaA};\kk) \arrow{d}[right]{\Psi_{\SigmaA}}[left]{\cong} \\
      \Tor_{\RB}(\kk,\kk[\SigmaB]) \arrow{r}{\hatTor{\phi}} & \Tor_{\RA}(\kk,\kk[\SigmaA])
    \end{tikzcd}
  \end{equation*}
  commutes for any principal ideal domain~\(\kk\).
\end{theorem}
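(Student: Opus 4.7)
The plan is to prove the theorem at the chain level: the isomorphism $\Psi_\Sigma$ of \Cref{thm:iso-mult-general} is induced by a specific chain-level quasi-isomorphism between $\Kl_\Sigma$ (equipped with the twisted $*$-product) and a cochain model for~$\XXX_\Sigma$. I would first reduce the problem from toric varieties to partial quotients of moment-angle complexes, where the cochain models are combinatorial and the morphism~$\phi$ is represented by an equivariant (simplicial or cubical) map. This simultaneously controls~$\phi^{*}$, the induced map $\hatphi^{*}\colon\kk[\SigmaB]\to\kk[\SigmaA]$ on Stanley--Reisner algebras, and the map $H^{*}(\LB;\kk)\to H^{*}(\LA;\kk)$ on torus cohomology.

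Second, I would revisit the chain-level origin of the $*$-product. The terms $q_{ij}$ in~\eqref{eq:def-qij-general} measure the failure of the Koszul-side product to strictly agree with the cochain-level cup product, and are governed by a $\cup_{1}$-type higher operation on the classifying space~$BL$. Transporting $\phi^{*}$ through the quasi-isomorphism produces a chain map $\Kl_{\SigmaB}\to\Kl_{\SigmaA}$, well defined up to chain homotopy, whose ``leading part'' is exactly the map~$\Ktw$ of~\eqref{eq:def-Ktw}: on generators it is just $\phi^{*}\otimes\hatphi^{*}$, with the $*$-product inserted so as to land in the twisted algebra $\Kl_{\SigmaA}$.

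Third, I would identify the discrepancy between $\Ktw$ and the transported~$\phi^{*}$. Since $\phi^{*}\colon H^{*}(\LB;\kk)\to H^{*}(\LA;\kk)$ is an algebra map for the wedge product while the two $*$-products are twisted differently, $\Ktw$ fails to be chain-homotopic to the transported $\phi^{*}$ by a computable quadratic error. A direct calculation, using the explicit form of the $q_{ij}$ on both sides together with the chain-level $\cup_{1}$-operation, identifies this error in terms of elements $\hatq_{ij}\in\kk[\SigmaA]$ expressible through the map of fans $(\NA,\SigmaA)\to(\NB,\SigmaB)$. The correction $\sum_{i>j}\Ktw\bigl(\iota(x_{i})\,\iota(x_{j})\,(\cdot)\bigr)\,\hatq_{ij}$ added in~\eqref{eq:intro:def-hatKtw} is precisely what is needed to cancel this error, so $\hatKtw$ is chain-homotopic to the transported $\phi^{*}$, and the diagram commutes in cohomology.

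The main obstacle is the explicit combinatorial and sign-bookkeeping calculation that identifies the quadratic error with the prescribed~$\hatq_{ij}$. This mirrors the original derivation of the $q_{ij}$ from the cup-one product on cochains of~$BL$, but must be performed in the presence of the twist introduced by~$\phi$ and carried consistently through the shuffle/Alexander--Whitney machinery used to transport~$\phi^{*}$. When $2$ is invertible the corrections $\hatq_{ij}$ can be absorbed into a different choice of chain homotopy, recovering~\Cref{thm:intro:nat-2}; for general~$\kk$ the quadratic factor $x(x-1)/2\in\Z$ appearing in~$q_{ii}$ is the genuine source of the deformation, and handling it accurately is the crux of the argument.
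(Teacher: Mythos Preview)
Your overall strategy matches the paper's: reduce to partial quotients, track the morphism through the chain-level zigzag defining $\Psi_\Sigma$, and identify the quadratic correction via $\cup_1$-products. However, you mislocate the source of the correction terms~$\hatq_{ij}$.

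You attribute the discrepancy between $\Ktw$ and the transported~$\phi^{*}$ to the fact that ``the two $*$-products are twisted differently.'' But $\Ktw$ is already defined using the $*$-product on~$\Kl_{\SigmaA}$, so the difference in twisting on the two sides is absorbed. The actual source of~$\hatq_{ij}$ is one step earlier in the zigzag: the formality quasi-isomorphisms $f_{\Sigma,c}^{*}\colon C^{*}(\DJ_{\Sigma})\to\kk[\Sigma]$ are \emph{not} strictly natural in~$\phi$. The square
\[
\begin{tikzcd}
C^{*}(\DJ_{\SigmaB}) \arrow{r}{\hatphi^{*}} \arrow{d}[left]{f_{\SigmaB,c}^{*}} & C^{*}(\DJ_{\SigmaA}) \arrow{d}{f_{\SigmaA,c'}^{*}} \\
\kk[\SigmaB] \arrow{r}{\hatphi^{*}} & \kk[\SigmaA]
\end{tikzcd}
\]
commutes only up to an explicit \emph{algebra homotopy}~$h^{*}$, built from chains~$b_{v'}\in C_{2}(T)$ witnessing that two different cycle representatives for $(\hatphi)_{*}(x_{v'})$ are homologous. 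Feeding this algebra homotopy through the one-sided bar construction produces a chain map~$\Theta_{h^{*}}$ replacing the na\"ive induced map, and the $\hatq_{ij}$ are then \emph{defined} as $-h^{*}\rho^{*}(\gamma_{j}\cupone\gamma_{i})$. The key technical work is showing that $h^{*}$ vanishes on iterated $\cup_1$-products of length~$\geq 3$ and on even-degree cochains, so only the single quadratic correction survives. Your proposal does not mention this algebra homotopy at all, and without it there is no concrete mechanism producing the $\hatq_{ij}$; ``comparing the $q_{ij}$ on both sides'' does not yield them.
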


The paper is organized as follows: In \Cref{sec:alg}, we review some differential-algebraic background
and prove several auxiliary results related to bar constructions.
Motivated by morphisms of toric varieties, we introduce in \Cref{sec:morph}
the class of morphisms that we are focusing on,
namely toric morphisms of simplicial posets.
They induce equivariant maps between the associated partial quotients.
In \Cref{sec:BT,sec:DJ} we revisit and extend the formality result for classifying spaces of tori
and for Davis--Janusz\-kie\-wicz spaces that was established in~\cite{Franz:gersten} and
laid the ground for the results from~\cite{Franz:torprod} recalled above.
These formality maps depend on the choice of certain representatives. We show that the maps
for different choices are related via algebra homotopies,
and we investigate how these homotopies interact with \(\cupone\)-products.
In \Cref{sec:main} we recall the setup for the proofs of \Cref{thm:iso-mult-2,thm:iso-mult-general} and
prove our technical main result, \Cref{thm:main}.
We deduce \Cref{thm:intro:nat-2,thm:intro:nat} as special cases of it in \Cref{sec:nat-general,sec:nat-2}.
As further applications we study the map induced in cohomology by
the projection~\(\ZZZ_{\Sigma}\to\XXX_{\Sigma}\) (\Cref{sec:macs}), and
we describe an automorphism of~\(\Kl_{\Sigma}\) that in cohomology intertwines the twisted with
the canonical product (\Cref{sec:compare}).
We illustrate our results with numerous examples.

\begin{acknowledgements}
  We thank Taras Panov for a comment that inspired \Cref{rem:I-Sigma-large}.
\end{acknowledgements}

\section{Algebras, coalgebras and bar constructions}
\label{sec:alg}

Throughout this paper, we work over a principal ideal domain~\(\kk\).
We refer to~\cite[Sec.~2]{Franz:gersten} for our sign conventions as well as
for the definitions of coaugmented differential graded coalgebras (dgcs),
augmented differential graded algebras (dgas) and bar constructions of the latter.
We write~\(\BBone=\iota(1)\in\BB A\) for the unit of the bar construction of an augmented dga~\(A\)
with the canonical coaugmentation~\(\iota\colon\kk\to\BB A\) and \(t_{A}\colon \BB A\to A\)
for the canonical twisting cochain.

We recall from~\cite[\S 1.11]{Munkholm:1974} the definitions of algebra and coalgebra homotopies.
Let \(f\),~\(g\colon A\to B\) be morphisms of augmented dgas.
An \newterm{algebra homotopy} from~\(f\) to~\(g\) is a map~\(h\colon A\to B\) such that
\begin{equation}
  \label{eq:def-alg-h}
  d(h) = g-f,
  \qquad
  h\,\mu_{A} = \mu_{B}\,(f\otimes h + h\otimes g)
  \qquad\text{and}\qquad
  \epsilon_{B}\,h = 0,
\end{equation}
where \(\mu_{A}\) and~\(\mu_{B}\) denote the multiplication maps of~\(A\) and~\(B\), respectively.\footnote{%
  Note that \(-h\) is an algebra homotopy in the sense of~\cite{Munkholm:1974}.
  The conditions on the (co)aug\-men\-ta\-tions in~\eqref{eq:def-alg-h} and~\eqref{eq:def-coalg-h} are missing in~\cite{Munkholm:1974}.
  Without them the bijections with twisting cochain homotopies that are stated in~\cite{Munkholm:1974} (and which we will not use) may fail.}
Similarly, let \(f\),~\(g\colon C\to D\) be morphisms of coaugmented dgcs.
A \newterm{coalgebra homotopy} from~\(f\) to~\(g\) is a map~\(h\colon C\to D\) such that
\begin{equation}
  \label{eq:def-coalg-h}
  d(h) = g-f,
  \qquad
  \Delta_{D}\,h = (f\otimes h + h\otimes g)\,\Delta_{C}
  \qquad\text{and}\qquad
  h\,\iota_{C} = 0.
\end{equation}

Remember that the dual of a coaugmented dgc is an augmented dga.
Moreover, if \(f\colon C\to D\) is a morphism of augmented dgcs,
then its transpose~\(f^{*}\colon D^{*}\to C^{*}\) is a morphism of augmented dgas.
(See~\cite[eq.~(2.4)]{Franz:gersten} for our sign convention regarding the transpose.)

\begin{lemma}
  \label{thm:dual-coalg-h}
  Let \(f\),~\(g\colon C\to D\) be morphisms of coaugmented dgcs.
  If \(h\colon C\to D\) is a coalgebra homotopy from~\(f\) to~\(g\),
  then its transpose~
  \( 
    h^{*}\colon D^{*}\to C^{*}
  \) 
  is an algebra homotopy from~\(f^{*}\) to~\(g^{*}\).
\end{lemma}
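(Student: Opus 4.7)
The plan is to verify each of the three defining identities of an algebra homotopy for~\(h^{*}\) by dualizing the corresponding identity for~\(h\). Under the conventions fixed in the paper, the dual of a coaugmented dgc~\((C,\Delta_{C},\iota_{C})\) is an augmented dga with multiplication~\(\mu_{C^{*}}=\Delta_{C}^{*}\) and augmentation~\(\epsilon_{C^{*}}=\iota_{C}^{*}\), and transposition is a (contravariant) strong monoidal functor on chain complexes, the Koszul signs in \((\alpha\otimes\beta)^{*}\) being absorbed by the sign convention of~\cite[eq.~(2.4)]{Franz:gersten}.

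First I would handle the two easy conditions. Since transposition is compatible with the differential on Hom complexes, dualizing \(d(h)=g-f\) immediately gives \(d(h^{*})=g^{*}-f^{*}\). Similarly, dualizing the coaugmentation-annihilation condition \(h\,\iota_{C}=0\) yields \(\epsilon_{C^{*}}\,h^{*}=\iota_{C}^{*}\,h^{*}=0\).

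The substantive step is the multiplicative identity. Transposing
\[
\Delta_{D}\,h=(f\otimes h+h\otimes g)\,\Delta_{C}
\]
and using \(\Delta_{D}^{*}=\mu_{D^{*}}\) and \(\Delta_{C}^{*}=\mu_{C^{*}}\) we obtain
\[
h^{*}\,\mu_{D^{*}}=\mu_{C^{*}}\,(f\otimes h+h\otimes g)^{*}=\mu_{C^{*}}\,(f^{*}\otimes h^{*}+h^{*}\otimes g^{*}),
\]
which is the required identity.

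The main point to check, and essentially the only obstacle, is the sign bookkeeping in the last equality: one must verify that the Koszul signs produced when expanding \((f\otimes h)^{*}\) and \((h\otimes g)^{*}\) agree with the signs with which \(f\otimes h\) and \(h\otimes g\) appear in the original formula. This is precisely what the sign convention of~\cite[eq.~(2.4)]{Franz:gersten} is engineered to guarantee, so the verification reduces to applying that convention degree by degree; no further idea is required.
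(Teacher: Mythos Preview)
Your proposal is correct and follows essentially the same route as the paper: both reduce the claim to the standard compatibility of the transpose with differentials, tensor products, and composition, as recorded in~\cite[eqs.~(2.5),~(2.7)]{Franz:gersten}. You spell out the three conditions individually, whereas the paper simply invokes those properties in one line, but the content is the same.
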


\begin{proof}
  This follows directly from the properties of the transpose as given in~\cite[eqs.~(2.5),~(2.7)]{Franz:gersten}.
\end{proof}

We consider the following diagram of augmented dgas.
\begin{equation}
  \label{eq:diag-A-B1-B2}
  \begin{tikzcd}[column sep=large]
    A \arrow{d} \arrow{r}{g} & A' \arrow{d} \\
    B_{1} \arrow{d}[left]{f} \arrow{r}{g_{1}} \arrow[dashed]{rd}{h} & B_{1}' \arrow{d}{f'} \\
    B_{2} \arrow{r}[below]{g_{2}} & B_{2}'
  \end{tikzcd}
\end{equation}
We assume that the top square commutes on the nose
and the bottom square up to the algebra homotopy~\(h\colon B_{1}\to B_{2}'\)
from~\(g_{2}\,f\) to~\(f'\,g_{1}\). We consider the map
\begin{align}
  \label{eq:def-Theta-h}
  \Theta_{h}\colon \BB(\kk,A,B_{2}) &\to \BB(\kk,A',B_{2}'), \\
  \notag [a_{1}|\dots|a_{k}]\otimes b &\mapsto
  \begin{cases}
    \BBone \otimes g_{2}(b) & \text{if \(k=0\),} \\
    \bigl[g(a_{1})|\dots|g(a_{k})\bigr]\otimes g_{2}(b) \\
    \quad + \bigl[g(a_{1})|\dots|g(a_{k-1})\bigr]\otimes h(a_{k})\,g_{2}(b) & \text{if \(k>0\)}
  \end{cases}
\end{align}
between one-sided bar constructions.
Using the isomorphisms of graded modules~\(\BB(\kk,A,B_{2})=\BB A\otimes B_{2}\)
and~\(\BB(\kk,A',B_{2}')=\BB A'\otimes B_{2}'\), the map~\(\Theta_{h}\) can also be written as
\begin{equation}
  \label{eq:def-Theta-h-bis}
  \Theta_{h} = \BB g \otimes g_{2} + (1\otimes \mu_{B_{2}'})(1\otimes h\,t_{A}\otimes g_{2})(\Delta_{\BB A} \otimes 1)
\end{equation}
where \(t_{A}\colon\BB A\to A\) is the canonical twisting cochain.
Note that we are not indicating the map~\(A\to B_{1}\) when writing \(h\,t_{A}\) above.

\begin{lemma}
  \label{thm:homotopy-bar}
  The map~\(\Theta_{h}\) is a chain map. Moreover, the diagram of complexes
  \begin{equation*}
    \begin{tikzcd}[column sep=huge]
      \BB(\kk,A,B_{1}) \arrow{d}[left]{\BB(1,1,f)} \arrow{r}{\BB(1,g,g_{1})} & \BB(\kk,A',B_{1}') \arrow{d}{\BB(1,1,f')} \\
      \BB(\kk,A,B_{2}) \arrow{r}{\Theta_{h}} & \BB(\kk,A',B_{2}')
    \end{tikzcd}
  \end{equation*}
  commutes up to homotopy.
\end{lemma}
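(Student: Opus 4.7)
The plan is to exploit the alternate expression \eqref{eq:def-Theta-h-bis} for \(\Theta_{h}\), writing it as \(\Theta_{h} = \BB g \otimes g_{2} + \Xi_{h}\), where \(\Xi_{h}\) denotes the correction term built from the twisting cochain \(t\colon \BB A\to A\), the algebra homotopy \(h\), and the coproduct \(\Delta_{\BB A}\). To verify that \(\Theta_{h}\) is a chain map, I would compute \(d\circ\Theta_{h}-\Theta_{h}\circ d\) on each summand separately. The piece \(\BB g\otimes g_{2}\) would be a chain map outright if the lower square in \eqref{eq:diag-A-B1-B2} commuted on the nose; the failure is concentrated in the twisting part of the bar differential and is measured exactly by \(f'g_{1}-g_{2}f = d(h)\). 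The correction \(\Xi_{h}\) is designed to absorb precisely this discrepancy: combining the defining relation \(dt = t\cupone t\) for the canonical twisting cochain, the algebra-homotopy identity \(h\,\mu_{B_{1}} = \mu_{B_{2}'}\,(g_{2}f\otimes h + h\otimes f'g_{1})\), and the coassociativity of \(\Delta_{\BB A}\), the remaining terms pair up and cancel.

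For the homotopy assertion, I propose the formula
\begin{equation*}
  H\bigl([a_{1}|\dots|a_{k}]\otimes b\bigr) = (-1)^{\sum_{i}(|a_{i}|-1)}\,\bigl[g(a_{1})|\dots|g(a_{k})\bigr]\otimes h(b),
\end{equation*}
which intertwines \(\BB g\) on the bar side with \(h\) on the coefficient side. A direct check that \(dH+Hd\) equals \(\Theta_{h}\circ\BB(1,1,f)-\BB(1,1,f')\circ\BB(1,g,g_{1})\) splits into three parts. The internal differential acting on \(h(b)\) produces \((f'g_{1}-g_{2}f)(b)\), which yields both the \(k=0\) case and the leading term of the \(k>0\) case, together with \(-h(db)\) that cancels against the matching contribution from \(Hd\). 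The internal bar differential of \([g(a_{1})|\dots|g(a_{k})]\) is functorial in \(g\) and cancels symmetrically between \(dH\) and \(Hd\). Finally, the twisting differential, which strips off the last entry \(a_{k}\) and acts it on the coefficient, produces after applying \(h\) a splitting via the algebra-homotopy multiplicativity: one half is the \(h(a_{k})\,g_{2}f(b)\) contribution matching the extra summand in the definition of \(\Theta_{h}\), while the other half involving \(g_{2}f(a_{k})\) acting on \(h(b)\) cancels against the corresponding twisting term from \(dH\).

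The hard part will be a meticulous sign bookkeeping: the shifted bar grading, the Koszul signs implicit in the tensor-of-maps form of the algebra-homotopy relation \eqref{eq:def-alg-h}, and the signs in the twisting portion of the bar differential all interact, and lining them up correctly requires care. Once the signs are settled, the only nontrivial algebraic inputs are the twisting-cochain equation \(dt = t\cupone t\), the algebra-homotopy multiplicativity, and the fact that \(g\) is a dga morphism, all of which are provided by the hypotheses of the lemma and the material recalled at the start of \Cref{sec:alg}.
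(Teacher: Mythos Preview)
Your approach is essentially the same as the paper's: both verify the chain-map property of~\(\Theta_{h}\) by direct computation and use the homotopy \(H=\BB g\otimes h\), i.e., \([a_{1}|\dots|a_{k}]\otimes b\mapsto (-1)^{\deg{\aa}}[g(a_{1})|\dots|g(a_{k})]\otimes h(b)\) (your sign \((-1)^{\sum_{i}(|a_{i}|-1)}\) is exactly \((-1)^{\deg{\aa}}\) in bar degree). The paper simply states that one ``directly verifies'' both claims, whereas you spell out the mechanism of the cancellations; the content is the same.
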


\begin{proof}
  One directly verifies the first claim and also that
  \begin{align}
    H=\BB g\otimes h\colon \BB(\kk,A,B_{1}) &\to \BB(\kk,A',B_{2}'), \\
    \notag {\underbrace{[a_{1}|\dots|a_{k}]}_{\aa}}\otimes b &\mapsto
    (-1)^{\deg{\aa}}\,\bigl[g(a_{1})|\dots|g(a_{k})\bigr]\otimes h(b)
  \end{align}
  is a homotopy from~\(\BB(1,1,f')\,\BB(1,g,g_{1})\) to~\(\Theta_{h}\,\BB(1,1,f)\).
\end{proof}

Recall that a homotopy Gerstenhaber algebra (hga) is a dga~\(A\) equipped with operations
\begin{equation}
  E_{k}\colon A\otimes A^{k} \to A
\end{equation}
for~\(k\ge 1\) that allow to define a multiplication on~\(\BB A\) turning the bar construction into a dg~bialgebra,
compare~\cite[Sec.~4]{Franz:gersten} or~\cite[Sec.~2]{Franz:torprod}.
We write the product of two elements~\(\aa\),~\(\aa'\in~\BB A\) as~\(\aa\circ\aa'\).
Cochain algebras of simplicial sets and topological spaces are naturally hgas,
and any (graded) commutative dga is an hga with~\(E_{k}=0\) for all~\(k\ge1\).
Also recall that
\begin{equation}
  a\cupone b=-E_{1}(a;b)
\end{equation}
is a \(\cupone\)-product for~\(A\) satisfying the Hirsch formula.
This means that for all~\(a\),~\(b\),~\(c\in A\) one has
\begin{gather}
  d(a\cupone b)+ da\cupone b+(-1)^{\deg{a}}\,a\cupone db = ab - (-1)^{\deg{a}\deg{b}}\,ba, \\
  ab \cupone c = (-1)^{\deg{a}}\,a(b\cupone c) + (-1)^{\deg{b}\deg{c}}(a\cupone c)\,b.
\end{gather}

We will need the following fact.

\begin{lemma}
  \label{thm:iterated-cup1}
  For any~\(k\ge1\) and~\(a_{1}\),~\dots,~\(a_{k}\in A\) of even positive degrees one has
  \begin{equation*}
    t_{A}\bigl([a_{1}]\circ\dots\circ[a_{k}]\bigr) =
    (-1)^{k-1}\,\bigl(((a_{1}\cupone a_{2})\cupone a_{3})\cupone\cdots\bigr)\cupone a_{k}.
  \end{equation*}
\end{lemma}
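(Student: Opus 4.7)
The plan is to prove the statement by induction on~\(k\), exploiting the recursive structure of the hga bar product together with the observation that the length-one part of a product of two bar elements is controlled by the \(E\)-twisting cochain.

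\emph{Base case \(k=1\).} By the defining property of the canonical twisting cochain, \(t_A([a_1]) = a_1\), which matches the right-hand side \((-1)^{0}\,a_1\).

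\emph{Inductive step.} Assume the formula for~\(k-1\). Set
\(
  \aa = [a_1] \circ \dots \circ [a_{k-1}]
\)
and
\(
  c = (((a_{1} \cupone a_{2}) \cupone a_{3}) \cupone \cdots) \cupone a_{k-1},
\)
so that by induction \(t_A(\aa) = (-1)^{k-2}\,c\). By associativity of the bar product, \([a_{1}] \circ \cdots \circ [a_{k}] = \aa \circ [a_{k}]\).  The heart of the proof is the identity
\begin{equation*}
  t_A\bigl(\aa' \circ [b]\bigr) = -\,t_A(\aa')\cupone b
\end{equation*}
valid for any \(\aa' \in \BB A\) with no length-zero component and any \(b \in A\). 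Given this identity and the inductive hypothesis, I obtain
\begin{equation*}
  t_A([a_{1}] \circ \cdots \circ [a_{k}])
  = -\,t_A(\aa) \cupone a_{k}
  = -(-1)^{k-2}\,c \cupone a_{k}
  = (-1)^{k-1}\,(c \cupone a_{k}),
\end{equation*}
which is the desired right-hand side.

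\emph{Justification of the key identity.} The length-one projection of a bar product \(\aa_L \circ \aa_R\) is exactly the image of \(\aa_L \otimes \aa_R\) under the \(E\)-twisting cochain \(\BB A \otimes \BB A \to A\) that is built from the hga operations \(E_k\) and that defines the multiplication on~\(\BB A\) (see~\cite[Sec.~4]{Franz:gersten} or~\cite[Sec.~2]{Franz:torprod}). In a Hirsch algebra this twisting cochain is supported on pairs where the left factor has bar length at most one, with
\(
  E([a] \otimes [b_{1}|\dots|b_{n}]) = E_{n}(a;b_{1},\dots,b_{n}).
\)
Specialising to \(\aa_R = [b]\) and summing over the length-one constituents of \(\aa'\), the only surviving contribution is \(E_{1}(t_{A}(\aa');b) = -t_{A}(\aa') \cupone b\); length-zero components of \(\aa'\) do not occur in our \(\aa\) since it is a product of positive-length elements in the augmentation ideal of~\(\BB A\).

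\emph{Main obstacle.} The substantive point is the length-one vanishing claim \(E(\aa_L \otimes [b]) = 0\) when \(\aa_L\) has bar length at least two, which has to be read off from the explicit definition of the hga bar product used in this paper. Once that is granted, the rest is a bookkeeping exercise; the even-positive-degree assumption enters only to keep the signs clean (shuffle signs \((-1)^{|a_i||a_j|}\) in intermediate factors become trivial, and the iterated \(\cupone\) expression on the right-hand side is formed without parity-dependent sign corrections), so that the ``\(+\)'' in \(E_{1}(a;b) = -a \cupone b\) propagates uniformly through the induction and produces the single sign \((-1)^{k-1}\).
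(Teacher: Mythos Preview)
Your approach is the same as the paper's: both argue by induction on~\(k\), reducing to the formula for the length-one component of a bar product and peeling off the rightmost factor~\([a_k]\). The paper's proof is extremely terse---it records only the \(k=2\) identity \(t_A([a]\circ[b])=(-1)^{|a|}E_1(a;b)=(-1)^{|a|+1}a\cupone b\) (citing \cite[eqs.~(6.7)~\&~(6.10)]{Franz:homog}) and then says the rest ``follows inductively by evaluating from the left to the right''---so your write-up is a reasonable expansion of exactly that argument.

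One caution on signs. Your key identity is stated as \(t_A(\aa'\circ[b])=-\,t_A(\aa')\cupone b\), i.e.\ with a constant sign~\(-1\), and you justify it by writing \(\mathbf{E}([a]\otimes[b_1|\dots|b_n])=E_n(a;b_1,\dots,b_n)\) with no sign. But the paper's \(k=2\) formula carries the degree-dependent sign \((-1)^{|a|}\), and in the inductive step \(t_A(\aa')\) has degree \(\sum_{i<k}|a_i|-(k-2)\equiv k\pmod 2\), which is odd for odd~\(k\). So your assertion that ``the even-positive-degree assumption \dots\ keeps the signs clean'' because the suspension signs become trivial is not quite right: the intermediate element \(t_A(\aa')\) need not have even degree, and the sign coming from the twisting cochain must be tracked through the induction rather than set to~\(-1\) uniformly. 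The inductive mechanism is correct; what needs care is matching the sign convention for \(\mathbf{E}([a]\otimes[b])\) in~\cite{Franz:gersten,Franz:homog} and carrying the resulting \((-1)^{|t_A(\aa')|}\) through each step. (For the applications in the paper only the \(k=2\) sign and the qualitative fact that the result is an iterated \(\cupone\)-product are used, so this point is cosmetic there.)
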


As before, \(t_{A}\colon\BB A\to A\) denotes the canonical twisting cochain.

\begin{proof}
  For all~\(a\),~\(b\in A\) of positive degrees we have
  \begin{equation}
    t_{A}\bigl([a]\circ[b]\bigr) = 
    (-1)^{\deg{a}}\,E_{1}(a;b) = (-1)^{\deg{a}+1}\,a\cupone b,
  \end{equation}
  compare~\cite[eqs.~(6.7)~\&~(6.10)]{Franz:homog}.
  Our claim follows inductively from this by evaluating the iterated product from the left to the right.
\end{proof}

Given a morphism~\(A\to B\) of hgas,
the one-sided bar construction~\(\BB(\kk,A,B)\) can be turned into a dga, compare~\cite[Prop.~2.2]{Franz:torprod}.
If all operations~\(E_{k}\) with~\(k\ge1\) are trivial in~\(B\), then the product on~\(\BB(\kk,A,B)\) is componentwise, that is,
\begin{equation}
  \label{eq:prod-BB-onesided-comm}
  (\aa\otimes b)\circ (\aa'\otimes b') = (-1)^{\deg{\aa'}\deg{b}}\,\aa\circ\aa' \otimes b\,b'
\end{equation}
for all~\(\aa\),~\(\aa'\in\BB A\) and~\(b\),~\(b'\in B\).

\section{Morphisms}
\label{sec:morph}

As in~\cite{Franz:torprod}, we write algebraic varieties in the form~\(\XXX\),
topological spaces as~\(\XX\) and simplicial sets as~\(X\).
As discussed in~\cite[Sec.~3.3]{Franz:torprod},
any lattice~\(N\cong\Z^{n}\) gives naturally rise to a simplicial torus~\(T_{N}=BN\), a compact torus~\(\TT_{N}\cong(\SS^{1})^{n}\)
and an algebraic torus~\(\TTT_{N}\cong(\C^{\times})^{n}\).
Here \(\SS^{1}\subset\C\) denotes the unit circle.

To motivate our definition of a ``toric morphism'', we start with the case of toric varieties.
Recall that morphisms of toric varieties~\(\phi\colon\XXX_{\SigmaA}\to\XXX_{\SigmaB}\)
are morphisms of complex algebraic tori~\(\LLLA=\TTT_{\NA}\to\LLLB=\TTT_{\NB}\)
that algebraically (or just continuously in the metric topology) extend to the toric varieties containing them.
They correspond to morphisms of fans~\(A\colon(\NA,\SigmaA)\to(\NB,\SigmaB)\), that is,
to morphisms of lattices~\(A\colon \NA\to \NB\) such that
\begin{equation}
  \label{eq:toric-morph}
  \forall\,\sigma\in\SigmaA\;\;\;\exists\,\tau\in\SigmaB\quad A\,\sigma\subset\tau,
\end{equation}
compare~\cite[Sec.~1.3]{Fulton:1993}.

Also recall from~\cite[p.~78]{Fulton:1993} and~\cite[Sec.~2]{Franz:2010}
that toric varieties can be defined over any topological monoid~\(\DDDD\)
with group of invertible elements~\(\SSSS\subset\DDDD\).
We write such a toric variety as~\(\XXX_{\Sigma}(\DDDD)\), so that \(\XXX_{\Sigma}=\XXX_{\Sigma}(\C)\).

For a given rational fan~\(\Sigma\) in the vector space~\(N_{\R}=N\otimes_{\Z}\R\),
we write \(x_{v}\in N\) for the the minimal lattice representative of the ray~\(v\in V\).
If these vectors do not span the lattice~\(N\), we add ``ghost rays'' to~\(V\)
such that the minimal lattice representatives of both the real and the ghost rays
do span \(N\). Note that the ghost rays do not appear in~\(\Sigma\).

Let \(\hatN=\Z^{V}\), and let \(\hatSigma\) be the fan in~\(\hatN_{\R}\)
that is combinatorially equivalent to~\(\Sigma\) and whose rays
are the canonical basis elements~\(e_{v}\in\hatN\) with~\(v\in V\). Also,
let \(\ZZZ_{\Sigma}(\DDDD)=\XXX_{\hatSigma}(\DDDD)\) be the Cox construction of~\(\XXX_{\Sigma}(\DDDD)\)
and write \(\TTT=\TTT_{\hatN}\). Then
\begin{equation}
  \XXX_{\Sigma} = \ZZZ_{\Sigma}/\KKK
\end{equation}
for some closed subgroup~\(\KKK\subset\TTT\) acting freely on~\(\ZZZ_{\Sigma}\).

Let \(\DD^{2}\subset\C\) be the unit disc, considered as a monoid with group of invertible elements \(\SS^{1}\).
Note that \(\ZZ_{\Sigma}=\ZZZ_{\Sigma}(\DD^{2})\) is nothing but the moment-angle complex
\begin{equation}
  \ZZ_{\Sigma} = \bigl(\DD^{2},\SS^{1}\bigr)^{\Sigma} = \colim_{\sigma\in\Sigma} \bigl(\DD^{2},\SS^{1}\bigr)^{\sigma}
\end{equation}
defined by~\(\Sigma\). Here we are using the simplicial version of the polyhedral product functor,
compare~\cite[Secs.~4.2~\&~8.1]{BuchstaberPanov:2015}.
We also write \(\XX_{\Sigma}=\XXX_{\Sigma}(\DD^{2})\)
as well as \(\LL\), \(\TT\) and~\(\KK\) for the compact abelian groups
corresponding to~\(\LLL\),~\(\TTT\) and~\(\KKK\).

\begin{lemma}
  \label{thm:toric-top}
  Let \(\Sigma\) be a rational fan.
  \begin{enumroman}
  \item \label{thm:toric-X-Z-K}
    The projection~\(\ZZ_{\Sigma}\to\XX_{\Sigma}\) induces a homeomorphism
    \begin{equation*}
      \XX_{\Sigma} = \ZZ_{\Sigma}/\KK.
    \end{equation*}
  \item \label{thm:toric-retract}
    The inclusion
    \begin{equation*}
      \XX_{\Sigma} \hookrightarrow \XXX_{\Sigma}
    \end{equation*}
    is an \(\LL\)-equivariant strong deformation retract.
  \end{enumroman}
\end{lemma}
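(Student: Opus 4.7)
The plan is to treat the two parts separately.

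For part~(i), the strategy is a local-to-global argument on the cones of~\(\hatSigma\). On a single cone~\(\sigma\), the polyhedral-product description gives~\(\ZZ_{\sigma} = (\DD^{2},\SS^{1})^{\sigma}\), and a direct computation on unit-circle-valued points identifies~\(\ZZ_{\sigma}/\KK\) with the affine piece~\(\XX_{\sigma} = \XXX_{\sigma}(\DD^{2})\). Because~\(\ZZ_{\Sigma}\) is defined as a colimit of these pieces and the quotient by the compact group~\(\KK\) commutes with that colimit, the local identifications assemble to the claimed homeomorphism. This is the specialisation to~\(\DDDD = \DD^{2}\) of the formalism for toric varieties over arbitrary topological monoids developed in~\cite[Sec.~2]{Franz:2010}.

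For part~(ii), the plan is to construct the retraction using the polar decomposition~\(\LLL = \LL \cdot \R_{>0}^{n}\) of the algebraic torus, which induces an~\(\R_{>0}^{n}\)-action on~\(\XXX_{\Sigma}\) that commutes with the~\(\LL\)-action. I would send each point~\(x \in \XXX_{\Sigma}\) to~\(\rho(x)\cdot x\), where~\(\rho(x) \in \R_{>0}^{n}\) is a continuous scaling chosen so that the result lies in~\(\XX_{\Sigma}\); the deformation-retract homotopy is obtained by interpolating the scaling factors towards~\(1\). \(\LL\)-equivariance is automatic since the two torus actions commute inside the abelian group~\(\LLL\), and~\(\rho(x) = 1\) for~\(x \in \XX_{\Sigma}\) ensures that the homotopy is identity on~\(\XX_{\Sigma}\) throughout.

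The main obstacle is verifying that~\(\rho\) is continuous and globally well defined. In each maximal affine chart~\(\XXX_{\sigma} \cong \C^{n}\), the assignment corresponds to the coordinate-wise retraction~\(z_{i} \mapsto z_{i}/\max(|z_{i}|,1)\); but these local retractions do not \emph{a~priori} agree under the monomial transition maps between charts, and lifting the coordinate-wise retraction to~\(\ZZZ_{\Sigma}\) fails to be~\(\KKK\)-equivariant, so it does not descend directly via part~(i). What rescues the construction is that the \(\R_{>0}^{n}\)-action itself is globally defined on~\(\XXX_{\Sigma}\) (being inherited from~\(\LLL\)) and that the subspace~\(\XX_{\Sigma}\) admits an intrinsic characterisation, for instance as the preimage of a suitable compact region under the moment map for the~\(\LL\)-action; together these yield a single coherent continuous assignment~\(x \mapsto \rho(x)\) that in each affine chart reduces to the coordinate-wise retraction. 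The full construction with its homotopy is carried out in~\cite[Sec.~2]{Franz:2010}.
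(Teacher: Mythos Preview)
Your proposal is essentially correct; the paper's own proof consists of nothing more than two citations to~\cite{Franz:torprod} (Prop.~4.1 and its proof), so your sketch in fact supplies more content than the paper does. Your local-to-global argument for part~(i) and the polar-decomposition/scaling argument for part~(ii) are the standard approaches underlying those cited results, and your identification of the subtlety---that the coordinatewise radial retraction on~\(\ZZZ_{\Sigma}\) is \(\TT\)-equivariant but not \(\KKK\)-equivariant, hence does not descend naively---is exactly the point that forces one to work with the intrinsic \(\R_{>0}^{n}\)-action on~\(\XXX_{\Sigma}\) rather than with the Cox construction. The only discrepancy is bibliographic: you cite~\cite{Franz:2010} whereas the paper cites~\cite{Franz:torprod}; both are by the first author and cover this material.
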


\begin{proof}
  The first part is implicit in the proof of~\cite[Prop.~4.1]{Franz:torprod}.
  This latter result is the second part above.
\end{proof}

We now generalize from smooth toric varieties to partial quotients
as in~\cite[Sec.~4]{Franz:torprod}, see also~\cite[Sec.~2.8]{BuchstaberPanov:2015}. 

Let \(\posetB\) be a simplicial poset on the vertex set~\(V\).
The empty simplex~\(\emptyset\) is always contained in~\(\posetB\). 
We furthermore
assume \(\posetB\) and~\(V\) to be finite and \(V\) additionally ordered
and possibly containing ghost vertices not appearing in~\(\posetB\).
By abuse of notation, we denote the complete simplicial complex on~\(V\) by the same letter.
We will also use of the folding map~\(\posetB\to V\), which sends each simplex~\(\sigma\in \posetB\) to its vertex set~\(V(\sigma)\subset V\).

Given a simplicial poset~\(\posetB\), we define the moment-angle complex
\begin{equation}
    \ZZ_{\posetB} = \colim_{\sigma\in P}\ZZ_{\sigma}
\end{equation}
where
\begin{equation}
    \ZZ_{\sigma} = (\DD^{2},\SS^{1})^{\sigma} \coloneqq (\DD^{2})^{\sigma} \times (\SS^{1})^{V\setminus\sigma}.
\end{equation}
The torus~\(\TT=(\SS^{1})^{V}\) canonically acts on~\(\ZZ_{\posetB}\).

Let \(\KK\subset\TT=(\SS^{1})^{V}\) be a closed subgroup acting freely on~\(\ZZ_{\posetB}\).
Based on~\(\posetB\) and~\(\KK\) we define the partial quotient~\(\parqdef{\posetB}{\KK}\).
It comes equipped with a canonical action of the quotient group~\(\LL=\TT/\KK\); the quotient map
\begin{equation}
  \kappa\colon \ZZ_{\posetB} \to \parqdef{\posetB}{\KK}
\end{equation}
is equivariant with respect to the projection~\(\TT\to\LL\).
We also write \(\kappa\) for the projection map~\(\hatN=\Z^{V}=H_{1}(\TT;\Z)\to N=H_{1}(\LL;\Z)\)
between lattices as well as for the one between the corresponding real vector spaces.

For any~\(\sigma\in\posetB\),
the restriction~\(\kappa\colon\ZZ_{\sigma}\to\parqdef{\sigma}{\KK}\)
is the Cox construction
discussed above in the special case~\(\DDDD=\DD^{2}\). If we consider \(\sigma\) as a cone in~\(\hatN_{\R}\)
spanned by the~\(e_{v}\) with~\(v\in V\), then \(\kappa(\sigma)\) is a cone in~\(\N_{\R}\)
whose rays~\(\xv{v}=\kappa(e_{v})\in N\) extend to a basis for~\(N\).

\begin{remark}
  To every smooth toric variety~\(\XX_{\Sigma}\) one can associate a partial quotient as in \Cref{thm:toric-top}.
  Then \(\kappa(\sigma)=\sigma\) for each cone~\(\sigma\in\Sigma\).
  In general, however, the cones~\(\kappa(\sigma)\) defined above do not form a fan.
  The fan property is crucial in algebraic geometry when gluing together the affine toric varieties
  associated to each cone. The topological setting therefore is strictly more general than that of smooth toric varieties,
  compare~\cite[Ex.~1.19]{DJ:1991}.
\end{remark}

Consider another partial quotient~\(\parqdef{\posetA}{\KK'}\).
Let \(A\colon \NA\to \NB\) be a morphism of lattices,
and let \(\nu\colon\posetA\to\posetB\) be an order-preserving function such that
\begin{equation}
  A\,\kappaA(\sigma) \subset \kappaB(\nu(\sigma))
\end{equation}
for all~\(\sigma\in\posetA\).
We call the pair~\((A,\nu)\) a \newterm{toric morphism of simplicial posets} and write it as
\begin{equation}
  (A,\nu)\colon (\NA,\posetA) \to (\NB,\posetB).
\end{equation}
It induces a morphism~%
\(
  \parqdef{\sigma}{\KK'} \to \parqdef{\nu(\sigma)}{\KK}
\)
for each~\(\sigma\in\posetA\).
Given that \(\nu\) is order-preserving, these morphisms glue together to a continuous map
\begin{equation}
  \phi=\phi_{(A,\nu)}\colon \parq{\posetA}{\KK'} = \colim_{\sigma\in\posetA} \parq{\sigma}{\KK'} \to \colim_{\tau\in\posetB} \parq{\tau}{\KK} = \parq{\posetB}{\KK},
\end{equation}
equivariant with respect to the morphism of groups~\(\LLA\to\LLB\) determined by~\(A\).

We also get a map between the Cox constructions:
The morphism~\(A\colon \NA\to \NB\) lifts to a morphism of lattices
\( 
  \hatA\colon \hatNA \to \hatNB
\) 
such that
\begin{equation}
  \label{eq:pi-A-e-x}
  \kappaB(\hatA\,e_{\vA}) = A\,\kappaA(e_{\vA}) = A\,\xv{\vA}
  \qquad\text{and}\qquad
  \hatA\,\sigma \subset \nu(\sigma)\subset\hatNB
\end{equation}
for all~\(\vA\in\VA\) and all~\(\sigma\in\posetA\).
In particular, we have a toric morphism of simplicial posets
\begin{equation}\label{eq:hatA and nu}(\hatA,\nu)\colon(\hatNA,\posetA)\to(\hatNB,\posetB).
\end{equation}
It is canonically represented by a matrix~\((\hatawv{\vB}{\vA})\in\Z^{\VB\times\VA}\).
For a non-ghost vertex~\(\vA\in\VA\) the coefficients~\(\hatawv{\vB}{\vA}\) are in fact non-negative
because \(\hatA\,e_{\vA}\) is an \(\N\)-linear combination of the basis elements~\(e_{\vB}\)
where \(\vB\) runs through the vertices of the simplex~\(\nu(\vA)\in\posetB\).

\begin{remark}
  \label{rem:hatA-unique}
  If \(\VA\) does not contain ghost vertices, then the conditions~\eqref{eq:pi-A-e-x} determine \(\hatA\) uniquely.
  In general, the coefficients~\(\hatawv{\vB}{\vA}\) for the ghost vertices~\(\vA\in\VA\) will depend on the chosen lift of~\(A\).
  We will see in \Cref{rem:hatqij-unique} that this ambiguity disappears when we construct the twisting term~\(\hatq_{ij}\)
  that via the definition~\eqref{eq:intro:def-hatKtw} enter the map~\(\hatTor{\phi}\) appearing in \Cref{thm:intro:nat}.
\end{remark}

From \(\hatA\) and~\(\nu\) we obtain the map
\begin{equation}
  \hatphi=\phi_{(\hatA,\nu)}\colon \ZZ_{\posetA} \to \ZZ_{\posetB},
\end{equation}
which is equivariant with respect to the map~\(\TTA\to\TTB\) given by~\(\hatA\).
It follows from~\eqref{eq:pi-A-e-x} and \Cref{thm:toric-top}\,\ref{thm:toric-X-Z-K} that \(\hatphi\) fits into the commutative diagram
\begin{equation}
  \begin{tikzcd}
    \ZZ_{\posetA} \arrow{d}[left]{\kappaA} \arrow{r}{\hatphi} & \ZZ_{\posetB} \arrow{d}{\kappaB} \\
    \parq{\posetA}{\KK'} \arrow{r}{\phi} & \parq{\posetB}{\KK} \mathrlap{.}
  \end{tikzcd}
\end{equation}

For~\(\sigma\in\posetA\), the restriction of~\(\hatphi\) to the ``affine charts''
\begin{equation}
  (\DD^{2},\SS^{1})^{\sigma} = \ZZ_{\sigma} \stackrel{\hatphi}{\longrightarrow} \ZZ_{\nu(\sigma)} = (\DD^{2},\SS^{1})^{\nu(\sigma)}
\end{equation}
is given in the canonical coordinates indexed by~\(\VA\) and~\(\VB\) as
\begin{equation}
  \label{eq:formula-hatphi}
  \hatphi(x)_{\vB} = \prod_{\vA\in\VA} \xv{\vA}^{\hatawv{\vB}{\vA}}.
\end{equation}
We point out that our toric morphisms of simplicial posets
lead to maps~\(\hatphi\) that are more general than the simplicial maps
considered for example in~\cite[Prop.~4.2.4]{BuchstaberPanov:2015}.

It will be useful to take an even more general point of view.
Let \(\DDDD\) be any commutative simplicial monoid and~\(\SSSS\subset\DDDD\) a simplicial subgroup.
For example, we may consider the pair~\((\DDDD,\SSSS)=(BS^{1},b_{0})\),
where \(BS^{1}\) is the simplicial classifying space of the simplicial circle~\(S^{1}=T_{\Z}\) and \(b_{0}\) its unique base point.
(Here we are using that the simplicial classifying space of a commutative simplicial group is again a commutative simplicial group.
More generally, the simplicial Borel construction of a commutative simplicial group (or monoid)
with respect to another commutative simplicial group is again a commutative simplicial group (or monoid),
compare~\cite[p.~98]{May:1968}.)
The polyhedral product associated to the pair~\((BS^{1},b_{0})\) is the simplicial Davis--Januszkiewicz space
\begin{equation}
  \label{eq:def-DJ}
  \DJ_{\posetB} = (BS^{1},b_{0})^{\posetB}.
\end{equation}

Using \eqref{eq:formula-hatphi} as a definition, we get from~\((\hatA,\nu)\)
a map of simplicial monoids
\begin{equation}
  \label{eq:def-phi-A-nu-sigma}
  (\DDDD,\SSSS)^{\sigma} \to (\DDDD,\SSSS)^{\nu(\sigma)}
\end{equation}
for any pair~\((\DDDD,\SSSS)\) as above and any~\(\sigma\in\posetA\).

\begin{lemma} \( \)
  \label{thm:pp-toric-natural}
  \begin{enumroman}
  \item
    The maps~\eqref{eq:def-phi-A-nu-sigma} induce a well-defined morphism of simplicial monoids
    \begin{equation*}
      \hatphi \colon (\DDDD,\SSSS)^{\posetA} \to (\DDDD,\SSSS)^{\posetB}.
    \end{equation*}
  \item
    This way, the polyhedral product~\((\DDDD,\SSSS)^{\posetB}\) becomes bifunctorial
    with respect to multiplicative maps of pairs~\((\DDDDA,\SSSSA)\to(\DDDDB,\SSSSB)\)
    and toric morphisms of simplicial posets~\((\hatA,\nu)\colon(\hatNA,\posetA)\to(\hatNB,\posetB)\).
  \end{enumroman}
\end{lemma}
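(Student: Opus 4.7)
The plan is to verify part (i) by checking two things: first, that formula~\eqref{eq:formula-hatphi} genuinely lands in~\((\DDDD,\SSSS)^{\nu(\sigma)}\) for each~\(\sigma\in\SigmaA\); second, that these local maps are compatible with the face inclusions and hence descend to the colimits.

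For the first point, I would fix \(\sigma\in\SigmaA\) and a vertex~\(\wB\in\VB\) with~\(\wB\notin V(\nu(\sigma))\), and argue that every factor of~\(\hatphi(x)_{\wB}=\prod_{\vA\in\VA}\xv{\vA}^{\hatawv{\wB}{\vA}}\) lies in~\(\SSSS\). The ghost vertices~\(\vA\in\VA\) and the non-ghost vertices~\(\vA\notin V(\sigma)\) contribute factors in~\(\SSSS\) by the definition of~\((\DDDD,\SSSS)^{\sigma}\). The crucial remaining case is that of a non-ghost vertex~\(\vA\in V(\sigma)\): here \(\hatA\,e_{\vA}\) lies in the cone~\(\nu(\sigma)\subset\hatNB\) and hence is an \(\N\)-linear combination of the basis elements~\(e_{\wB'}\) with~\(\wB'\in V(\nu(\sigma))\); since \(\wB\notin V(\nu(\sigma))\), the exponent~\(\hatawv{\wB}{\vA}\) vanishes and the factor equals~\(1\). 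The same non-negativity observation shows that the exponents applied to \(\DDDD\)-valued coordinates are non-negative, so the formula makes sense in the monoid~\(\DDDD\).

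For the second point, if \(\sigma'\subset\sigma\) in~\(\SigmaA\) then \(\nu(\sigma')\subset\nu(\sigma)\) by the order-preserving property of~\(\nu\), and the face inclusions on both source and target are simply the coordinate-wise inclusion~\(\SSSS\hookrightarrow\DDDD\). Since~\eqref{eq:formula-hatphi} is literally the same expression on both faces, the compatibility square commutes tautologically, and the colimit yields the desired map~\(\hatphi\). It is a morphism of simplicial monoids because on each chart it is built from products (on non-ghost coordinates, where the exponents are non-negative) and inversions in the simplicial group~\(\SSSS\) (on ghost coordinates); multiplicativity then follows from the commutativity of~\(\DDDD\).

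Part~(ii) is then bookkeeping. Functoriality in the pair~\((\DDDD,\SSSS)\) is immediate from the fact that a multiplicative map of pairs acts coordinate-wise and commutes with taking powers. For toric morphisms, the identity case is clear, and composition with a second morphism whose matrix we denote~\((\bis{u}{\wB})\) reduces to the matrix identity for the composed matrix together with the rearrangement
\begin{equation*}
  \prod_{\vA}\xv{\vA}^{\sum_{\wB}\bis{u}{\wB}\,\hatawv{\wB}{\vA}}
  =\prod_{\wB}\Bigl(\prod_{\vA}\xv{\vA}^{\hatawv{\wB}{\vA}}\Bigr)^{\bis{u}{\wB}},
\end{equation*}
which is valid because~\(\DDDD\) is commutative. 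The only genuinely non-trivial step is in the first paragraph, where one must combine the geometric containment~\(\hatA\,\sigma\subset\nu(\sigma)\) with the non-negativity of the matrix entries~\(\hatawv{\wB}{\vA}\) on non-ghost vertices---exactly the properties that the setup preceding the lemma has been arranged to guarantee.
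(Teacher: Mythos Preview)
Your argument is correct and fills in the details that the paper leaves implicit. The paper's own proof is a single sentence: it displays the square
\[
\begin{tikzcd}
(\DDDDA,\SSSSA)^{\sigma} \arrow{d} \arrow{r} & (\DDDDB,\SSSSB)^{\sigma} \arrow{d} \\
(\DDDDA,\SSSSA)^{\nu(\sigma)} \arrow{r} & (\DDDDB,\SSSSB)^{\nu(\sigma)}
\end{tikzcd}
\]
and asserts that its commutativity for each~\(\sigma\) ``proves both claims.'' You are doing the same direct verification, but you make explicit what the paper suppresses: that the target condition on~\(\hatphi(x)_{\wB}\) for~\(\wB\notin V(\nu(\sigma))\) follows from the containment~\(\hatA\,\sigma\subset\nu(\sigma)\) and the non-negativity of the relevant matrix entries, that ghost coordinates can carry negative exponents because they live in the group~\(\SSSS\), and that the colimit compatibility is tautological since the formula is literally the same on every chart. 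Your treatment of composition of toric morphisms via the matrix identity is likewise a detail the paper does not spell out. In short, the approaches coincide; yours is simply the unabridged version.
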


\begin{proof}
  It is readily verified that the diagram
  \begin{equation}
    \begin{tikzcd}
      (\DDDDA,\SSSSA)^{\sigma} \arrow{d} \arrow{r} & (\DDDDB,\SSSSB)^{\sigma} \arrow{d} \\
      (\DDDDA,\SSSSA)^{\nu(\sigma)} \arrow{r} & (\DDDDB,\SSSSB)^{\nu(\sigma)}
    \end{tikzcd}
  \end{equation}
  commutes for every~\(\sigma\in\posetA\), which proves both claims.
\end{proof}

Because of the bifunctoriality we get an action of the group~\(\SSSS^{V} = (\SSSS,\SSSS)^{\posetB}\) on~\((\DDDD,\SSSS)^{\posetB}\)
such that the map~\(\hatphi\) is equivariant with the induced morphism of groups~\(\SSSS^{\VA}\to\SSSS^{\VB}\).

We write \(Y_{G}=EG\timesunder{G}Y\) for the simplicial Borel construction of the \(G\)-space~\(Y\) with respect to the simplicial group~\(G\),
compare~\cite[Sec.~3.2]{Franz:torprod}.
In the proof of~\cite[Lemma~5.1]{Franz:torprod} we observed that both arrows in the zigzag
\begin{equation}
  \label{eq:Borel-ZK-DJ}
  (\ZZ_{\posetB})_{T}
  = \bigPolyProd{\posetB}{(\DD^{2})_{\SimpSOne}}{(\SS^{1})_{\SimpSOne}}
  \leftarrow \bigPolyProd{\posetB}{(\DD^{2})_{\SimpSOne}}{e_{0}}
  \to \PolyProd{\posetB}{BS^1}{b_{0}} = \SimpDJ_{\posetB}
\end{equation}
are homotopy equivalences, where \(e_{0}\) is the canonical base points of~\(ES^{1}\).

\begin{proposition}
  \label{thm:ZZSigma-DJSigma-toric-morph}
  The homotopy equivalences~\eqref{eq:Borel-ZK-DJ} connecting \((\ZZ_{\posetB})_{T}\) and~\(DJ_{\posetB}\)
  commute with maps induced by toric morphisms of simplicial posets,
  and so does the map~\(DJ_{\posetB}\to BT\).
\end{proposition}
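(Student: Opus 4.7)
The plan is to exhibit every arrow in the zigzag \eqref{eq:Borel-ZK-DJ} as well as the map $\SimpDJ_\Sigma \to B\TT$ as the image under the polyhedral product functor $(-,-)^\Sigma$ of a multiplicative map of pairs of commutative simplicial monoids. The statement will then follow at once from the bifunctoriality established in \Cref{thm:pp-toric-natural}\,(ii).

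First I would verify multiplicativity of the three maps of pairs in question. The left arrow in \eqref{eq:Borel-ZK-DJ} is the identity on $(\DD^2)_{\SimpSOne}$ together with the inclusion $e_0 \hookrightarrow (\SS^1)_{\SimpSOne}$ of the identity into the subgroup of invertible elements. The right arrow is the Borel construction of the multiplicative contraction $\DD^2 \to *$, giving the pair map $((\DD^2)_{\SimpSOne}, e_0) \to (BS^1, b_0)$. Finally, the map $\SimpDJ_\Sigma \to B\TT$ is the polyhedral product of the pair inclusion $(BS^1, b_0) \hookrightarrow (BS^1, BS^1)$, under the identification $(BS^1, BS^1)^\Sigma = (BS^1)^V = B\TT$. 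Each of these pair maps is multiplicative because in every case the submonoid on the source is (or maps to) the identity element of the target.

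For a toric morphism $(\hatA,\nu)\colon(\hatNA,\SigmaA)\to(\hatNB,\SigmaB)$, \Cref{thm:pp-toric-natural}\,(ii) supplies for each of these three pairs a commutative square relating the polyhedral products over $\SigmaA$ and $\SigmaB$. Stacking the first two squares yields commutativity of the zigzag \eqref{eq:Borel-ZK-DJ} with $\hatphi$, while the third square gives commutativity of $\SimpDJ_\Sigma \to B\TT$ with the induced map on classifying spaces of tori.

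The only point that requires care is the identification of $(\ZZ_\Sigma)_\TT$ with $((\DD^2)_{\SimpSOne}, (\SS^1)_{\SimpSOne})^\Sigma$ implicit in \eqref{eq:Borel-ZK-DJ}: one must check that the map of Borel constructions induced by $\hatphi$ agrees with the polyhedral-product map furnished by bifunctoriality from $(\hatA,\nu)$. This compatibility is essentially the content of the proof of Lemma~5.1 in~\cite{Franz:torprod}, so I would invoke that argument directly rather than reproduce it here.
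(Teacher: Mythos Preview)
Your proposal is correct and takes essentially the same approach as the paper, which simply says ``This follows by applying \Cref{thm:pp-toric-natural} to the maps of pairs used in~\eqref{eq:Borel-ZK-DJ}.'' You have spelled out the relevant pair maps and flagged the compatibility of the two descriptions of the induced map on Borel constructions, a point the paper leaves implicit.
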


\begin{proof}
  This follows by applying \Cref{thm:pp-toric-natural} to the maps of pairs used in~\eqref{eq:Borel-ZK-DJ}.
\end{proof}

\section{Classifying spaces of tori}
\label{sec:BT}

\subsection{Review of the formality map for~\texorpdfstring{\(BT\)}{BT}}
\label{sec:formality-BT}

\def\nn{m}

Let \(\Sl\) be a symmetric coalgebra on cogenerators~\(y_{1}\),~\dots,~\(y_{\nn}\) of degree~\(2\).
The dual~\(\Su\) of~\(\Sl\) then is a polynomial algebra~\(\kk[t_{1},\dots,t_{\nn}]\) on generators~\(t_{i}\) dual to the \(y_{i}\)'s.
In this section we review and mildly generalize the construction of a dga morphism
\begin{equation}
  f^{*}\colon C^{*}(BT) \to \Su
\end{equation}
where \(T\) is a torus and
\begin{equation}
  \pi\colon ET\to BT
\end{equation}
the simplicial construction of its universal bundle.
As in~\cite[Sec.~5]{Franz:gersten},
by a torus we mean a compact torus~\(\cong(\SS^{1})^{\nn}\), an algebraic torus~\(\cong(\C^{\times})^{\nn}\)
or a simplicial torus, that is, the simplicial classifying space~\(BN\) of a lattice~\(N\cong\Z^{\nn}\).

The elements of the canonical \(\kk\)-basis for~\(\Sl\) are written as \(y_{\alpha}\), indexed by a multi-index~\(\alpha\in\N^{\nn}\).
If a component~\(\alpha_{i}\) of~\(\alpha\) is non-zero, we write \(\alpha|i\) for the multi-index that is obtained from it
by decreasing the \(i\)-th component by~\(1\).

Let \(\Ll\) be the (strictly) exterior algebra on generators~\(x_{1}\),~\dots,~\(x_{\nn}\) of degree~\(1\).
The differential of the Koszul complex
\begin{equation}
  \Kl = \Ll \otimes \Sl
\end{equation}
is given by
\begin{equation}
  d(a\otimes y_{\alpha}) = \sum_{\alpha_{i}>0} x_{i}\,a\otimes y_{\alpha|i}.
\end{equation}
We often write \(a\,y\) instead of~\(a\otimes y\in\Kl\), and also \(y_{0}=1\).

We first define a map
\begin{equation}
  F\colon \Kl \to C(ET).
\end{equation}
Let \(c_{1}\),~\dots,~\(c_{\nn}\in C_{1}(T)\) be linear combination of loops (hence cycles).
Recall that a loop in~\(T\) is a \(1\)-simplex whose vertices are both equal to~\(1\in T\).
We recursively set
\begin{alignat}{2}
  \label{eq:def-f-0}
  F(1) &= e_{0},\\
  \label{eq:def-f-S}
  F(y) &= S\,F(d y) &\qquad& \text{if \(|y| > 0\)}, \\
  \label{eq:def-f-a}
  F(x_{i_{1}}\cdots x_{i_{k}}\, y) &= c_{i_{1}}\cdots c_{i_{k}}\cdot F(y)
\end{alignat}
for~\(y\in\Sl\) and~\(1\le i_{1}<\dots<i_{k}\le\nn\).
See the following section for the contracting homotopy~\(S\) of~\(C(ET)\)
as well as for the action of the augmented dga~\(C(T)\) on~\(C(ET)\) in~\eqref{eq:def-f-a}.
We note that \(F\) is independent of the order of the basis~\((x_{i})\) because \(C(T)\) is (graded) commutative.

The map~\(F\) induces a map
\begin{equation}
  f\colon \Sl = \kk \otimes_{\Ll} \Kl \to C(BT),
  \quad
  y \mapsto \pi_{*}\,F(y).
\end{equation}
The last claim in the following result explains the name ``formality map'' for~\(f^{*}\).
By abuse of terminology, we use it for any map~\(f^{*}\) constructed as described above.

\begin{theorem}
  \label{thm:f-dgc}
  The map~\(f\) is a morphism of dgcs, hence its dual~\(f^{*}\) a morphism of hgas.
  If \(\Sl=H(BT)\), \(\Ll=H(T)\) and \(c_{1}\),~\dots,~\(c_{\nn}\) represent a basis for~\(H_{1}(T)\)
  that under transgression corresponds to the basis~\(y_{1}\),~\dots,~\(y_{\nn}\) for~\(H_{2}(BT)\),
  then \(f\) and~\(f^{*}\) induce the identity in (co)homology.
\end{theorem}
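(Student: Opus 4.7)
The proof naturally falls into three parts: the chain map property, the coalgebra property, and the cohomological identification.

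For the chain map property, I would induct on the degree of $y\in\Sl$. The base case $y=1$ is trivial. For $|y|>0$, applying $d$ to the recursion~\eqref{eq:def-f-S} and using the contracting homotopy identity $dS + Sd = 1 - \iota\epsilon$ gives
\[
  dF(y) = dSF(dy) = F(dy) - \iota\epsilon F(dy) - SdF(dy).
\]
The second term vanishes because $F(dy)$ has positive degree, and the third by induction combined with $d^2=0$. The chain map property extends from $1\otimes\Sl$ to all of~$\Kl$ via~\eqref{eq:def-f-a}, the Leibniz rule for the $C(T)$-action on~$C(ET)$, and the fact that the $c_i$ are cycles. Composing with~$\pi_*$ then yields the chain map $f\colon(\Sl,0)\to C(BT)$, noting that the boundary terms $\sum x_i\otimes y_{\alpha|i}$ are sent by $\pi_* F$ to null-homologous chains, reflecting that loops in~$T$ become null-homologous in~$BT$.

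The coalgebra compatibility $\Delta_{C(BT)}\,f = (f\otimes f)\,\Delta_{\Sl}$ is the principal obstacle. The plan is to lift it to a coproduct compatibility for $F\colon\Kl\to C(ET)$ and prove this inductively. Three ingredients enter. First, because~$T$ is abelian and the $c_i$ are loops, the $C(T)$-action on $C(ET)$ interacts with the Alexander--Whitney coproduct in the shuffle-compatible way that mirrors the coproduct on the exterior coalgebra~$\Ll$. Second, the contracting homotopy~$S$, built from the simplicial monoid structure on~$ET$, satisfies a controlled identity with~$\Delta$ that matches the divided-power coproduct on~$\Sl$. Third, these two ingredients combine to carry the inductive step through. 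Projection by~$\pi_*$ preserves coproducts since $\pi$ is simplicial. That $f^*$ is then a morphism of hgas follows from the Koszul-duality correspondence between hga structures on an augmented dga and bialgebra structures on its bar construction, together with the fact that the commutative~$\Su$ carries only the trivial hga structure (all $E_k=0$ for $k\ge 1$).

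For the final claim, assume $\Sl=H(BT)$ and that $c_1,\dots,c_m$ represent a $\kk$-basis of $H_1(T)$. In degree~$2$ we compute $F(y_i)=SF(dy_i)=S(c_i)$, so $f(y_i)=\pi_* S(c_i)$ is precisely the image of $[c_i]$ under the transgression $H_1(T)\to H_2(BT)$ in the Serre spectral sequence of $\pi\colon ET\to BT$; under the identification $\Sl=H(BT)$ this equals~$y_i$. Since $H(f)$ is a coalgebra map between cocommutative coalgebras cogenerated in degree~$2$ that is the identity on cogenerators, it is the identity, and dually so is $H(f^*)$.
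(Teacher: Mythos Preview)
The paper does not prove this theorem here; it simply cites \cite[Thm.~5.3]{Franz:gersten}. Your sketch of the chain-map and coalgebra arguments is broadly aligned with the machinery the paper recalls in Section~\ref{sec:BT} (the identity~\eqref{eq:diag-S} for~\(\Delta\,S\) and the equivariance formulas~\eqref{eq:equiv-diag-1}--\eqref{eq:equiv-AW21-2} are exactly your ``three ingredients''), and your cohomological identification via transgression is correct.

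There is one genuine gap. Your justification that \(f^{*}\) is a morphism of \emph{hgas}---``the Koszul-duality correspondence \ldots\ together with the fact that the commutative~\(\Su\) carries only the trivial hga structure''---does not go through. Knowing that \(f^{*}\) is a dga map to a commutative target does not force \(f^{*}\) to annihilate the operations~\(E_{k}\) on~\(C^{*}(BT)\); equivalently, \(\BB f^{*}\) being a coalgebra map does not make it a bialgebra map. What is actually needed (and what \cite{Franz:gersten} proves) is that \(f\) is compatible with \emph{all} interval-cut operations, not just~\(\AWu{(12)}=\Delta\). The paper alludes to this right after the theorem (``In fact, \(f\) is a morphism of homotopy Gerstenhaber coalgebras'') and later notes that the vanishing of~\(f^{*}\) on \(\cupone\)-products follows from \Cref{thm:SC}\,\ref{thm:SC-3} and \Cref{thm:SC-FH}\,\ref{thm:SC-FH-1}. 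Concretely, one must show \(\AWhatu{(121)}\,F(y)=0\) (and analogues for the higher surjections), which uses~\eqref{eq:AW121-S} and the structure of the submodule~\(\SC\); this is additional work beyond the dgc argument you outlined.

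A minor correction: in your chain-map paragraph, the terms \(\pi_{*}\bigl(c_{i}\cdot F(y_{\alpha|i})\bigr)\) are not merely null-homologous but actually \emph{zero} in normalized chains, since \(\pi(g\cdot e)=\pi(e)\) makes every simplex in the shuffle~\(c_{i}\cdot w\) degenerate after projection. You need this exact vanishing, not just vanishing in homology, for \(f\) to be a chain map on the nose.
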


In fact, \(f\) is a morphism of homotopy Gerstenhaber coalgebras (hgcs),
but we are not using this notion in this paper.

\begin{proof}
  See~\cite[Thm.~5.3]{Franz:gersten}.
  The additional assumptions on~\(\Sl\) and the cycles~\(c_{i}\) stated above
  are made throughout in~\cite[Sec.~5]{Franz:gersten}, but they are not needed to
  establish the (co)multiplicativity of~\(f\) and~\(f^{*}\).
\end{proof}

\subsection{The main ingredients}

We also review the main ingredients of the proof of \Cref{thm:f-dgc} since we will use them
again in the next sections.

The map~\(S\colon C(ET)\to C(ET)\) appearing in~\eqref{eq:def-f-S}
is a homotopy from the projection to the basepoint~\(e_{0}\) to the identity map of~\(ET\).
We therefore have
\begin{equation}
  \label{eq:homotopy-S}
  (d S + S d)(e) =
  \begin{cases}
    e-e_{0} & \text{if \(p=0\),} \\
    e & \text{if \(p>0\)}
  \end{cases}
\end{equation}
for any \(p\)-simplex~\(e\in ET\).
It moreover satisfies the identities
\begin{equation}
  \label{eq:properties-S}
  S S=0 \quad\text{and}\quad S e_{0}=0,
\end{equation}
see~\cite[Sec.~2.7]{Franz:gersten}.

Let \(k\),~\(\ell\geq 0\), and consider a surjection~\(u\colon \{1,2,\ldots, k+\ell\}\to \{1,2,\ldots, k\}\)
satisfying \(u(i)\neq u(i+1)\) for~\(1\leq i<k+\ell\). For any simplicial set~\(X\),
the interval cut operation~\(\AWu{u}\colon C(X)\to  C(X)^{\otimes \ell}\) associated to~\(u\) is defined by
\begin{equation}
  \label{eq:def-AW-u}
  \AWu{u}(x)=\sum \pm x(c_1)\otimes \cdots \otimes x(c_\ell)
\end{equation}
for a \(p\)-simplex~\(x\in X_{p}\), where the sum is over all indices
\begin{equation}
  0=p_0\leq p_1\leq \cdots \leq p_{k+\ell}=p,
\end{equation}
and \(c_i\) is the concatenation of all intervals~\([p_{s-1},p_s]\) such that \(u(i)=s\).
The sign not specified in~\eqref{eq:def-AW-u} 
is the product of a permutation sign and a position sign, compare~\cite[Sec.~3]{Franz:gersten}.
We will only need the diagonal of the dgc~\(C(X)\),
\begin{align}
  \AWu{(12)} = \Delta \colon C(X) \to C(X)\otimes C(X), \\
  \shortintertext{the diagonal with transposed factors,}
  \AWu{(21)} = T_{C(X),C(X)}\,\Delta \colon C(X) \to C(X)\otimes C(X) \\
  \shortintertext{and the map}
  \AWu{(121)} \colon C(X) \to C(X)\otimes C(X)
\end{align}
whose transpose is the \(\cupone\)-product, meaning that for~\(\alpha\),~\(\beta\in C^{*}(X)\) we have
\begin{equation}
  \label{eq:def-cupone}
  \alpha\cupone\beta = -\AWu{(121)}^{*}(\alpha\otimes\beta).
\end{equation}
We recall that \(\AWu{(121)}\) vanishes on \(0\)-simplices.

For~\(X=ET\) these maps interact with the homotopy~\(S\) as follows:
\begin{align}
  \label{eq:diag-S}
  \Delta\, S &= (S\otimes 1)\,\Delta + e_{0}\otimes S, \\
  \label{eq:AW21-S}
  \AWu{(21)}\,S &= (1\otimes S)\,\AWu{(21)} + S\otimes e_{0}, \\
  \label{eq:AW121-S}
  \AWu{(121)}\,S &= - (S \otimes 1)\,\AWu{(121)} + (S\otimes 1)\,\AWu{(21)}\,S,
\end{align}
see~\cite[Lemma~3.5]{Franz:gersten}.

Again for~\(X=ET\) we also use the partial projections
\begin{align}
  \label{eq:def-hatDelta}
  \hat\Delta = (1\otimes\pi_{*})\,\Delta &\colon C(ET) \to C(ET) \otimes C(BT), \\
  \AWhatu{(121)} = (1\otimes\pi_{*})\,\AWu{(121)} &\colon C(ET) \to C(ET) \otimes C(BT).
\end{align}
They are \(C(T)\)-equivariant where \(C(T)\) acts on the factor~\(C(ET)\) of the target only,
see~\cite[Cor.~3.4]{Franz:gersten}.

Since \(T\) acts on~\(ET\) from the left, we get an induced action
of the dg~bialgebra~\(C(T)\) on~\(C(ET)\). The shuffle map underlying the latter action
is a morphism of dgcs, see~\cite[(17.6)]{EilenbergMoore:1966}.
Hence for a loop~\(c\in T\) and a chain~\(w\in C(ET)\) we have
\begin{align}
  \label{eq:equiv-diag-1}
  \Delta(c\cdot w) &= \Delta\,c\cdot \Delta\,w = \bigl(c\otimes 1 + 1\otimes c\bigr)\cdot \Delta\,w \\
  \shortintertext{Likewise, for a \(2\)-simplex~\(b\in T\) whose \(1\)-dimensional front face~\(b^{1}\) and back face~\(b^{2}\) are loops, we get}
  \label{eq:equiv-diag-2}
  \Delta(b\cdot w) &= \bigl(b\otimes 1 + b^{1}\otimes b^{2} + 1\otimes b\bigr)\cdot \Delta\,w \\
  \shortintertext{and therefore also}
  \label{eq:equiv-AW21-1}
  \AWu{(21)}(c\cdot w) &= \bigl(c\otimes 1 + 1\otimes c\bigr)\cdot \AWu{(21)}\,w, \\
  \label{eq:equiv-AW21-2}
  \AWu{(21)}(b\cdot w) &= \bigl(b\otimes 1 - b^{2}\otimes b^{1} + 1\otimes b\bigr)\cdot \AWu{(21)}\,w.
\end{align}

\subsection{A homotopy between different formality maps}

A different choice of linear combinations of loops~\(\tilde{c}_{1}\),~\dots,~\(\tilde{c}_{\nn}\in C_{1}(T)\)
leads to a different map~\(\tilde{f}\colon\Kl\to C(ET)\),
hence to a different map~\(\tilde{f}^{*}\colon C^{*}(BT)\to\Su\).
We need to know how \(f^{*}\) and~\(\tilde{f}^{*}\) are related
if \(c_{i}\) and~\(\tilde{c}_{i}\) are homologous for each~\(1\le i\le\nn\).
The proofs in this section follow the same strategy as those for~\(F\) and~\(f\) in~\cite[Sec.~5]{Franz:gersten}.

\begin{remark}
  One situation where different representatives~\(\tilde{c}_{i}\) come up is the following:
  Any decomposition of~\(T\) into circles determines bases~\(x_{1}\),~\dots,~\(x_{m}\) of~\(H_{1}(T)\)
  and \(y_{1}\),~\dots,~\(y_{m}\in H_{2}(BT)\). It also determines canonical representatives~\(c_{i}\) of the~\(x_{i}\)'s:
  In the simplicial setting, \(c_{i}=[x_{i}]\in BH_{1}(T)\). Topologically,
  this corresponds to a loop that winds once around the (positively oriented) \(i\)-th circle factor with constant speed.
  Altogether we get canonical maps~\(F\colon\Kl\to C(ET)\) and~\(f\colon\Sl\to C(BT)\).
  
  A different decomposition of~\(T\) into circles likewise leads to different bases~\(x'_{i}\) and~\(y'_{i}\)
  as well as representatives~\(c'_{i}\) of the~\(x'_{i}\)'s, hence to different maps~\(F'\) and~\(f'\).
  The~\(x_{i}\)'s and~\(y_{i}\)'s can be expressed as integer linear combinations
  of the~\(x'_{j}\)'s and~\(y'_{i}\)'s, respectively.
  Defining \(\tilde{c}_{i}\) as the corresponding linear combination
  of the~\(c'_{j}\)'s gives different representatives of the~\(x_{i}\)'s, hence another map~\(\tilde{F}\colon\Kl\to C(ET)\).
  A look at the definition~\eqref{eq:def-f-0}--\eqref{eq:def-f-a} of~\(F\) shows that \(\tilde{F}\) agrees with~\(F'\)
  and therefore the induced map~\(\tilde{f}\colon\Sl\to C(BT)\) with~\(f'\).
\end{remark}

{
\def\cc{\tilde{c}}
\def\tildecc{c}
\def\ff{\tilde{F}}
\def\ffbar{\tilde{f}}
\def\tildeff{F}
\def\tildeffbar{f}
\def\hh{H}
\def\hhbar{h}

Let us choose chains~\(b_{1}\),~\dots,~\(b_{\nn}\in C_{2}(T)\) such that
\begin{equation}
  \label{eq:def-bi}
  d\,b_{i} = \cc_{i} - \tildecc_{i}
\end{equation}
for each~\(1\le i\le\nn\).
Based on the chains~\(b_{i}\) and~\(\tildecc_{i}\) and the morphism~\(\ff\), we recursively define a map
\begin{equation}
  \hh\colon \Kl\to C(ET)
\end{equation}
by setting, for~\(y\in\Sl\) and~\(1\le i\le\nn\),
\begin{alignat}{2}
  \label{eq:def-h-0}
  \hh(1) &= 0,\\
  \label{eq:def-h-S}
  \hh(y) &= - S\,\hh(d\,y) &\qquad& \text{if \(|y| > 0\)}, \\
  \label{eq:def-h-a}
  \hh(x_{i}\,a\,y) &= b_{i}\cdot\ff(a\,y) -\tildecc_{i}\cdot \hh(a\,y)
   &\qquad& \text{if \(a\in\bigwedge(x_{1},\dots,x_{i-1})\).}
\end{alignat}
The last line allows to split off generators~\(x_{i}\) one after the other
in decreasing order. For example, one has
\begin{align}
  \hh(x_{2}\,x_{1}\,y)
  &= b_{2}\cdot\ff(x_{1}\,y) - \tildecc_{2}\cdot \hh(x_{1}\,y) \\
  \notag &= b_{2}\,\cc_{1}\cdot \ff(y) - \tildecc_{2}\,b_{1}\cdot\ff(y) + \tildecc_{2}\,\tildecc_{1}\cdot \hh(y).
\end{align}

\begin{lemma}
  \label{thm:H-homotopy}
  The map~\(\hh\) is a homotopy from~\(\tildeff\) to~\(\ff\).
\end{lemma}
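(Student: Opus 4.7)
The plan is to verify the homotopy relation $d\hh + \hh d = \ff - \tildeff$ by induction on the Koszul degree of~$y\in\Kl$, mirroring the three clauses that define~$\hh$. The base case $y=1$ is immediate, since $\hh(1)=0$ and $\ff(1)=\tildeff(1)=e_0$.

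For $y\in\Sl$ with $|y|>0$, I would substitute the recursions $\hh(y)=-S\hh(dy)$, $\ff(y)=S\ff(dy)$ and $\tildeff(y)=S\tildeff(dy)$ and use the contracting-homotopy identity $(dS+Sd)=\Id$ on positive-degree chains~\eqref{eq:homotopy-S}. Since $\hh d(dy)=0$ by $d^{2}=0$, the induction hypothesis applied to~$dy$ gives $d\hh(dy)=(\ff-\tildeff)(dy)$, and applying~$S$ then yields $(d\hh+\hh d)(y)=S\,d\hh(dy)=\ff(y)-\tildeff(y)$ by the recursive definitions of~$\ff$ and~$\tildeff$.

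The main case is $y=x_i a y'$ with $a\in\bigwedge(x_1,\dots,x_{i-1})$ and $y'\in\Sl$; put $\omega=ay'$. I would expand $d\hh(y)=d\bigl(b_i\cdot\ff(\omega)-\tildecc_i\cdot\hh(\omega)\bigr)$ using the Leibniz rule for the action of $C(T)$ on~$C(ET)$, the identity $db_i=\cc_i-\tildecc_i$, and the fact that~$\ff$ is a chain map. Together with the factorizations $\ff(x_i\omega)=\cc_i\,\ff(\omega)$ and $\tildeff(x_i\omega)=\tildecc_i\,\tildeff(\omega)$, which follow from the graded commutativity of~$C(T)$ (available since~$T$ is abelian), and the identity $d(x_i\omega)=-x_i\,d\omega$ in~$\Kl$, the induction hypothesis on~$\omega$ then reduces the target relation to the single identity
\begin{equation*}
  \hh(x_i\,d\omega)=b_i\,\ff(d\omega)-\tildecc_i\,\hh(d\omega).
\end{equation*}

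The hardest step will be this last identity. I would verify it term by term against the Koszul expansion $d\omega=\sum_{j}x_j a\,y_{\alpha|j}$ (writing $y'=y_{\alpha}$), applying the $\hh$-recursion to each summand $\hh(x_i x_j a\,y_{\alpha|j})$. When $j<i$ and $x_j\notin a$ the product $x_i x_j a$ has~$i$ as its leading index and the recursion applies directly; when $j>i$ the antisymmetry $x_i x_j=-x_j x_i$ in~$\Ll$ lets the recursion be applied with leading index~$j$, and the resulting contributions match after bookkeeping. The delicate case is $j=i$: the left-hand contribution vanishes since $x_i^{2}=0$ in~$\Ll$, and one must confirm that the corresponding right-hand contribution also vanishes, which uses the graded commutativity of~$C(T)$ together with $\tildecc_i^{2}=0$ in~$C(T)$ (valid for $1$-chains on the abelian group~$T$).
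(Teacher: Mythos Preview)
Your outline tracks the paper's proof step for step: the base case, the reduction for~$y\in\Sl$ via the contracting homotopy~$S$, and the Leibniz expansion for~$x_i a y$ all match. The paper, like you, arrives at the identity
\[
  \hh\bigl(x_i\,d(a y)\bigr) = b_i\cdot\ff\bigl(d(a y)\bigr) - \tildecc_i\cdot\hh\bigl(d(a y)\bigr),
\]
but simply invokes the recursion~\eqref{eq:def-h-a} at this point and moves on; it does not carry out the term-by-term check you propose.

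You are right that this is the step requiring care, since $d(ay)$ need not lie in $\bigwedge(x_1,\dots,x_{i-1})\otimes\Sl$. However, your proposed verification does not close. For a summand $x_j\,a\,y_{\alpha|j}$ with $j>i$, unwinding both sides (peeling off~$x_j$ first on the left, then using the recursion for~$\hh(x_j a\,y_{\alpha|j})$ on the right) and applying graded commutativity of~$C(T)$ leaves the residual
\[
  \bigl(b_i(\tildecc_j-\cc_j)+b_j(\tildecc_i-\cc_i)\bigr)\cdot\ff(a\,y_{\alpha|j})
  \;=\; -d(b_i b_j)\cdot\ff(a\,y_{\alpha|j}),
\]
which is not zero for general chains $b_i,b_j\in C_2(T)$. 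Likewise, for $j=i$ the right-hand side computes to $b_i(\cc_i-\tildecc_i)\cdot\ff(a\,y_{\alpha|i}) + \tildecc_i^{2}\cdot\hh(a\,y_{\alpha|i})$; strict commutativity kills $\tildecc_i^{2}$, but you also need $b_i\,db_i=0$, which you have not argued and which is not a consequence of graded commutativity alone. So the ``bookkeeping'' you sketch for $j\ge i$ does not go through as stated, and the gap is precisely the one the paper's proof passes over without comment.
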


\begin{proof}
  Let \(y\in\Sl\).
  For~\(y=1\) we have
  \begin{equation}
    (d\,\hh+\hh\,d)(y)=0=\ff(y)-\tildeff(y).
  \end{equation}
  For~\(\deg{y}>0\) we have by~\eqref{eq:homotopy-S} and induction
  \begin{align}
    d(\hh)(y) &= d\,\hh(y)+\hh(d\,y) = -d\,S\,\hh(d\,y)+\hh(d\,y) = S\,d\,\hh(d\,y) \\
    \notag &= S(\ff-\tildeff-\hh\,d)(d\,y)
    = S\,\ff(d\,y)-S\,\tildeff(d\,y) = \ff(y)-\tildeff(y).
  \end{align}
  For~\(1\le i\le\nn\) and \(a\in\bigwedge(x_{1},\dots,x_{i-1})\) we have
  \begin{align}
    d\,\hh(x_{i}\,a\,y) &= d\bigl(b_{i}\cdot\ff(a\,y) - \tildecc_{i}\cdot \hh(a\,y) \bigr) \\
    \notag &= d\,b_{i}\cdot \ff(a\,y) + b_{i}\cdot d\,\ff(a\,y) + \tildecc_{i}\cdot d\,\hh(a\,y) \\
    \notag &= b_{i}\cdot\ff(d(a\,y)) + (\cc_{i} - \tildecc_{i})\cdot\ff(a\,y) + \tildecc_{i}\cdot(\ff-\tildeff-\hh d)(a\,y) \\
    \notag &= b_{i}\cdot\ff(d(a\,y)) + \cc_{i}\cdot\ff(a\,y) - \tildecc_{i}\cdot\tildeff(a\,y) - \tildecc_{i}\cdot \hh(d(a\,y)) \\
    \notag &= \ff(x_{i}\,a\,y) - \tildeff(x_{i}\,a\,y) + \hh(x_{i}\,d(a\,y)) \\
    \notag &= \ff(x_{i}\,a\,y) - \tildeff(x_{i}\,a\,y) - \hh d(x_{i}\,a\,y).
  \end{align}
  This completes the proof.
\end{proof}

In analogy with~\(f\) we introduce the map
\begin{equation}
  \hhbar\colon \Sl=\kk\otimes_{\Ll}\Kl \to C(BT),
  \quad
  y\mapsto \pi_{*}\,H(y).
\end{equation}
We also define the ``skewed diagonal''
\begin{equation}
  \hat\Delta=(1\otimes\pi_{*})\,\Delta\colon\Kl\to\Kl\otimes\Sl.
\end{equation}
It is \(\Ll\)-equivariant where \(\Ll\) acts only on the first factor of the target,
similarly to the \(C(T)\)-equivariance of the map~\(\hat\Delta\colon C(ET)\to C(ET)\otimes C(BT)\) from~\eqref{eq:def-hatDelta}.

\begin{lemma}
  \label{thm:coalg-homotopy}
  The map~\(\hh\) satisfies
  \begin{equation*}
    \hat\Delta\, \hh = (\tildeff\otimes\hhbar + \hh\otimes\ffbar)\,\hat\Delta.
  \end{equation*}
\end{lemma}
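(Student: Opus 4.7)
The plan is to proceed by induction on the degree of elements in~$\Kl$, following the recursive definition of~$\hh$ from~\eqref{eq:def-h-0}, \eqref{eq:def-h-S} and~\eqref{eq:def-h-a}, mirroring the strategy used to prove~\Cref{thm:H-homotopy}. The base case~$z=1$ is immediate as both sides vanish. For~$z=y\in\Sl$ with~$|y|>0$, apply~$\hat\Delta$ to~$\hh(y)=-S\,\hh(d\,y)$ and use the identity $\hat\Delta\,S=(S\otimes 1)\,\hat\Delta + e_{0}\otimes\pi_{*}\,S$ obtained from~\eqref{eq:diag-S} by applying~$(1\otimes\pi_{*})$. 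Since $\pi_{*}\,S\,\hh(d\,y)=-\hhbar(y)$ by definition of~$\hhbar$, this yields
\begin{equation*}
  \hat\Delta\,\hh(y) = -(S\otimes 1)\,\hat\Delta\,\hh(d\,y) + e_{0}\otimes\hhbar(y).
\end{equation*}
The inductive hypothesis applied to~$d\,y$ (of strictly smaller degree) expresses $\hat\Delta\,\hh(d\,y)$ through~$\tildeff$, $\hh$, $\hhbar$ and~$\ffbar$, while the right-hand side $(\tildeff\otimes\hhbar+\hh\otimes\ffbar)\,\hat\Delta(y)$ can be expanded using the recursive identities $\tildeff(y_{(1)})=S\,\tildeff(d\,y_{(1)})$ and $\hh(y_{(1)})=-S\,\hh(d\,y_{(1)})$ on positive-degree coproduct components, together with the values~$\tildeff(1)=e_{0}$ and~$\hh(1)=0$; the two expressions match.

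For~$z=x_{i}\,a\,y$ with~$a\in\bigwedge(x_{1},\dots,x_{i-1})$, apply~$\hat\Delta$ to $\hh(x_{i}\,a\,y)=b_{i}\cdot\ff(a\,y)-\tildecc_{i}\cdot\hh(a\,y)$. The crucial observation is that the cross term $b_{i}^{1}\otimes b_{i}^{2}$ in~$\Delta(b_{i})$, and the term $1\otimes\tildecc_{i}$ in~$\Delta(\tildecc_{i})$, both place a positive-degree chain of the fiber~$T$ in the second factor, which is annihilated by~$\pi_{*}$. The $C(T)$-equivariance of~$\hat\Delta$ noted after~\eqref{eq:def-hatDelta} therefore simplifies matters to $\hat\Delta(b_{i}\cdot w)=(b_{i}\otimes 1)\,\hat\Delta\,w$ and $\hat\Delta(\tildecc_{i}\cdot w)=(\tildecc_{i}\otimes 1)\,\hat\Delta\,w$. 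Combining this with the comultiplicative analogue of~\Cref{thm:f-dgc} for~$\ff$, namely $\hat\Delta\,\ff=(\ff\otimes\ffbar)\,\hat\Delta$ (established along the lines of~\cite[Sec.~5]{Franz:gersten}), and with the inductive hypothesis for~$\hh(a\,y)$, brings the left-hand side into a form directly comparable with $(\tildeff\otimes\hhbar+\hh\otimes\ffbar)\,\hat\Delta(x_{i}\,a\,y)$ expanded via~\eqref{eq:def-f-a} and~\eqref{eq:def-h-a}.

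The main obstacle is the Koszul sign bookkeeping. Since~$\hh$ and~$\hhbar$ are of odd total degree, the Koszul sign $(-1)^{|u|}$ in $(\tildeff\otimes\hhbar)(u\otimes v)$ must be tracked throughout the calculation and made to cancel against the signs from reordering the exterior generators needed to evaluate $\tildeff(x_{i}\,a\,y_{(1)})$ via~\eqref{eq:def-f-a} as well as those arising from the graded commutativity of the shuffle action of~$C(T)$ on~$C(ET)$. Ensuring that every term pairs up correctly is the principal computational check.
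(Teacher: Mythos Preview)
Your proposal is correct and follows essentially the same strategy as the paper's proof: induction on degree along the three-case recursion defining~\(H\), using the identity \(\hat\Delta\,S=(S\otimes1)\,\hat\Delta+e_{0}\otimes\pi_{*}S\) for the \(\Sl\)-step and the \(C(T)\)-equivariance of~\(\hat\Delta\) together with \(\hat\Delta\,\tilde F=(\tilde F\otimes\tilde f)\,\hat\Delta\) for the \(x_{i}\,a\,y\)-step. Your explanation of why the cross terms \(b_{i}^{1}\otimes b_{i}^{2}\) and \(1\otimes c_{i}\) disappear under \(1\otimes\pi_{*}\) is just an unpacking of that equivariance, and the paper's computation confirms that the Koszul signs do line up as you anticipate.
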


\begin{proof}
  \def\ysum{\!\!\!\beta+\gamma=\alpha\!\!\!}
  \def\yone{y_{\beta}}
  \def\ytwo{y_{\gamma}}
  Let \(y\in\Sl\).
  The claim is trivial for~\(y=1\). For~\(\deg{y}>0\) we have by~\eqref{eq:diag-S} and induction that
  \begin{align}
    \hat\Delta\,\hh(y) &= - \hat\Delta\,S\,\hh(d\,y)
    = - e_{0}\otimes\pi_{*}\,S\,\hh(d\,y) - (S\otimes1)\,\hat\Delta\,\hh(d\,y) \\
    \notag &= \tildeff(1)\otimes \hh(y)
      - (S\otimes1)(\tildeff\otimes\hhbar+\hh\otimes\ffbar)\,\hat\Delta(d\,y) \\
    \notag &= \tildeff(1)\otimes \hh(y)
      - \sum_{\ysum} \bigl(S\,\tildeff\otimes\hhbar+S\,\hh\otimes\ffbar)\,(d\,\yone \otimes \ytwo ) \\
    \notag &= \tildeff(1)\otimes \hh(y)
      + \sum_{\ysum} S\,\tildeff(d\,\yone )\otimes \hhbar(\ytwo )
      - \sum_{\ysum} S\,\hh(d\,\yone )\otimes \ffbar(\ytwo ) \\
    \notag &= \tildeff(1)\otimes \hh(y)
      + \sum_{\substack{\ysum\\\beta\ne0}} \tildeff(\yone )\otimes \hhbar(\ytwo )
      + \sum_{\substack{\ysum\\\beta\ne0}} \hh(\yone )\otimes \ffbar(\ytwo ) \\
    \notag &= \sum_{\ysum} \tildeff(\yone )\otimes \hhbar(\ytwo )
      + \sum_{\ysum} \hh(\yone )\otimes \ffbar(\ytwo ) \\
    \notag &= (\tildeff\otimes\hhbar+ \hh\otimes\ffbar)\,\hat\Delta(y).
  \end{align}
  For~\(1\le i\le\nn\) and \(a\in\bigwedge(x_{1},\dots,x_{i-1})\) we have
  \begin{align}
    \hat\Delta\,\hh(x_{i}\,a\,y) &= \hat\Delta\bigl(b_{i}\cdot \ff(a\,y) - \tildecc_{i}\cdot \hh(a\,y)\bigr) \\
    \notag
    &= (b_{i}\otimes1)\cdot\hat\Delta\,\ff(a\,y) - (\tildecc_{i}\otimes1)\cdot\hat\Delta\,\hh(a\,y) \\
    \notag
    &= (b_{i}\otimes1)\cdot(\ff\otimes\ffbar)\,\hat\Delta(a\,y) - (\tildecc_{i}\otimes1)\cdot(\tildeff\otimes\hhbar+ \hh\otimes\ffbar)\,\hat\Delta(a\,y) \\
    \notag &= (b_{i}\otimes1)\sum_{\ysum}\ff(a\,\yone )\otimes\ffbar(\ytwo ) \\*
    \notag
      &\quad - (\tildecc_{i}\otimes1)\sum_{\ysum}\Bigl((-1)^{\deg{a}}\tildeff(a\,\yone )\otimes\hhbar(\ytwo )+\hh(a\,\yone )\otimes\ffbar(\ytwo )\Bigr) \\
    \notag
    &= \sum_{\ysum}(-1)^{\deg{x_{i} a}}\tildeff(x_{i}\,a\,\yone )\otimes\hhbar(\ytwo ) \\*
    \notag
      &\qquad + \sum_{\ysum} \bigl(b_{i}\cdot\ff(a\,\yone ) -\tildecc_{i}\cdot \hh(a\,\yone ) \bigr)\otimes\ffbar(\ytwo ) \\
    \notag
    &= \bigl(\tildeff\otimes\hhbar + \hh\otimes\ffbar\bigr)\,\hat\Delta(x_{i}\,a\,y). \qedhere
  \end{align}
\end{proof}

\begin{proposition} \( \)
  \label{thm:homotopy-BT}
  The map~\(\hhbar\colon\Sl\to C(BT)\)
  is a coalgebra homotopy from~\(\tildeffbar\) to~\(\ffbar\),
  hence its transpose~\(\hhbar^{*}\colon C^{*}(BT)\to\Sl\)
  an algebra homotopy from~\(\tildeffbar^{*}\) to~\(\ffbar^{*}\).
\end{proposition}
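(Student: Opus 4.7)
The plan is to derive each of the three defining properties of a coalgebra homotopy for $h$ from $f$ to $\tilde f$ by applying $\pi_*\colon C(ET)\to C(BT)$ to the corresponding identities for $H$ already established on $\Kl$, and then restricting to elements $1\otimes y$ with $y\in\Sl$. The statement about the transpose $h^*$ will then be an immediate application of \Cref{thm:dual-coalg-h}, and the coaugmentation condition $h(1)=0$ is trivial from $H(1)=0$.

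The key technical ingredient I will need is the vanishing identity $\pi_*(c\cdot w)=0$ for any $w\in C(ET)$ and any $c\in C(T)$ of positive degree, reflecting the fact that $C(BT)$ realises the coinvariants of the shuffle action of $C(T)$ on $C(ET)$ through the augmentation; equivalently, the composite $\pi\circ\mu$ of the action with the projection factors through the second projection $T\times ET\to ET$, so shuffle products acquire spurious degeneracies that vanish in the normalised chain complex. Granting this, the chain homotopy relation $d\,h(y)=\tilde f(y)-f(y)$ for $y\in\Sl$ follows at once by applying $\pi_*$ to the identity $dH(y)=\tilde F(y)-F(y)-H(dy)$ supplied by \Cref{thm:H-homotopy}: the first two terms produce $\tilde f(y)-f(y)$, while $\pi_*H(dy)$ vanishes because $dy\in\Ll^{>0}\otimes\Sl$ and the recursion~\eqref{eq:def-h-a} expresses $H(dy)$ as a combination of products $b_i\cdot\tilde F(\cdots)$ and $c_i\cdot H(\cdots)$ with $b_i,c_i\in C(T)$ of positive degree.

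For the comultiplicative identity, I will restrict the conclusion of \Cref{thm:coalg-homotopy} to $1\otimes y\in\Kl$ with $y\in\Sl$ and observe that on this subspace the map $\hat\Delta=(1\otimes\pi_*)\Delta_\Kl$ agrees with the standard coproduct $\Delta_\Sl$, since the counit of $\Ll$ annihilates everything but the scalar part of $\Delta_\Ll(1)$. Applying $\pi_*\otimes 1$ to both sides then turns the left-hand side into $(\pi_*\otimes\pi_*)\Delta_{C(ET)}H(y)=\Delta_{C(BT)}h(y)$, while the right-hand side reduces directly to $(f\otimes h+h\otimes\tilde f)\Delta_\Sl y$, which is the required identity.

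The main obstacle is the preliminary vanishing claim for $\pi_*$; once that is in hand, each of the three identities defining a coalgebra homotopy for $h$ is a formal consequence of the corresponding identity already established for $H$.
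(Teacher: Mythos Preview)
Your proposal is correct and follows essentially the same approach as the paper: deduce the three coalgebra-homotopy identities for~\(h\) by restricting the identities for~\(H\) from \Cref{thm:H-homotopy} and \Cref{thm:coalg-homotopy} to~\(1\otimes y\in\Kl\) and applying~\(\pi_{*}\), then invoke \Cref{thm:dual-coalg-h} for the transpose. The paper compresses your vanishing argument for~\(\pi_{*}(c\cdot w)\) with~\(c\in C_{>0}(T)\) into the phrase ``passing to~\(\Sl\)'', but the content is the same.
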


\begin{proof}
  The first part follows from~\eqref{eq:def-h-0}, \Cref{thm:H-homotopy} and \Cref{thm:coalg-homotopy}
  by projecting to~\(C(BT)\) and~\(C(BT)\otimes C(BT)\), respectively, and then passing to~\(\Sl\).
  \Cref{thm:dual-coalg-h} gives the second claim.
\end{proof}

}

\subsection{The homotopy and cup-\texorpdfstring{\(1\)}{1} products}

We now investigate how the algebra homotopy~\(h^{*}\) constructed in the previous section interacts with \(\cupone\)-products.

\def\ll{l}

Let \(\SC\subset C(ET)\) be the graded submodule generated by all chains (in fact, cycles) of the form
\begin{equation}
  \label{eq:def-SC}
  S\,c_{\ll}\,S\,c_{\ll-1} \cdots S\,c_{1}\,e_{0}
\end{equation}
where \(\ll\ge0\) and \(c_{1}\),~\dots,~\(c_{\ll}\) are loops in~\(T\).

\begin{lemma} \( \)
  \label{thm:SC}
  \begin{enumroman}
  \item
    \label{thm:SC-1}
    \(S(\SC)=0\),
  \item
    \label{thm:SC-2}
    \(\Delta(\SC) = \AWu{(12)}(\SC) \subset \SC\otimes\SC\) and \(\AWu{(21)}(\SC) \subset \SC\otimes\SC\),
  \item
    \label{thm:SC-3}
    \(\AWhatu{(121)}(\SC)=0\).
  \end{enumroman}
\end{lemma}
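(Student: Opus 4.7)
The plan is to prove all three parts by induction on the number $\ell$ of loop factors, using the recursive description of $\SC$ as the span of $e_{0}$ (the case $\ell=0$) together with elements of the form $Sc\cdot w$ where $c$ is a loop in $T$ and $w\in\SC$ arises from $\ell-1$ loops. Part~(i) I would dispose of first, since it is immediate and will be crucial as a tool in the other two: $S e_{0}=0$ by~\eqref{eq:properties-S}, and $S(Sc\cdot w)=SS(c\cdot w)=0$ by the identity $SS=0$ from the same display.

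For part~(ii), after checking the trivial base case $\Delta e_{0}=\AWu{(21)}e_{0}=e_{0}\otimes e_{0}$, I would expand $\Delta(Sc\cdot w)$ using~\eqref{eq:diag-S} as $(S\otimes 1)\,\Delta(c\cdot w)+e_{0}\otimes S(c\cdot w)$, and then rewrite $\Delta(c\cdot w)=(c\otimes 1+1\otimes c)\cdot\Delta w$ via~\eqref{eq:equiv-diag-1}. With $\Delta w\in\SC\otimes\SC$ by induction, the summands coming from $1\otimes c$ contain a factor $Sw^{(1)}$ on the left that vanishes by part~(i); those coming from $c\otimes 1$ produce $(Sc\cdot w^{(1)})\otimes w^{(2)}\in\SC\otimes\SC$; and the leftover term $e_{0}\otimes(Sc\cdot w)$ is already in $\SC\otimes\SC$. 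The argument for $\AWu{(21)}$ is structurally identical with the left-right roles exchanged, using~\eqref{eq:AW21-S} and~\eqref{eq:equiv-AW21-1} instead; now the $c\otimes 1$ summands carry the stray $S$ on the right and are killed by part~(i).

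For part~(iii) I would use~\eqref{eq:AW121-S} to rewrite $\AWu{(121)}S(c\cdot w)$ as $-(S\otimes 1)\,\AWu{(121)}(c\cdot w)+(S\otimes 1)\,\AWu{(21)}S(c\cdot w)$, and then compose with $1\otimes\pi_{*}$. The first contribution becomes $-(S\otimes 1)\,\AWhatu{(121)}(c\cdot w)$; the $C(T)$-equivariance of $\AWhatu{(121)}$ recalled after~\eqref{eq:def-hatDelta} reduces this to $-(Sc\otimes 1)\cdot\AWhatu{(121)}w$, which vanishes by induction. The second contribution becomes $(S\otimes\pi_{*})\,\AWu{(21)}(Sc\cdot w)$; here part~(ii), just proved, puts $\AWu{(21)}(Sc\cdot w)$ in $\SC\otimes\SC$, whereupon part~(i) annihilates the left factor. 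The base case $\AWhatu{(121)}e_{0}=0$ is covered by the fact, noted just after~\eqref{eq:def-cupone}, that $\AWu{(121)}$ vanishes on $0$-simplices. The main obstacle is purely clerical: keeping track, in parts~(ii) and~(iii), of which tensor factor retains the stray $S$ so that part~(i) is applied on the correct side; no genuine difficulty lurks beyond the formulas already collected above.
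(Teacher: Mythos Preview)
Your proof is correct and follows essentially the same inductive strategy as the paper. The only minor difference is in part~(ii): you carry out a parallel induction for~\(\AWu{(21)}\) using~\eqref{eq:AW21-S} and~\eqref{eq:equiv-AW21-1}, whereas the paper simply observes that \(\AWu{(21)}=T_{C(X),C(X)}\,\Delta\), so the claim for~\(\AWu{(21)}\) follows immediately from the one for~\(\Delta\) by applying the transposition of tensor factors.
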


\begin{proof}
  Part~\ref{thm:SC-1} follows from~\eqref{eq:properties-S}.
  For part~\ref{thm:SC-2} it suffices to prove the first claim. This is done by induction on the length~\(\ll\)
  of an expression as in~\eqref{eq:def-SC}, the case~\(\ll=0\) being trivial.
  For larger~\(\ll\) we consider an element~\(S\,c\,z\in\SC\) with~\(z\in\SC\).
  By~\eqref{eq:diag-S},~\eqref{eq:equiv-diag-1}, induction and part~\ref{thm:SC-1} we have
  \begin{align}
    \Delta(S\,c\,z) &= (S\otimes 1)\,\Delta(c\,z) + e_{0}\otimes S\,c\,z \\
    \notag &= (S\,c\otimes 1)\,\Delta\,z + (S\otimes c)\,\Delta\,z + e_{0}\otimes S\,c\,z \\
    \notag &= (S\,c\otimes 1)\,\Delta\,z + e_{0}\otimes S\,c\,z \in \SC\otimes\SC.
  \end{align}
  Part~\ref{thm:SC-3} is another induction on~\(\ll\) with a trivial start. This time we have
  \begin{align}
    \AWhatu{(121)}(S\,c\,z) &= - (S\otimes 1)\,\AWhatu{(121)}\,c\,z + (S\otimes \pi_{*})\,\AWu{(21)}(S\,c\,z) \\
    \notag &= - (S\,c\otimes 1)\,\AWhatu{(121)}\,z = 0,
  \end{align}
  where we have used \eqref{eq:AW121-S}, equivariance of~\(\AWhatu{(121)}\),
  parts~\ref{thm:SC-1} and~\ref{thm:SC-2} of the \namecref{thm:SC} and induction.
\end{proof}

Let us write each chain~\(b_{i}\in C_{2}(T)\) introduced in~\eqref{eq:def-bi} as a sum
\begin{equation}
  \label{eq:def-bij}
  b_{i} = \sum_{s} \lambdavs{i}{s}\,\bis{i}{s}
\end{equation}
of \(2\)-simplices~\(\bis{i}{s}\) in~\(T\) with coefficients~\(\lambdavs{i}{s}\in\kk\).
Since \(T\) is connected, we may assume that all three vertices of each~\(\bis{i}{s}\) are equal to~\(1\in T\). (This is automatic for simplicial tori.)
The \(1\)-di\-men\-sional front face~\(\bis{i}{s}^{1}\) and back face~\(\bis{i}{s}^{2}\) of~\(\bis{i}{s}\) are then again loops.

\begin{lemma}
  The following identities hold for any~\(y\in\Sl\).
  \label{thm:SC-FH}
  \begin{enumroman}
  \item
    \label{thm:SC-FH-1}
    \(F(y)\in\SC\),
  \item
    \label{thm:SC-FH-2}
    \((1\otimes S)\,\AWu{(21)}\,H(y)=0\),
  \item
    \label{thm:SC-FH-3}
    \((S\otimes 1)\,\AWu{(21)}\,H(y) \in \SC\otimes\SC\),
  \item
    \label{thm:SC-FH-4}
    \(\AWhatu{(121)}\,H(y) \in \SC\otimes\pi_{*}\,\SC\),
  \item
    \label{thm:SC-FH-5}
    \(\bigl(\AWhatu{(121)}\otimes 1\bigr)\,\AWhatu{(121)}\,H(y)=0\).
  \end{enumroman}
\end{lemma}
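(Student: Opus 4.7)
The plan is to prove all five statements simultaneously by induction on $|y|$, using the recursive definitions of $F$ and $H$, the interaction formulas \eqref{eq:AW21-S}--\eqref{eq:AW121-S}, the $C(T)$-equivariance identities \eqref{eq:equiv-AW21-1}--\eqref{eq:equiv-AW21-2}, and the closure properties collected in \Cref{thm:SC}. The base case $y = 1$ is trivial: $F(1) = e_0 \in \SC$ handles part~(i), while $H(1) = 0$ handles parts~(ii)--(v). For the inductive step, write $y = y_\alpha$ with $|y| > 0$ so that $d y = \sum_{\alpha_i > 0} x_i\,y_{\alpha|i}$ with each $y_{\alpha|i}$ of strictly smaller degree. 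Part~(i) then follows from $F(y) = S\,F(d y) = \sum_i S(c_i \cdot F(y_{\alpha|i}))$, the inductive hypothesis $F(y_{\alpha|i}) \in \SC$, and the very definition of $\SC$. Part~(ii) is immediate from $H(y) = -S\,H(d y)$ combined with the identity $(1 \otimes S)\,\AWu{(21)}\,S = 0$, which follows directly from \eqref{eq:AW21-S} and~\eqref{eq:properties-S}.

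For part~(iii), a similar manipulation would yield $(S \otimes 1)\,\AWu{(21)}\,H(y) = -(S \otimes S)\,\AWu{(21)}\,H(d y)$, and I would then expand $H(d y) = \sum_i\bigl(b_i\cdot\tilde F(y_{\alpha|i}) - c_i\cdot H(y_{\alpha|i})\bigr)$ via \eqref{eq:def-h-a} and analyze the two kinds of summands separately. For a loop summand $c_i\cdot H(y_{\alpha|i})$, \eqref{eq:equiv-AW21-1} splits $\AWu{(21)}$ into $c_i\otimes 1$ and $1\otimes c_i$ pieces; the former is killed by $(S\otimes S)$ through the inductive hypothesis for part~(ii), while the latter lands in $\SC\otimes\SC$ through the inductive hypothesis for part~(iii) together with the defining closure property $S(c\cdot\SC)\subset\SC$. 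For a $2$-simplex summand $b_{i,s}\cdot\tilde F(y_{\alpha|i})$ with $b_i = \sum_s\lambdavs{i}{s}\,b_{i,s}$, I would use $\tilde F(y_{\alpha|i})\in\SC$ (part~(i) applied to $\tilde F$) together with \Cref{thm:SC}\,\ref{thm:SC-2} to see that $\AWu{(21)}\,\tilde F(y_{\alpha|i})\in\SC\otimes\SC$; the three pieces produced by \eqref{eq:equiv-AW21-2} would then be handled by noting that the $b_{i,s}\otimes 1$ and $1\otimes b_{i,s}$ pieces vanish under $(S\otimes S)$ by \Cref{thm:SC}\,\ref{thm:SC-1}, while the surviving $b_{i,s}^{2}\otimes b_{i,s}^{1}$ piece lands in $\SC\otimes\SC$ because both front and back faces of $b_{i,s}$ are loops.

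For part~(iv), combining \eqref{eq:AW121-S} with $\AWhatu{(121)} = (1\otimes\pi_{*})\,\AWu{(121)}$ gives
\begin{equation*}
\AWhatu{(121)}\,H(y) = (S\otimes 1)\,\AWhatu{(121)}\,H(d y) + (S\otimes\pi_{*})\,\AWu{(21)}\,H(y),
\end{equation*}
whose second summand lies in $\SC\otimes\pi_{*}\SC$ by part~(iii). In the first summand, the $C(T)$-equivariance of $\AWhatu{(121)}$ on its first factor kills the $b_i\cdot\tilde F(y_{\alpha|i})$ contributions via $\tilde F(y_{\alpha|i})\in\SC$ and $\AWhatu{(121)}(\SC) = 0$ (\Cref{thm:SC}\,\ref{thm:SC-3}); the $c_i\cdot H(y_{\alpha|i})$ contributions become $(c_i\otimes 1)\,\AWhatu{(121)}\,H(y_{\alpha|i})$, which sit in $\SC\otimes\pi_{*}\SC$ after applying $(S\otimes 1)$ by the inductive hypothesis for part~(iv) and the closure property of $\SC$. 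Part~(v) then comes for free: by~(iv) the first tensor factor of $\AWhatu{(121)}\,H(y)$ lies in $\SC$, so $\AWhatu{(121)}\otimes 1$ annihilates the whole expression by \Cref{thm:SC}\,\ref{thm:SC-3}.

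The main obstacle will be part~(iii), where the $2$-simplex equivariance \eqref{eq:equiv-AW21-2} produces a ``mixed'' piece $b_{i,s}^{2}\otimes b_{i,s}^{1}$ that survives $(S\otimes S)$. Controlling it requires the normalization that all three vertices of $b_{i,s}$ equal $1\in T$, so that both its front and back faces are loops in $T$ and the closure $S(c\cdot\SC)\subset\SC$ applies; in the loop summand the corresponding control comes essentially for free because there is no analogous mixed term.
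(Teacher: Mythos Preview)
Your proposal is correct and follows essentially the same approach as the paper: induction on~\(\deg{y}\), unwinding \(H(y)=-S\,H(dy)\) via the interaction formulas \eqref{eq:AW21-S}--\eqref{eq:AW121-S}, expanding \(H(dy)\) through \eqref{eq:def-h-a}, and dispatching the resulting terms with the equivariance identities \eqref{eq:equiv-AW21-1}--\eqref{eq:equiv-AW21-2} together with \Cref{thm:SC}. Your observation that part~(i) applies equally to~\(\tilde F\) (since the \(\tilde c_{i}\) are also linear combinations of loops) is exactly what the paper uses implicitly when it invokes part~\ref{thm:SC-FH-1} for the \(b_{i}\cdot\tilde F\) summands.
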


\begin{proof}
  We assume \(y=y_{\alpha}\) with~\(\alpha\in\N^{\nn}\).
  All parts are inductions on~\(\deg{\alpha}\) which are trivial for~\(\alpha=0\).

  For~\ref{thm:SC-FH-1} we have by the definition~\eqref{eq:def-f-0}--\eqref{eq:def-f-a} of~\(F\) that
  \begin{equation}
    F(y_{\alpha}) = \sum_{\alpha_{i}>0} S\,F(x_{i}\,y_{\alpha|i}) = \sum_{\alpha_{i}>0} S\,c_{i}\,F(y_{\alpha|i}) \in \SC.
  \end{equation}

  \def\cc{\tilde{c}}
  \def\tildecc{c}
  \def\ff{\tilde{F}}
  \def\ffbar{\tilde{f}}
  \def\tildeff{F}
  \def\tildeffbar{f}

  Using the definition of~\(H\) and the identity~\eqref{eq:AW21-S}, we have
  \begin{align}
    \label{eq:AW21H}
    \AWu{(21)}\,H(y_{\alpha}) &= - \AWu{(21)}\,S\,H(d\,y_{\alpha}) \\
    \notag &= - (1\otimes S)\,\AWu{(21)}\,H(d\,y_{\alpha}) - S\,H(d\,y_{\alpha}) \otimes e_{0}.
  \end{align}
  Together with~\eqref{eq:properties-S} this proves part~\ref{thm:SC-FH-2}.

  We now turn to part~\ref{thm:SC-FH-3}.
  From~\eqref{eq:AW21H},~\eqref{eq:properties-S} and the definition of~\(H\) we get
  \begin{align}
    \MoveEqLeft{(S\otimes 1)\,\AWu{(21)}\,H(y_{\alpha}) = - \sum_{\alpha_{i}>0} (S\otimes S)\,\AWu{(21)}\,H(x_{i}\,y_{\alpha|i}) } \\
    \notag ={} & \sum_{\alpha_{i}>0} (S\otimes S)\,\AWu{(21)}\bigl(\tildecc_{i}\,H(y_{\alpha|i}) - b_{i}\,\ff(y_{\alpha|i})\bigr) \\
  \shortintertext{which by~\eqref{eq:equiv-AW21-1} and~\eqref{eq:equiv-AW21-2} gives}
    \notag ={} & \sum_{\alpha_{i}>0} \bigl( S\,\tildecc_{i}\otimes S + S\otimes S\,\tildecc_{i} \bigr)\,\AWu{(21)}\,H(y_{\alpha|i}) \\
    \notag & \quad - \sum_{\alpha_{i}>0} \Bigl( S\,b_{i}\otimes S + \sum_{s} \lambdavs{i}{s}\,S\,\bis{i}{s}^{2}\otimes S\,\bis{i}{s}^{1} + S\otimes S\,b_{i} \Bigr)\,\AWu{(21)}\,\ff(y_{\alpha|i}) \\
    \notag ={} & \sum_{\alpha_{i}>0} (S\otimes S\,\tildecc_{i})\,\AWu{(21)}\,H(y_{\alpha|i}) \\
    \notag & \qquad - \sum_{\alpha_{i}>0} \sum_{s} \lambdavs{i}{s}\,(S\,\bis{i}{s}^{2}\otimes S\,\bis{i}{s}^{1})\,\AWu{(21)}\,\ff(y_{\alpha|i})
  \end{align}
  by parts~\ref{thm:SC-FH-1} and~\ref{thm:SC-FH-2} as well as \Cref{thm:SC}\,\ref{thm:SC-1}.
  This last expression lies in~\(\SC\otimes\SC\) by induction, part~\ref{thm:SC-FH-1} and \Cref{thm:SC}\,\ref{thm:SC-2}.

  For part~\ref{thm:SC-FH-4} we have by the definition of~\(H\) and~\eqref{eq:AW121-S} that
  \begin{align}
    \MoveEqLeft{ \AWhatu{(121)}\,H(y_{\alpha}) = - \AWhatu{(121)}\,S\,H(d\,y_{\alpha}) } \\
    \notag &\qquad = (S\otimes 1)\,\AWhatu{(121)}\,H(d\,y_{\alpha}) - (S\otimes\pi_{*})\,\AWu{(21)}\,S\,H(d\,y_{\alpha}) \\
    \notag &\qquad = \sum_{\alpha_{i}>0} (S\otimes 1)\,\AWhatu{(121)}\,H(x_{i}\,y_{\alpha|i}) + (S\otimes\pi_{*})\,\AWu{(21)}\,H(y_{\alpha}) .
  \end{align}
  The second term lies in~\(\SC\otimes\pi_{*}\,\SC\) by part~\ref{thm:SC-FH-3}.
  By the definition of~\(H\) and the \(C(T)\)-equivariance of~\(\AWhatu{(121)}\) each term of the sum can be written as
  \begin{multline}
    (S\otimes 1)\,\AWhatu{(121)}\bigl( b_{i}\,\ff(y_{\alpha|i}) - \tildecc_{i}\,H(y_{\alpha|i}) \bigr) \\
    = (S\,b_{i}\otimes 1)\,\,\AWhatu{(121)}\,\ff(y_{\alpha|i}) + (S\,\tildecc_{i}\otimes 1)\,\AWhatu{(121)}\,H(y_{\alpha|i}).
  \end{multline}
  The first term vanishes by part~\ref{thm:SC-FH-1} together with \Cref{thm:SC}\,\ref{thm:SC-3},
  and the second lies in~\(\SC\otimes\pi_{*}\,\SC\) by induction.

  Part~\ref{thm:SC-FH-5} follows from part~\ref{thm:SC-FH-4} and \Cref{thm:SC}\,\ref{thm:SC-3}.
\end{proof}

Note that from \Cref{thm:SC-FH}\,\ref{thm:SC-FH-1} and \Cref{thm:SC}\,\ref{thm:SC-3}
we can again deduce that \(f^{*}\) annihilates all \(\cupone\)-products, compare \cite[Thm.~5.3\,(ii)]{Franz:gersten}.

\begin{proposition} \( \)
  \label{thm:hdual-cup1-BT}
  \begin{enumroman}
  \item For any~\(\alpha\in C^{*}(BT)\) of even degree we have
    \begin{equation*}
      h^{*}(\alpha) = 0.
    \end{equation*}
  \item \label{item_hcup1} For any~\(\alpha\),~\(\beta\in C^{2}(BT)\) we have
    \begin{equation*}
      h^{*}(\alpha\cupone\beta) = - \sum_{i=1}^{\nn}\sum_{s}\lambdavs{i}{s}\,\bigpair{\alpha,\pi_{*}\,S\,\bis{i}{s}^{2}\,e_{0}}\,\bigpair{\beta,\pi_{*}\,S\,\bis{i}{s}^{1}\,e_{0}}\,t_{i}.
    \end{equation*}
  \item For any~\(\alpha\),~\(\beta\),~\(\gamma\in C^{*}(BT)\) we have
    \begin{equation*}
      h^{*}\bigl((\alpha\cupone\beta)\cupone\gamma\bigr) = 0.
    \end{equation*}
  \end{enumroman}
\end{proposition}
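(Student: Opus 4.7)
The plan is to treat the three parts in order: (i) is a degree count, (ii) an explicit unwinding of the recursion defining~$h$ combined with the identities for~$S$ and~$\AWu{(121)}$ on~$ET$, and (iii) a reduction to the vanishing result in \Cref{thm:SC-FH}\,\ref{thm:SC-FH-5}.

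For part~(i), I observe that $H$ is a chain homotopy between the chain maps $F,\tilde F\colon\Kl\to C(ET)$ (which preserve degree once one identifies the cohomological degree on~$\Kl$ with the homological degree on~$C(ET)$), so $H$ itself shifts this degree by~$1$. Consequently $h^{*}$ sends $C^{k}(BT)$ into $\Su^{k-1}$; and since $\Su=\kk[t_{1},\dots,t_{m}]$ is concentrated in even degrees, $h^{*}(\alpha)=0$ for every~$\alpha$ of even degree.

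For part~(ii), since $h^{*}(\alpha\cupone\beta)\in\Su^{2}$, it suffices to pair it with each basis element~$y_{i}$ of~$\Sl^{2}$. From the recursive formulas~\eqref{eq:def-h-0}--\eqref{eq:def-h-a} together with $dy_{i}=x_{i}$ we get $h(y_{i})=-\pi_{*}\,S(b_{i}\,e_{0})$. Using the definition~\eqref{eq:def-cupone} of~$\cupone$ and naturality of $\AWu{(121)}$ under~$\pi$, the pairing becomes
\[
  \bigpair{\alpha\cupone\beta,h(y_{i})}
  =\bigpair{\alpha\otimes\beta,(\pi_{*}\otimes\pi_{*})\,\AWu{(121)}\bigl(S(b_{i}\,e_{0})\bigr)}.
\]
I then expand $\AWu{(121)}(S(b_{i}\,e_{0}))$ via~\eqref{eq:AW121-S} and~\eqref{eq:AW21-S}. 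The contribution coming from $\AWu{(121)}(b_{i}\,e_{0})$ vanishes after applying $1\otimes\pi_{*}$, thanks to the $C(T)$-equivariance of $\AWhatu{(121)}$ together with the fact that $\AWu{(121)}$ annihilates $0$-simplices. The remaining contribution $(S\otimes S)\,\AWu{(21)}(b_{i}\,e_{0})$ is expanded via~\eqref{eq:equiv-AW21-2} with $w=e_{0}$ (using $\AWu{(21)}(e_{0})=e_{0}\otimes e_{0}$) and the decomposition~\eqref{eq:def-bij} of~$b_{i}$. The identities $SS=0$ and $S(e_{0})=0$ eliminate all resulting summands except $-\sum_{s}\lambdavs{i}{s}\,\pi_{*}S(\bis{i}{s}^{2}\,e_{0})\otimes\pi_{*}S(\bis{i}{s}^{1}\,e_{0})$. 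Pairing with $\alpha\otimes\beta$ and collecting the $t_{i}$'s yields the formula in~(ii).

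For part~(iii), iterating~\eqref{eq:def-cupone} gives
\[
  \bigpair{(\alpha\cupone\beta)\cupone\gamma,z}
  =\bigpair{\alpha\otimes\beta\otimes\gamma,(\AWu{(121)}\otimes 1)\,\AWu{(121)}\,z}
\]
for any $z\in C(BT)$. Setting $z=h(y)=\pi_{*}H(y)$ for $y\in\Sl$ and pushing the three factors of $\pi_{*}$ past the $\AWu{(121)}$'s by naturality rewrites the right-hand side as a pairing against $(\pi_{*}\otimes 1\otimes 1)\,(\AWhatu{(121)}\otimes 1)\,\AWhatu{(121)}\,H(y)$, which vanishes by \Cref{thm:SC-FH}\,\ref{thm:SC-FH-5}. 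Hence $h^{*}((\alpha\cupone\beta)\cupone\gamma)$ vanishes on all of~$\Sl$, proving~(iii).

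The main technical step is part~(ii): each unwanted summand in the expansion of $(\pi_{*}\otimes\pi_{*})\,\AWu{(121)}(S(b_{i}\,e_{0}))$ must be seen to die by one of $SS=0$, $S(e_{0})=0$, or $\AWu{(121)}(e_{0})=0$, leaving exactly the cross term that produces the stated formula. One must also keep track of the sign coming from~\eqref{eq:def-h-S} and the convention~\eqref{eq:def-cupone}.
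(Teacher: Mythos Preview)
Your proposal is correct and follows essentially the same route as the paper: part~(i) is the degree argument via the even concentration of~$\Su$, part~(iii) is reduced to \Cref{thm:SC-FH}\,\ref{thm:SC-FH-5}, and part~(ii) is obtained by computing $H(y_{i})=-S\,b_{i}\,e_{0}$ and expanding $\AWhatu{(121)}\,S\,b_{i}\,e_{0}$ with \eqref{eq:AW121-S}, \eqref{eq:AW21-S}, \eqref{eq:equiv-AW21-2} and the identities $SS=0$, $S\,e_{0}=0$, just as the paper does. Your caveat about tracking the signs from \eqref{eq:def-h-S}, \eqref{eq:def-cupone} and the transpose convention is warranted: the Koszul sign in $(S\otimes S)$ applied to the degree-$1$ term~$b_{i,s}^{2}\,e_{0}$ flips the sign of the surviving cross term, and the paper's sign convention for~$h^{*}$ introduces a further sign in the pairing~$\bigpair{h^{*}(\alpha\cupone\beta),y_{i}}=-\bigpair{\alpha\cupone\beta,h(y_{i})}$; once these are inserted your computation matches the stated formula exactly.
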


\begin{proof}
  The first claim follows from the fact that \(H^{*}(BT)\) is concentrated in even degrees,
  and the third one is an immediate consequence of \Cref{thm:SC-FH}\,\ref{thm:SC-FH-5}.
  The second one means
  \begin{equation}
    \bigpair{h^{*}(\alpha\cupone\beta),y_{i}} = - \sum_{s}\lambdavs{i}{s}\,\bigpair{\alpha,\pi_{*}\,S\,\bis{i}{s}^{2}\,e_{0}}\,\bigpair{\beta,\pi_{*}\,S\,\bis{i}{s}^{1}\,e_{0}}
  \end{equation}
  for any~\(1\le i\le\nn\). To establish this identity, we need some preparations.

  By~\eqref{eq:AW21-S},~\eqref{eq:properties-S} and~\eqref{eq:equiv-AW21-2} we have
  \begin{align}
    \MoveEqLeft{(S\otimes 1)\,\AWu{(21)}\,S\,b_{i}\,e_{0} = (S\otimes S)\,\AWu{(21)}\,b_{i}\,e_{0}} \\
    \notag &\quad = (S\otimes S)\Bigl(b_{i}\otimes 1 - \sum_{s} \lambdavs{i}{s}\,\bis{i}{s}^{2}\otimes \bis{i}{s}^{1}+1\otimes b_{i}\Bigr)(e_{0}\otimes e_{0}) \\
    \notag &\quad = \sum_{s} \lambdavs{i}{s}\,S\,\bis{i}{s}^{2}\,e_{0}\otimes S\,\bis{i}{s}^{1}\,e_{0}.
  \end{align}
  From this, \eqref{eq:AW121-S} and the equivariant of~\(\AWhatu{(121)}\) we get
  \begin{align}
    \AWhatu{(121)}\,S\,b_{i}\,e_{0} &= - (S\otimes 1)\,\AWhatu{(121)}\,b_{i}\,e_{0} + (S\otimes\pi_{*})\,\AWu{(21)}\,S\,b_{i}\,e_{0} \\
    \notag &= - (S\,b_{i}\otimes 1)\,\AWhatu{(121)}\,e_{0} + (S\otimes\pi_{*})\,\AWu{(21)}\,S\,b_{i}\,e_{0} \\
    \notag &= \sum_{s} \lambdavs{i}{s}\,S\,\bis{i}{s}^{2}\,e_{0}\otimes \pi_{*}\,S\,\bis{i}{s}^{1}\,e_{0}.
  \end{align}

  We have
  \( 
    H(y_{i}) = - S\,h(x_{i}) = - S\,b_{i}\,e_{0}.
  \) 
  Combining the preceding computation with the definition~\eqref{eq:def-cupone} of the~\(\cupone\)-product
  and that of the transpose~\cite[eq.~(2.4)]{Franz:gersten}, we finally get
  \begin{align}
    \bigpair{h^{*}(\alpha\cupone\beta),y_{i}} &= - \bigpair{\alpha\cupone\beta,h(y_{i})} = + \bigpair{\AWu{(121)}^{*}(\alpha\otimes\beta),\pi_{*}\,H(y_{i})} \\
    \notag &= - \bigpair{\alpha\otimes\beta,(\pi_{*}\otimes 1)\,\AWhatu{(121)}\,S\,b_{i}\,e_{0}} \\
    \notag &= - \sum_{s}\lambdavs{i}{s}\,\bigpair{\alpha,\pi_{*}\,S\,\bis{i}{s}^{2}\,e_{0}}\,\bigpair{\beta,\pi_{*}\,S\,\bis{i}{s}^{1}\,e_{0}},
  \end{align}
  as claimed.
\end{proof}

\section{Davis--Januszkiewicz spaces}
\label{sec:DJ}

As in \Cref{sec:morph}, let \(\posetB\) be a simplicial poset on the vertex set~\(V\),
and let \(T=(S^{1})^{V}\) be a simplicial torus of rank~\(|V|=m\).
Suppressing the folding map~\(\posetB\to V\), we write~\(T_{\sigma}=(S^{1})^{\sigma}\subset T\) for~\(\sigma\in\posetB\).
We can then express the simplicial Davis--Januszkiewicz space~\eqref{eq:def-DJ} associated to~\(\posetB\) as
\begin{equation}
  \DJ_{\posetB} = \colim_{\sigma\in\posetB} BT_{\sigma} \,;
\end{equation}
it comes with a canonical map~\(\DJ_{\posetB}\to BT\).

We fix a representative~\(c\in C_{1}(S^{1})\), which leads to canonical representatives~\(c_{v}\in C_{1}(T_{v})\subset C_{1}(T)\) for all~\(v\in V\).
The dgc quasi-iso\-mor\-phisms~\(f_{\sigma}\colon H(BT_{\sigma})\to C(BT_{\sigma})\) given by \Cref{thm:f-dgc}
are natural with respect to inclusions~\(\tau\hookrightarrow\sigma\) of simplices and therefore combine to a map
\begin{equation}
  f_{\posetB,c} \colon H(\DJ_{\posetB}) = \colim_{\sigma\in\posetB}H(BT_{\sigma})
  \to \colim_{\sigma\in\posetB} C(BT_{\sigma}) = C(\DJ_{\posetB}).
\end{equation}
We add the subscript~``\(c\)'' to remind ourselves that the map depends on this representative.
The map~\(f_{\posetB,c}\) is again a quasi-iso\-mor\-phism of dgcs, and its transpose
\begin{equation}
  f_{\posetB,c}^{*}\colon C^{*}(\DJ_{\posetB}) \to H^{*}(\DJ_{\posetB}) = \kk[\posetB]
\end{equation}
a quasi-iso\-mor\-phism of dgas (in fact, of hgas), see~\cite[Thm.~6.2]{Franz:gersten}.

Here \(\kk[\posetB]\) denotes the evenly graded face ring of the simplicial poset~\(\posetB\)
with coefficients in~\(\kk\), see~\cite[Sec.~3.5]{BuchstaberPanov:2015}.
It is the limit of the polynomial rings~\(\kk[\sigma]=H^{*}(BT_{\sigma})\) over all~\(\sigma\in\posetB\).
As a \(\kk\)-algebra, \(\kk[\posetB]\) is generated by elements~\(t_{\sigma}\) corresponding to the simplices~\(\sigma\in\posetB\).
For any~\(\tau\in\posetB\), the restriction of~\(t_{\sigma}\in\kk[\posetB]\) to~\(\kk[\tau]\) equals
\begin{equation}
  \prod_{v\in\sigma} t_{v} \in \kk[\tau] = \kk[\,t_{v}\,|\,v\in\tau\,]
\end{equation}
if \(\sigma\le\tau\) and \(0\) otherwise.
If \(\posetB\) is a simplicial complex, then \(\kk[\posetB]\) is already generated
by the generators~\(t_{v}\) corresponding to the vertices~\(v\) in~\(\posetB\).
Sometimes we also write down a generator~\(t_{v}=0\in\kk[\posetB]\) for a ghost vertex~\(v\in V\).
The canonical map~\(\kk[V]=H^{*}(BT)\to H^{*}(\DJ_{\posetB})=\kk[\posetB]\)
sends \(t_{v}\in\kk[V]\) to~\(t_{v}\in\kk[\posetB]\) for any~\(v\in V\).
A \(\kk\)-basis for~\(\kk[\posetB]\) is given by the standard monomials
\begin{equation}
  t_{\bfsigma}^{\ii} = t_{\sigma_{1}}^{i_{1}}\cdots t_{\sigma_{k}}^{i_{k}}
\end{equation}
where \(\bfsigma=(\sigma_{1}<\dots<\sigma_{k})\) is a chain in~\(\posetB\) of length~\(k\ge0\)
and \(\ii=(i_{1},\dots,i_{k})\) are strictly positive exponents.

Let \((\hatA,\nu)\colon(\hatNA,\posetA)\to(\hatNB,\posetB)\) be a toric morphism of simplicial posets,
and let \((\hatawv{\vB}{\vA})\in\Z^{\VB\times\VB}\) be the matrix representing \(\hatA\).
The former induces a morphism
\begin{equation}
  \hatphi\colon \DJ_{\posetA} \to \DJ_{\posetB}
\end{equation}
between the associated Davis--Januszkiewicz spaces by \Cref{thm:pp-toric-natural}.
For completeness we describe the induced map~\(\hatphi^{*}\colon\kk[\posetB]\to\kk[\posetA]\) between face rings.

\begin{proposition}
  The image under the map~\(\hatphi^{*}\) of the generator~\(t_{\tau}\in\kk[\posetB]\)
  corresponding to a simplex~\(\tau\in\posetB\) is the linear combination
  \begin{equation*}
    \hatphi^{*}(t_{\tau}) = \!\! \sum_{\tau\le\nu(\sigma_{k})} \!\!
    C(\tau;\bfsigma,\ii)\, t_{\sigma_{1}}^{i_{1}}\cdots t_{\sigma_{k}}^{i_{k}}
  \end{equation*}
   of standard monomials in~\(\kk[\posetA]\), where the sum in
  \begin{equation*}
    C(\tau;\bfsigma,\ii) = \sum_{j} \prod_{\vB\in V(\tau)}\hatawv{\vB}{j(\vB)}
  \end{equation*}
  extends over all maps~\(j\colon V(\tau)\to V(\sigma_{k})\) between vertex sets such that
  \begin{equation*}
    j(V(\tau)) = i_{1}\,V(\sigma_{1}) + \dots + i_{k}\,V(\sigma_{k})
  \end{equation*}
  as multi-sets.
  In particular, for a vertex~\(\tau=\vB\) of~\(\posetB\) we have
  \begin{equation*}
    \hatphi^{*}(t_{\vB}) = \sum_{\vA\in\VA} \hatawv{\vB}{\vA}\,t_{\vA},
  \end{equation*}
  and this determines \(\hatphi^{*}\) completely if \(\posetB\) is a simplicial complex.
\end{proposition}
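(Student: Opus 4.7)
The strategy is to exploit that $\kk[\Sigma']$ embeds into $\prod_{\sigma\in\Sigma'}\kk[\sigma]$ through the restriction maps, so an element is determined by its images in every chart $\kk[\sigma]$, and to use that the standard monomials form a $\kk$-basis to read off the expansion. Accordingly, I would compute $\bigat{\hatphi^{*}(t_{\tau})}_{\kk[\sigma]}$ for each $\sigma\in\Sigma'$ and then reorganize the result as a sum of standard monomials.

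For the local computation, \Cref{thm:pp-toric-natural} applied to the pair $(BS^{1},b_{0})$ says that on each $\sigma\in\Sigma'$, the map $\hatphi$ restricts to the morphism of simplicial monoids $BT_{\sigma}=(BS^{1},b_{0})^{\sigma}\to(BS^{1},b_{0})^{\nu(\sigma)}=BT_{\nu(\sigma)}$ given by the submatrix of $(\hatawv{v}{v'})$ with row indices $v\in V(\nu(\sigma))$ and column indices $v'\in V(\sigma)$. On cohomology this is the polynomial ring map $\kk[\nu(\sigma)]\to\kk[\sigma]$ with $t_{v}\mapsto\sum_{v'\in V(\sigma)}\hatawv{v}{v'}\,t_{v'}$. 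Since $\bigat{t_{\tau}}_{\kk[\nu(\sigma)]}=\prod_{v\in V(\tau)}t_{v}$ when $\tau\le\nu(\sigma)$ and vanishes otherwise, expanding the product gives
\begin{equation*}
  \bigat{\hatphi^{*}(t_{\tau})}_{\kk[\sigma]}=\sum_{j\colon V(\tau)\to V(\sigma)}\Bigl(\prod_{v\in V(\tau)}\hatawv{v}{j(v)}\Bigr)\prod_{v'\in V(\sigma)}t_{v'}^{|j^{-1}(v')|}
\end{equation*}
whenever $\tau\le\nu(\sigma)$.

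To recognize this as the restriction of a sum of standard monomials, I associate to each $j$ the multiset $M_{j}$ on $V(\sigma)$ with multiplicity $|j^{-1}(v')|$ at $v'$. If the distinct positive values of these multiplicities are $m_{1}<\dots<m_{k}$, then the nested subsets $W_{l}=\{v'\in V(\sigma):|j^{-1}(v')|\ge m_{l}\}$ are, by the Boolean structure of the faces of $\sigma$ in the simplicial poset $\Sigma'$, the vertex sets of unique sub-simplices $\sigma_{1}<\dots<\sigma_{k}\le\sigma$. Setting $i_{l}=m_{l}-m_{l-1}$, the monomial $\prod_{v'\in V(\sigma)}t_{v'}^{|j^{-1}(v')|}$ is precisely the restriction of the standard monomial $t_{\sigma_{1}}^{i_{1}}\cdots t_{\sigma_{k}}^{i_{k}}$ to $\kk[\sigma]$, and the identity $j(V(\tau))=i_{1}V(\sigma_{1})+\dots+i_{k}V(\sigma_{k})$ defining $C(\tau;\bfsigma,\ii)$ records exactly which $j$'s produce a given pair $(\bfsigma,\ii)$. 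Regrouping the displayed sum by $(\bfsigma,\ii)$ then matches the restriction to $\kk[\sigma]$ of the claimed right-hand side; cutting the outer sum off at $\tau\le\nu(\sigma_{k})$ is harmless, because when $\tau\not\le\nu(\sigma_{k})$ some $v\in V(\tau)$ lies outside $V(\nu(\sigma_{k}))$, and since $\nu(j(v))\le\nu(\sigma_{k})$ the factor $\hatawv{v}{j(v)}$ vanishes by the support property recorded after~\eqref{eq:hatA and nu}. Because this agreement holds on every chart and standard monomials form a basis, the formula holds in $\kk[\Sigma']$.

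The vertex case $\tau=v$ follows at once: $V(\tau)=\{v\}$ is a singleton, so each $j$ is a choice of $v'\in V(\sigma_{k})$, forcing $k=1$, $V(\sigma_{1})=\{v'\}$, $i_{1}=1$, and $C(v;(v'),(1))=\hatawv{v}{v'}$. When $\Sigma$ is a simplicial complex or a regular fan, $\kk[\Sigma]$ is generated as a $\kk$-algebra by the vertex generators $t_{v}$, so the algebra homomorphism $\hatphi^{*}$ is determined by its values on them. I expect the main technical obstacle to be the combinatorial bookkeeping in the third paragraph: verifying that $j\mapsto(\bfsigma,\ii)$ is a bijection onto the standard-monomial data with $\sigma_{k}\le\sigma$, and that the range $\tau\le\nu(\sigma_{k})$ captures exactly the non-vanishing contributions.
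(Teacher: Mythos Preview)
Your approach is essentially the paper's own: compute the map chart by chart using that on~\(BT_{\sigma}\to BT_{\nu(\sigma)}\) the induced map is \(t_{v}\mapsto\sum_{v'}\hatawv{v}{v'}\,t_{v'}\), expand the product \(\prod_{v\in V(\tau)}\hatphi_{\sigma}^{*}(t_{v})\), and match the result against standard monomials. The paper streamlines this slightly by reading off the coefficient of a fixed standard monomial \(t_{\bfsigma}^{\ii}\) from the single chart \(\sigma=\sigma_{k}\), rather than checking all charts, but the content is the same.

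There is one bookkeeping slip in your third paragraph. With \(m_{1}<\dots<m_{k}\) the level sets \(W_{l}=\{v':|j^{-1}(v')|\ge m_{l}\}\) are \emph{decreasing}, \(W_{1}\supsetneq\dots\supsetneq W_{k}\), so the corresponding faces satisfy \(\sigma_{1}>\dots>\sigma_{k}\), not \(\sigma_{1}<\dots<\sigma_{k}\). Concretely, the restriction of \(t_{\sigma_{1}}^{i_{1}}\cdots t_{\sigma_{k}}^{i_{k}}\) to~\(\kk[\sigma]\) gives \(t_{v'}\) the exponent \(i_{p}+i_{p+1}+\dots+i_{k}\), where \(p\) is minimal with \(v'\in V(\sigma_{p})\); matching this against the multiplicities forces \(V(\sigma_{k})\) to be the full support (multiplicity \(\ge m_{1}\)) and \(V(\sigma_{1})\) the top level (multiplicity \(\ge m_{k}\)). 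The fix is to reverse the correspondence, taking \(V(\sigma_{l})=W_{k+1-l}\) and \(i_{l}=m_{k+1-l}-m_{k-l}\) (with \(m_{0}=0\)); then everything goes through. Your own caveat about the bookkeeping was well placed.

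One further remark: your argument that \(C(\tau;\bfsigma,\ii)=0\) when \(\tau\not\le\nu(\sigma_{k})\) hinges on ``some \(v\in V(\tau)\) lies outside \(V(\nu(\sigma_{k}))\)''. This is not true in an arbitrary simplicial poset (two simplices can share a vertex set without being comparable), but it \emph{is} true in your setting because you are working inside a chart with \(\tau\le\nu(\sigma)\) and \(\sigma_{k}\le\sigma\): both \(\tau\) and \(\nu(\sigma_{k})\) are then faces of the simplex \(\nu(\sigma)\), and on the Boolean interval below \(\nu(\sigma)\) containment of vertex sets is equivalent to the order. It would strengthen the write-up to say this explicitly.
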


\begin{proof}
  For any simplex~\(\sigma\in\posetA\) the diagram
  \begin{equation}
    \begin{tikzcd}
      H^{*}(\DJ_{\sigma}) & H^{*}(\DJ_{\posetA}) \arrow{l} & H^{*}(B\TA) \arrow{l} \\
      H^{*}(\DJ_{\nu(\sigma)}) \arrow{u}{\hatphi_{\sigma}^{*}} & H^{*}(\DJ_{\posetB}) \arrow{u}{\hatphi^{*}} \arrow{l} & H^{*}(B\TB) \arrow{u}{\hatphi^{*}} \arrow{l}
    \end{tikzcd}
  \end{equation}
  commutes by \Cref{thm:ZZSigma-DJSigma-toric-morph}.
  Taking \(\tau=\vB\) to be a vertex, the commutativity of the right square proves the formula for~\(\hatphi^{*}(t_{\vB})\)
  because we know it to hold in~\(H^{*}(B\TA)\).
  If \(\posetB\) is a simplicial complex, then the elements~\(t_{\vB}\) generate \(\kk[\posetB]\),
  so that \(\hatphi^{*}\) is determined by their images.

  We turn to the general case of a standard monomial~\(t_{\bfsigma}^{\ii}\in\kk[\posetA]\)
  corresponding to~\(\bfsigma=(\sigma_{1}<\dots<\sigma_{k})\) and \(\ii=(i_{1},\dots,i_{k})\).
  By what we have said above, \(t_{\bfsigma}^{\ii}\) restricts to~\(0\in\kk[\sigma]\) unless \(\sigma_{k}\le\sigma\).
  In this latter case the restriction is given by
  \begin{equation}
    \prod_{\vA\in\sigma} t_{\vA}^{j_{\vA}}
  \end{equation}
  where
  \begin{equation}
    \bigcup_{\vA\in\sigma} j_{\vA}\,\{\vA\} = i_{1}\,V(\sigma_{1}) + \dots + i_{k}\,V(\sigma_{k})
  \end{equation}
  as multi-sets.
  
  If \(\tau\not\le\nu(\sigma_{k})\), then by taking \(\sigma=\sigma_{k}\) in the diagram above
  we see that \(t_{\bfsigma}^{\ii}\) does not appear in~\(\hatphi^{*}(t_{\tau})\).
  Otherwise the claimed formula for~\(C(\tau;\bfsigma,\ii)\) follows by multiplying out the product
  \begin{equation}
    \hatphi_{\sigma}^{*}(t_{\tau}) = \prod_{\vB\in\tau} \hatphi_{\sigma}^{*}(t_{\vB}) \in \kk[\sigma].
    \qedhere
  \end{equation}
\end{proof}

\begin{remark}
  \label{rem:pullback}
  In the case of smooth (or just \(\kk\)-smooth) toric varieties, one can identify \(\kk[\posetB]\)
  with the piecewise polynomials on the fan~\(\Sigma\). The generator~\(t_{v}\) associated to the ray~\(v\in\Sigma\)
  corresponds to the piecewise linear `Courant function' that evaluates to~\(1\) on~\(x_{v}\) and to~\(0\)
  on all~\(x_{w}\) with~\(w\ne v\). In this picture, the function~\(\hatphi^{*}\) is the pull-back
  of piecewise polynomials from~\(\SigmaB\) to~\(\SigmaA\), see~\cite[Secs.~1 \& 2]{Brion:1996}.
\end{remark}

The main goal of this section is to establish that the diagram
\begin{equation}
  \label{eq:diag-hstar-DJ}
  \begin{tikzcd}
    C^{*}(\DJ_{\posetB}) \arrow{d}[left]{f_{\posetB,\cB}^{*}} \arrow[dashed]{rd}{h^{*}} \arrow{r}{\hatphi^{*}} & C^{*}(\DJ_{\posetA}) \arrow{d}{f_{\posetA,\cA}^{*}} \\
    H^{*}(\DJ_{\posetB}) \arrow{r}{\hatphi^{*}} & H^{*}(\DJ_{\posetA})
  \end{tikzcd}
\end{equation}
commutative up to an algebra homotopy~\(h^{*}\).

\def\fprime{f}
  
We start by considering the diagram
\begin{equation}
  \begin{tikzcd}
    H(B\TA_{\sigma}) \arrow{d}[left]{f_{\sigma}} \arrow[dashed]{rd}{h_{\sigma}} \arrow{r}{(\hatphi_{\sigma})_{*}} & H(B\TB_{\nu(\sigma)}) \arrow{d}{\fprime_{\nu(\sigma)}} \\
    C(B\TA_{\sigma}) \arrow{r}{(\hatphi_{\sigma})_{*}} & C(B\TB_{\nu(\sigma)})
  \end{tikzcd}
\end{equation}
for a simplex~\(\sigma\in\posetA\).
In this case we obtain a coalgebra homotopy
\begin{equation}
  h_{\sigma}\colon H(B\TA_{\sigma}) \to C(B\TB_{\nu(\sigma)})
\end{equation}
as follows:
For each~\(\vA\in\sigma\) and~\(\tau=\nu(\vA)\) the cycles
\begin{equation}
  \check{c}_{\vA} = (\hatphi_{\vA})_{*}(c_{\vA})
  \qquad\text{and}\qquad
  \hat{c}_{\vA} = \sum_{\vB\in\tau} \hatawv{\vB}{\vA}\,\cB_{\vB}
\end{equation}
in~\(C_{1}(\TB_{\tau})\) represent the same homology class~\((\hatphi_{\sigma})_{*}(x_{\vA})\in H_{1}(\TB_{\tau})\).
We choose some \(b_{\vA}\in C_{2}(\TB_{\tau})\) such that
\begin{equation}
  \label{eq:def-bv}
  d\,b_{\vA} = \check{c}_{\vA} - \hat{c}_{\vA}.
\end{equation}
Note that there is no need to chose an element~\(b_{\vA}\) if \(\vA\) is a ghost vertex
since ghost vertices do not manifest themselves in the associated Davis--Januszkiewicz space.

The composition
\begin{equation}
  \check{f}_{\sigma}=(\hatphi_{\sigma})_{*}\,f_{\sigma}\colon H(B\TA_{\sigma}) \to C(B\TB_{\nu(\sigma)})
\end{equation}
is the formality map determined by the cycles~\(\check{c}_{\vA}\) with~\(\vA\in\sigma\),
and
\begin{equation}
  \hat{f}_{\sigma}=\fprime_{\nu(\sigma)}\,(\hatphi_{\sigma})_{*}
\end{equation}
the one for the cycles~\(\hat{c}_{\vA}\).
Based on the~\(b_{\vA}\)'s for the ordered vertices~\(\vA\in\sigma\), \Cref{thm:homotopy-BT} gives us a coalgebra homotopy~\(h_{\sigma}\)
from~\(\hat{f}_{\sigma}\) to~\(\check{f}_{\sigma}\).

By construction, these homotopies~\(h_{\sigma}\) for all~\(\sigma\in\posetA\)
are natural with respect to inclusion of simplices.
They therefore assemble to a map
\begin{equation}
  \label{eq:def-h-SigmaA}
  h = \colim_{\sigma\in\posetA} h_{\sigma}\colon H(\DJ_{\posetA})
  \to
  C(\DJ_{\posetB}),
\end{equation}
which is again a coalgebra homotopy, namely from~\(f_{\posetB,\cB}\,\hatphi_{*}\) to~\(\hatphi_{*}\,f_{\posetA,\cA}\).
An application of \Cref{thm:dual-coalg-h} finally gives the desired result.

\begin{proposition}
  \label{thm:homotopy-DJ}
  The transpose
  \begin{equation*}
    h^{*}\colon C^{*}(\DJ_{\posetB})\to H^{*}(\DJ_{\posetA})
  \end{equation*}
  of~\eqref{eq:def-h-SigmaA} is an algebra homotopy
  from~\(\hatphi^{*}\,f_{\posetB,\cB}^{*}\) to~\(f_{\posetA,\cA}^{*}\,\hatphi^{*}\)
  as in the diagram~\eqref{eq:diag-hstar-DJ}.
\end{proposition}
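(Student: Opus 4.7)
The plan is to assemble the pointwise coalgebra homotopies $h_\sigma$, already produced by \Cref{thm:homotopy-BT}, into a single coalgebra homotopy on the colimit, and then dualize via \Cref{thm:dual-coalg-h}. Essentially all the work has been done in the construction preceding the statement; what remains is to verify compatibility under face inclusions and to assemble.

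First I would verify that the family $(h_\sigma)_{\sigma\in\SigmaA}$ is natural with respect to inclusions of simplices $\tau\le\sigma$ in~$\SigmaA$, meaning that $h_\tau$ agrees with the restriction of $h_\sigma$ along $H(B\TA_\tau)\hookrightarrow H(B\TA_\sigma)$ modulo the inclusion $C(B\TB_{\nu(\tau)})\hookrightarrow C(B\TB_{\nu(\sigma)})$. This follows because the recursion \eqref{eq:def-h-0}--\eqref{eq:def-h-a} defining $h_\sigma$ uses only the chains $b_{\vA}$ and $\hat{c}_{\vA}$ indexed by $\vA\in\sigma$, together with the formality map $f_\sigma$ which is itself natural by construction. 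Since the data $b_{\vA}$ in \eqref{eq:def-bv} are fixed at the level of vertices in $V$, independently of the containing simplex, the inductive formula yields the same output on~$H(B\TA_\tau)$ whether computed from~$h_\tau$ or from~$h_\sigma$.

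Next, once this compatibility is in place, the colimit $h=\colim_\sigma h_\sigma$ defined in \eqref{eq:def-h-SigmaA} makes sense, and the three identities \eqref{eq:def-coalg-h} are inherited levelwise: the differential, comultiplication and coaugmentation on $H(\DJ_{\SigmaA})$ and $C(\DJ_{\SigmaB})$ are themselves obtained as colimits over the same indexing poset, so each clause of \eqref{eq:def-coalg-h} passes to the colimit from the corresponding clause for $h_\sigma$. The two endpoints of this colimit homotopy assemble to $\colim\hat{f}_\sigma=f_{\SigmaB,\cB}\,\hatphi_*$ and $\colim\check{f}_\sigma=\hatphi_*\,f_{\SigmaA,\cA}$.

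Finally, an application of \Cref{thm:dual-coalg-h} to $h$ produces an algebra homotopy $h^*\colon C^*(\DJ_{\SigmaB})\to H^*(\DJ_{\SigmaA})$; since the transpose reverses the order of composition, its endpoints are $\hatphi^*\,f_{\SigmaB,\cB}^*$ and $f_{\SigmaA,\cA}^*\,\hatphi^*$, which is the assertion. The only substantive obstacle is the naturality check above; the remaining steps are formal manipulations with colimits in the category of coaugmented dgcs and the transposition functor.
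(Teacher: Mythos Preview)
Your proposal is correct and follows essentially the same approach as the paper: the paper also observes that the homotopies~$h_\sigma$ are natural with respect to simplex inclusions (by construction, since the data~$b_{\vA}$ are fixed at the vertex level), assembles them into the colimit coalgebra homotopy~\eqref{eq:def-h-SigmaA}, and then invokes \Cref{thm:dual-coalg-h}. Your elaboration of the naturality check is slightly more explicit than the paper's one-line remark, but the argument is the same.
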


As in~\eqref{eq:def-bij}, each chain~\(b_{\vA}\in C_{2}(\TB_{\tau})\) chosen above
can be written as a sum of \(2\)-simplices with coefficients in~\(\kk\),
\begin{equation}
  b_{\vA} = \sum_{s} \lambdavs{\vA}{s}\,\bis{\vA}{s}.
\end{equation}
Since we are in the simplicial setting, each \(1\)-dimensional front face~\(\bis{\vA}{s}^{1}\)
and back face~\(\bis{\vA}{s}^{2}\) of~\(\bis{\vA}{s}\) is a loop in~\(\TB_{\tau}\).
Let us introduce the \(2\)-simplices
\begin{equation}
  \label{eq:def-wvj}
  \wwonevs{\vA}{s} = \piB_{*}\,S\,\bis{\vA}{s}^{1}\,e_{0},
  \qquad\text{and}\qquad
  \wwtwovs{\vA}{s} = \piB_{*}\,S\,\bis{\vA}{s}^{2}\,e_{0}
\end{equation}
in~\(B\TB_{\tau} \subset \DJ_{\posetB} \subset B\TB\).

\begin{corollary}
  \label{thm:hdual-cup1-DJ}
  We continue to use the notation introduced above.
  \begin{enumroman}
  \item For any~\(\alpha\in C^{*}(\DJ_{\posetB})\) of even degree we have
    \begin{equation*}
      h^{*}(\alpha) = 0.
    \end{equation*}
  \item
    \label{thm:hdual-cup1-DJ-2}
    For any~\(\alpha\),~\(\beta\in C^{2}(\DJ_{\posetB})\)
    \begin{equation*}
      h^{*}(\alpha\cupone\beta) = - \sum_{\vA\in\posetA}\sum_{s}\lambdavs{\vA}{s}\,\bigpair{\alpha,\wwtwovs{\vA}{s}}\,\bigpair{\beta,\wwonevs{\vA}{s}}\,t_{\vA}
    \end{equation*}
    where the first sum extends over the non-ghost vertices in~\(\VA\).
  \item For any~\(\alpha\),~\(\beta\),~\(\gamma\in C^{*}(\DJ_{\posetB})\) we have
    \begin{equation*}
      h^{*}\bigl((\alpha\cupone\beta)\cupone\gamma\bigr) = 0.
    \end{equation*}
  \end{enumroman}
\end{corollary}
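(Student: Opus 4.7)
The plan is to reduce each part to the torus-level statement \Cref{thm:hdual-cup1-BT} by pairing with homology classes and exploiting the colimit description of~$h$.

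Part~(i) is purely a degree count. The chain homotopy $h$ has chain degree~$+1$, hence $h^*$ has cohomological degree~$-1$. An even-degree cochain $\alpha$ is therefore sent into odd degree of $H^*(\DJ_{\SigmaA})=\kk[\SigmaA]$, which vanishes because the face ring is concentrated in even degrees.

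For parts~(ii) and~(iii) I would pair the left-hand side with a homology class in $H(\DJ_{\SigmaA})=\colim_{\sigma\in\SigmaA}H(B\TA_\sigma)$. For any $y$ arising from $H(B\TA_\sigma)$, the construction~\eqref{eq:def-h-SigmaA} gives $h(y)=h_\sigma(y)\in C(B\TB_{\nu(\sigma)})\subset C(\DJ_{\SigmaB})$. Hence, modulo signs from the transpose convention of~\cite[eq.~(2.4)]{Franz:gersten}, $\pair{h^*(\xi),y}$ only sees the restriction of $\xi$ to the simplex $B\TB_{\nu(\sigma)}$. The inclusion $B\TB_{\nu(\sigma)}\hookrightarrow\DJ_{\SigmaB}$ is simplicial, hence a morphism of hgas on cochains, so $\cupone$-products commute with this restriction. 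This reduces the whole calculation to the torus pair $(B\TA_\sigma,B\TB_{\nu(\sigma)})$ with homotopy $h_\sigma$, which is precisely the setting in which \Cref{thm:hdual-cup1-BT} applies, using the chains $b_\vA$, $\vA\in\sigma$, chosen in~\eqref{eq:def-bv}.

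For (iii), the third item of \Cref{thm:hdual-cup1-BT} then yields $0$ on every such $y$, so $h^*\bigl((\alpha\cupone\beta)\cupone\gamma\bigr)=0$. For~(ii), in degree~$2$ the face ring $\kk[\SigmaA]$ has as $\kk$-basis the generators $\{t_\vA\}$ indexed by the non-ghost vertices of $\VA$, dual to the classes $y_\vA$ coming from $H_2(B\TA_\vA)$. Pairing with $y_\vA$ reduces to the one-vertex torus $B\TA_\vA$ together with the single chain $b_\vA=\sum_s\lambdavs{\vA}{s}\bis{\vA}{s}$, and the second item of \Cref{thm:hdual-cup1-BT} produces exactly the claimed coefficient $-\sum_s\lambdavs{\vA}{s}\,\pair{\alpha,\wwtwovs{\vA}{s}}\,\pair{\beta,\wwonevs{\vA}{s}}$ of~$t_\vA$. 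Ghost vertices carry no homology class and need no $b_\vA$, so they drop out automatically, matching the range of the sum in the statement.

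The main subtlety, beyond routine bookkeeping of transpose signs, is the naturality fact that $\cupone$ commutes with restriction along $B\TB_{\nu(\sigma)}\hookrightarrow\DJ_{\SigmaB}$; once this is noted, (ii) and (iii) reduce to direct invocations of the torus result.
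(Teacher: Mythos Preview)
Your proof is correct and follows the same approach as the paper, which simply states that the three claims are ``direct translations of \Cref{thm:hdual-cup1-BT} to the present setting.'' You have spelled out what this translation involves: the degree argument for~(i), and for~(ii)--(iii) the reduction to the torus case via the colimit description~\eqref{eq:def-h-SigmaA} of~\(h\), the naturality of~\(\cupone\) along the simplicial inclusion \(B\TB_{\nu(\sigma)}\hookrightarrow\DJ_{\SigmaB}\), and the perfect pairing between \(\kk[\SigmaA]\) and \(H(\DJ_{\SigmaA})\) in each degree.
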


\begin{proof}
  These are direct translations of \Cref{thm:hdual-cup1-BT} to the present setting.
\end{proof}

\section{The main theorem}
\label{sec:main}

\subsection{Review of the multiplicative isomorphism}

We review material from~\cite{Franz:torprod}.
As in \Cref{sec:DJ} we write \(c\in C_{1}(S^{1})\) for chosen representative of~\(1\in H_{1}(S^{1};\Z)=\Z\).

Let \(\parqdef{\posetB}{\KK}\) be a partial quotient. The Koszul complex
\begin{equation}
  \Kl_{\posetB} = H^{*}(L) \otimes \kk[\posetB]
\end{equation}
has been defined in~\eqref{eq:def-Kl}.
Let \(N=H_{1}(L;\Z)\) be the lattice associated to~\(L\), say of rank~\(n\).
We choose a basis~\((x_{i})\) for~\(N\cong\Z^{n}\). Together with the canonical basis~\((e_{v})_{v\in V}\) for~\(\hatN=\Z^{V}=\Z^{m}\)
this allows to express the projection map~\(T\to L\) by the characteristic matrix~\((\xvi{v}{i})\in\Z^{n\times m}\)
where \(v\in V\) and \(1\le i\le n\).

Note that here we are using the simplicial torus~\(L=BN\) instead of the compact torus~\(\LL\) acting on~\(\parq{\posetB}{\KK}\).
As discussed in~\cite[Sec.~3.3]{Franz:torprod}, the cocycles and cohomology classes of~\(L\) in degree~\(1\) as well as those
of its simplicial classifying space~\(BL\) in degree~\(2\) are naturally isomorphic to the group of additive homomorphism from~\(N\) to~\(\kk\),
\begin{equation}
  \label{eq:iso-deg-1-2}
  \Hom(N,\kk) = Z^{1}(L) = H^{1}(L) = Z^{2}(BL) = H^{2}(BL).
\end{equation}

The isomorphism
\begin{equation}
  \Psi_{\posetB,c}\colon H^{*}(\parq{\posetB}{\KK}) \to H^{*}(\Kl_{\posetB}) = \Tor_{H^{*}(BL)}\bigl(\kk,\kk[\posetB]\bigr)
\end{equation}
is the map induced in cohomology by the following zigzag of quasi-iso\-mor\-phism, recalled below:
\begin{multline}
  \label{eq:quisos-X-Kl}
  C^{*}(\parq{\posetB}{\KK})
  \longrightarrow C^{*}(EK\timesunder{K}\ZZ_{\posetB}) \\
  \longleftarrow \BB\bigl(\kk,C^{*}(BL),C^{*}(ET\timesunder{T}\ZZ_{\posetB})\bigr)
  \longleftrightarrow \BB\bigl(\kk,C^{*}(BL),C^{*}(\DJ_{\posetB})\bigr) \\
  \xrightarrow{\BB(1,1,f_{\posetB,c}^{*})} \BB_{c}\bigl(\kk,C^{*}(BL),\kk[\posetB]\bigr)
  \stackrel{\Phi_{\posetB}}{\longleftarrow} \Kl_{\posetB}.
\end{multline}
Note that we add the subscript~``\(c\)'' to the last bar construction
because its differential depends implicitly on the quasi-iso\-mor\-phism~\(f_{\posetB,c}^{*}\colon C^{*}(\DJ_{\posetB})\to\kk[\posetB]\)
given by \Cref{thm:homotopy-DJ} and therefore on the representative~\(c\in C_{1}(S^{1})\).

The first map in~\eqref{eq:quisos-X-Kl} is induced by the composition
\begin{equation}
  \label{eq:Z-Sigma-K-X-Sigma}
  EK\timesunder{K}\ZZ_{\posetB} \to \EEKK\timesunder{\KK}\ZZ_{\posetB} \to \parq{\posetB}{\KK}
\end{equation}
and natural in the spaces and groups involved, see~\cite[Lemma~4.2, eq.~(4.6)]{Franz:torprod}.
The second map is explained in~\cite[Prop.~3.2]{Franz:torprod} and is again natural.
The arrow~``\(\leftrightarrow\)'' abbreviates the zigzag of quasi-iso\-mor\-phisms
induced by the maps displayed in~\eqref{eq:Borel-ZK-DJ}.

Let \(\alpha_{1}\),~\dots,~\(\alpha_{n}\) be the basis for~\(H^{1}(L;\Z)\) dual to the basis~\((x_{i})\) for~\(N\).
As in~\cite[Sec.~6]{Franz:torprod} we let \(\gamma_i\in C^2(BL)\) for~\(1\le i\le n\)
be the cocycle corresponding to~\(\alpha_i\in H^1(L)\) under the isomorphisms~\eqref{eq:iso-deg-1-2}.
The final arrow in~\eqref{eq:quisos-X-Kl} is the map
\begin{align}
  \label{eq:quiso-Kl-baronesided}
  \Phi_{\posetB}\colon \Kl_{\posetB} = H^{*}(L)\otimes \kk[\posetB] &\to \BB_{c}\bigl(\kk, C^{*}(BL), \kk[\posetB]\bigr) \\
  \notag \alpha_{i_{1}}\cdots\alpha_{i_k}\otimes f &\mapsto [\gamma_{i_{1}}]\circ\dots\circ[\gamma_{i_k}]\otimes f
\end{align}
with~\(k\ge0\) and~\(1\le i_{1}<\dots<i_{k}\le n\). It is a quasi-iso\-mor\-phism of complexes, see~\cite[Prop.~6.2]{Franz:torprod}.
Note that it depends on the chosen isomorphism~\(N\cong\Z^{n}\).
However, as a morphism of graded \(\kk\)-modules it is independent of the representative~\(c\).

The first four maps in~\eqref{eq:quisos-X-Kl} are actually multiplicative.
We will only need that the one-sided bar construction~%
\(\BB_{c}(\kk, C^{*}(BL), \kk[\posetB])\) is a dga with product given by~\eqref{eq:prod-BB-onesided-comm}.

Following \cite[Sec.~7]{Franz:torprod},
we endow \(\Kl_{\posetB}\) with the twisted \(*\)-product determined by~\eqref{eq:twisted-prod}, based on elements
\begin{equation}
  \label{eq:def-qij}
  q_{ij} = f_{\posetB,c}^{*}(\gamma_{ij})
\end{equation}
for certain cochains~\(\gamma_{ij}\in C^{2}(BL)\) where~\(1\le j\le i\le n\).
Then \(\Kl_{\posetB}\) becomes a dga and the map~\(\Phi_{\posetB}\)
multiplicative up to a \(\kk[\posetB]\)-bilinear homotopy
\begin{equation}
  \label{eq:H-Kl-Kl-B}
  H\colon \Kl_{\posetB}\otimes\Kl_{\posetB} \to \BB_{c}\bigl(\kk, C^{*}(BL), \kk[\posetB]\bigr),
\end{equation}
that is,
\begin{equation}
  \label{eq:homotopy-twisted-prod}
  (d\,H+H\,d)(\alpha\otimes \beta)=\Phi_{\posetB}(\alpha)\circ \Phi_{\posetB}(\beta) - \Phi_{\posetB}(\alpha*\beta).
\end{equation}

\begin{remark}
  \label{rem:equiv-formal-prod}
  Recall from~\cite[Lemma~3.1]{Franz:torprod} that
  the equivariant cohomology of~\(\parq{\posetB}{\KK}\) with respect to the topological group~\(\LL\)
  is isomorphic to the one with respect to the simplicial group~\(L\).
  Both are isomorphic to the face algebra~\(\kk[\posetB]\) via the zigzag of quasi-iso\-mor\-phisms
  \begin{equation}
    C^{*}(EL\timesunder{L}\parq{\posetB}{\KK})
    \longrightarrow C^{*}(ET\timesunder{T}\ZZ_{\posetB})
    \longleftrightarrow C^{*}(\DJ_{\posetB})
    \xrightarrow{f_{\posetB,c}^{*}} \kk[\posetB],
  \end{equation}
  which is the equivariant counterpart of~\eqref{eq:quisos-X-Kl} and involves again the maps from~\eqref{eq:Borel-ZK-DJ}.
  (The first map above is induced by the projection~\(ET\timesunder{T}\ZZ_{\posetB}\to EL\timesunder{L}\parq{\posetB}{\KK}\),
  which is a bundle with fibre~\(EK\).) By comparing the two zigzags, one can show that the diagram
  \begin{equation}
    \begin{tikzcd}      
      H_{L}^{*}(\parq{\posetB}{\KK}) \arrow{d}[left]{\cong} \arrow{r} & H^{*}(\parq{\posetB}{\KK}) \arrow{d}{\Psi_{\Sigma,c}} \\
      \kk[\posetB] \arrow{r} & \Tor_{H^{*}(BL)}\bigl(\kk,\kk[\posetB]\bigr)
    \end{tikzcd}
  \end{equation}
  commutes, where the top arrow is the canonical restriction map and the bottom one
  induced by the inclusion~\(\kk[\posetB]\hookrightarrow\Kl_{\posetB}\).

  Now assume that \(\parq{\posetB}{\KK}\) is equivariantly formal in the sense that \(H_{L}^{*}(\parq{\posetB}{\KK})\) surjects onto~\(H^{*}(\parq{\posetB}{\KK})\).
  For example, all smooth compact toric varieties fall into this class, \cf~\cite[Thm.~7.4.35]{BuchstaberPanov:2015}.
  The commutative diagram above then implies that
  any cohomology class in~\(H^*(\parq{\posetB}{\KK})\) can be represented by~\(1\otimes f\in\Kl_{\posetB}\) for some~\(f\in \kk[\posetB]\).
  We therefore conclude that the twisted and untwisted product on~\(\Kl_{\posetB}\) agree in cohomology in this case.
\end{remark}

\begin{example}
  \label{ex:nontrivial-qij-mult}
  We introduce an example that we will revisit throughout \Cref{sec:application}.
  We consider the fan~\(\Sigma\) in~\(\R^{3}\) whose maximal cones are the rays
  through the vectors~\(v=[1,1,1]\) and~\(w=-v=[-1,-1,-1]\).
  Then \(\XXX_{\Sigma}\cong\C^{\times}\times\C^{\times}\times\CP^{1}\), \cf~\cite[p.~22]{Fulton:1993}.
  Recall that the Koszul complex~\(\Kl_{\Sigma}\) is bigraded with an element~\(\alpha\otimes f\)
  having bidegree~\((-\deg{\alpha},\deg{f}+2\deg{\alpha})\). In this example, the bigraded \(\Tor\)~terms are as follows:
    \begin{equation}
    \begin{array}{ccc|c}
      \kk &  & & 6 \\
      \kk & \kk^{2} & & 4 \\
      & \kk^{2} & \kk & 2 \\
      & & \kk & 0 \\
      \hline
      -2 & -1 & 0 &
    \end{array}
  \end{equation}
  We use the canonical basis of~\(N=\Z^{3}\)
  and fix bases for the cohomology in the following bidegrees:
  \begin{align}
    (-1,2) &\colon a_{1} = [\alpha_{1}-\alpha_{3}], \quad a_{2} = [\alpha_{2}-\alpha_{3}], \\
    (-2,4) &\colon b = [\alpha_{1}\,\alpha_{2} + \alpha_{2}\,\alpha_{3} - \alpha_{1}\,\alpha_{3}], \\
    (0,2) &\colon c = [t_{v}] = [t_{w}].
  \end{align}
  The twisting terms for the product are given by
  \begin{equation}\label{eq:twisting terms in example}
    q_{11} = q_{22} = q_{33} = t_{w},
    \qquad
    q_{21} = q_{31} = q_{32} = t_{v}+t_{w}.
  \end{equation}
  For example, we have
  \begin{align}
    a_{1}*a_{2} &= [\alpha_{1}*\alpha_{2}-\alpha_{1}*\alpha_{3}-\alpha_{3}*\alpha_{2}+\alpha_{3}*\alpha_{3}] \\
    \notag &= [\alpha_{1}\,\alpha_{2} - \alpha_{1}\,\alpha_{3} - \alpha_{3}\,\alpha_{2} - q_{32} + q_{33}] = b - c .
  \end{align}

  If we took \(v\) (or~\(w\)) as part of our basis for~\(N\), then all twisting terms would vanish in cohomology.
  See \Cref{ex:nontrivial-qij-basis}, where we compare these two choices.
  As the examples given in~\cite[Ex.~1.1, Sec.~9]{Franz:torprod} show,
  it is not always possible to make the twisting terms vanish this way.
\end{example}

\subsection{The map induced in cohomology by a toric morphism}

Let \(\parqdef{\posetB}{\KK}\) and~\(\parqdef{\posetA}{\KK'}\)
be two partial quotients, and let \((A,\nu)\colon (\NA,\posetA)\to(\NB,\posetB)\)
be a toric morphism of simplicial posets as defined in \Cref{sec:morph}.
It induces the map~\(\phi\colon \parq{\posetA}{\KK'} \to \parq{\posetB}{\KK}\)
and its lift~\(\phi\colon \ZZ_{\posetA}\to \ZZ_{\posetB}\).
The latter is described by~\(\nu\) and the matrix~\(\hatA\in\N^{\VB\times\VA}\).
We also write \(\phi\) for the associated map~\(B\LA\to B\LB\)
as well as \(\hatphi\) for the maps~\(B\TA\to B\TB\) and~\(\DJ_{\posetA}\to\DJ_{\posetB}\).
Moreover, we write
\begin{equation}
  \rho \colon \DJ_{\posetB}\to BT \to BL
\end{equation}
for the map induced by the projection~\(T\to L\); the map~\(\rho'\) is defined analogously.

In this section we are going to show the following diagram induces a commutative diagram in cohomology.
This will be the main technical result of this paper.
\begin{equation}
  \label{diag:main}
  \begin{tikzcd}
    C^{*}(\parq{\posetB}{\KK}) \arrow{d} \arrow{r}{\phi^{*}} & C^{*}(\parq{\posetA}{\KK'}) \arrow{d} \\
    C^{*}((\ZZ_{\posetB})_{\KB}) \arrow{r} & C^{*}((\ZZ_{\posetA})_{\KA}) \\
    \BB\bigl(\kk,C^{*}(B\LB),C^{*}((\ZZ_{\posetB})_{\TB})\bigr) \arrow{u} \arrow[leftrightarrow]{d} \arrow{r} & \BB\bigl(\kk,C^{*}(B\LA),C^{*}((\ZZ_{\posetA})_{\TA})\bigr) \arrow{u} \arrow[leftrightarrow]{d} \\
    \BB\bigl(\kk,C^{*}(B\LB),C^{*}(\DJ_{\posetB})\bigr) \arrow{d}[left]{\BB(1,1,f_{\posetB,\cB}^{*})} \arrow{r}{\BB(\phi)} & \BB\bigl(\kk,C^{*}(B\LA),C^{*}(\DJ_{\posetA})\bigr) \arrow{d}[right]{\BB(1,1,f_{\posetA,\cA}^{*})} \\
    \BB_{\cB}\bigl(\kk,C^{*}(B\LB),\kk[\posetB]\bigr) \arrow{r}{\Theta_{h^{*}}} & \BB_{\cA}\bigl(\kk,C^{*}(B\LA),\kk[\posetA]\bigr) \\
    \Kl_{\posetB} \arrow{u}{\Phi_{\posetB}} \arrow{r}{\hatKtw} & \Kl_{\posetA} \arrow{u}[right]{\Phi_{\posetA}}
  \end{tikzcd}
\end{equation}
The vertical arrows are as in~\eqref{eq:quisos-X-Kl},
based on isomorphisms~\(N\cong\Z^{n}\) and~\(N'\cong\Z^{n'}\)
as well as on representatives~\(c\),~\(c'\in C_{1}(S^{1})\).
The first two squares commute by naturality
and the middle square by \Cref{thm:ZZSigma-DJSigma-toric-morph}.
By abuse of notation we write \(\BB(\phi)\) to denote the map
\begin{equation}
  \BB(\phi) \colon
  \BB\bigl(\kk,C^{*}(B\LB),C^{*}(\DJ_{\posetB})\bigr) \xrightarrow{\BB(1,\phi^{*},\hatphi^{*})} \BB\bigl(\kk,C^{*}(B\LA),C^{*}(\DJ_{\posetA})\bigr).
\end{equation}

The chain map~\(\Theta_{h^{*}}\) in the bottom row of the fourth square is the map~\eqref{eq:def-Theta-h} applied to the diagram
\begin{equation}
  \label{diag:phif-fphi}
  \begin{tikzcd}
    C^{*}(B\LB) \arrow{d}[left]{\rhoB^{*}} \arrow{r}{\phi^{*}} & C^{*}(B\LA) \arrow{d}{\rhoA^{*}} \\
    C^{*}(DJ_{\posetB}) \arrow{d}[left]{f_{\posetB,\cB}^{*}} \arrow[dashed]{dr}{h^{*}} \arrow{r}{\hatphi^{*}} & C^{*}(DJ_{\posetA}) \arrow{d}{f_{\posetA,\cA}^{*}} \\
    \kk[\posetB] \arrow{r}{\hatphi^{*}} & \kk[\posetA]
  \end{tikzcd}
\end{equation}
where \(h^{*}\colon C^{*}(DJ_{\posetB})\to \kk[\posetA]\) is the algebra homotopy
from~\(\hatphi^{*}\,f^{*}_{\posetB}\) to~\(f^{*}_{\posetA}\,\hatphi^{*}\) established in \Cref{thm:homotopy-DJ}.
Remember that it depends not only on~\(\posetA\) and~\(\posetB\),
but also on~\(c\),~\(c'\) and on the chains~\(b_{\vA}\in C_{2}(\TB)\)
chosen to satisfy the condition~\eqref{eq:def-bv} for the vertices~\(\vA\in\posetA\).

The commutativity of the fourth square up to homotopy therefore is a special case of \Cref{thm:homotopy-bar}.
Spelling out the maps suppressed in~\eqref{eq:def-Theta-h},we see that \(\Theta_{h^{*}}\) is of the form
\begin{gather}
  \label{eq:theta_h}
  \Theta_{h^{*}}\colon \BB_{\cB}\bigl(\kk,C^{*}(B\LB),\kk[\posetB]\bigr) \to \BB_{\cA}\bigl(\kk,C^{*}(B\LA),\kk[\posetA]\bigr) \\
  \notag
  [a_{1}|\dots|a_{k}]\otimes f \mapsto
  \begin{cases}
    \BBone \otimes \hatphi^{*}(f) & \text{if \(k=0\)} \\
    \bigl[\phi^{*}(a_{1})|\dots|\phi^{*}(a_{k})\bigr]\otimes \hatphi^{*}(f) \\
    \quad + \bigl[\phi^{*}(a_{1})|\dots|\phi^{*}(a_{k-1})\bigr]\otimes h^{*}\rhoB^{*}(a_{k})\,\hatphi^{*}(f) & \text{if~\(k>0\).}
  \end{cases}
\end{gather}

We introduce the abbreviation
\begin{equation}
  \label{eq:def-hatq}
  \hatq_{ij} = - h^{*}\,\rhoB^{*} (\gamma_{j}\cupone \gamma_{i}) \in \kk[\posetA]
\end{equation}
for~\(1\le j<i\le \nB\).
Recall from~\cite[Sec.~3.3]{Franz:torprod} that the \(2\)-simplices in~\(B\TB\) are
in bijection with elements of the lattice~\(\hatNB\).
In this interpretation, the \(2\)-simplices~\(\wwtwovs{\vA}{s}\) and~\(\wwtwovs{\vA}{s}\) introduced in~\eqref{eq:def-wvj}
have coordinates~\(\wwonevsv{\vA}{s}{\vB}\) and~\(\wwtwovsv{\vA}{s}{\vB}\) with~\(\vB\in\VB\).

\begin{lemma}
  \label{thm:hatq}
  For any~\(1\le j<i\le \nB\) we have
  \begin{equation*}
    \hatq_{ij} = \sum_{\vA\in\posetA} \sum_{\vB,\wB\in\VB} \sum_{s} \lambdavs{\vA}{s}\,\wwtwovsv{\vA}{s}{\vB}\,\wwonevsv{\vA}{s}{\wB}\,\xvi{\vB}{j}\,\xvi{\wB}{i}\,t_{\vA}
  \end{equation*}
  where the first sum extends over the non-ghost vertices in~\(\VA\).
\end{lemma}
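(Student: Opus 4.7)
The plan is to combine the explicit formula for \(h^{*}\) on \(\cupone\)-products given by \Cref{thm:hdual-cup1-DJ}\,\ref{thm:hdual-cup1-DJ-2} with the naturality of \(\cupone\) under \(\rhoB^{*}\), and then evaluate the resulting pairings in terms of lattice coordinates.

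First, I would observe that the composition \(\rhoB\colon DJ_{\SigmaB}\to B\TB\to B\LB\) is a map of simplicial sets, hence \(\rhoB^{*}\) is a morphism of hgas. In particular it commutes with \(\cupone\)-products, so
\begin{equation*}
\rhoB^{*}(\gamma_{j}\cupone\gamma_{i}) = \rhoB^{*}(\gamma_{j})\cupone\rhoB^{*}(\gamma_{i}).
\end{equation*}
Applying \Cref{thm:hdual-cup1-DJ}\,\ref{thm:hdual-cup1-DJ-2} to \(\alpha=\rhoB^{*}(\gamma_{j})\) and \(\beta=\rhoB^{*}(\gamma_{i})\) together with the definition~\eqref{eq:def-hatq} of \(\hatq_{ij}\) then yields
\begin{equation*}
\hatq_{ij}
= \sum_{\vA\in\SigmaA}\sum_{s} \lambdavs{\vA}{s}\,\bigpair{\rhoB^{*}(\gamma_{j}),\wwtwovs{\vA}{s}}\,\bigpair{\rhoB^{*}(\gamma_{i}),\wwonevs{\vA}{s}}\,t_{\vA},
\end{equation*}
where the first sum extends over the non-ghost vertices in~\(\VA\).

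It remains to evaluate each of the two pairings explicitly. Under the isomorphisms~\eqref{eq:iso-deg-1-2} the cocycle \(\gamma_{k}\in C^{2}(B\LB)\) corresponds to the \(k\)-th coordinate functional on~\(\NB\). The \(2\)-simplex \(\wwtwovs{\vA}{s}\in B\TB\) corresponds in turn to the element of~\(\hatNB\) with coordinates \((\wwtwovsv{\vA}{s}{\vB})_{\vB\in\VB}\), and the projection \(\rhoB\colon B\TB\to B\LB\) acts on \(2\)-simplices via the characteristic matrix \((\xvi{\vB}{k})\), sending this element to \(\sum_{\vB}\wwtwovsv{\vA}{s}{\vB}\,\xv{\vB}\in\NB\). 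Thus
\begin{equation*}
\bigpair{\rhoB^{*}(\gamma_{j}),\wwtwovs{\vA}{s}} = \sum_{\vB\in\VB} \wwtwovsv{\vA}{s}{\vB}\,\xvi{\vB}{j},
\end{equation*}
and analogously \(\bigpair{\rhoB^{*}(\gamma_{i}),\wwonevs{\vA}{s}}=\sum_{\wB\in\VB}\wwonevsv{\vA}{s}{\wB}\,\xvi{\wB}{i}\). Substituting these into the previous display and expanding gives exactly the claimed formula.

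No genuine obstacle is expected: the only point requiring a moment of care is the naturality of \(\cupone\) under \(\rhoB^{*}\), but this is immediate from the fact that the interval cut operations \(\AWu{u}\) used to define \(E_{1}\) (and hence \(\cupone\)) are natural for simplicial maps, so \(\rhoB^{*}\) respects the hga structure. Everything else reduces to bookkeeping in the lattice \(\hatNB\).
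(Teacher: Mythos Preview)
Your proposal is correct and follows essentially the same approach as the paper: apply \Cref{thm:hdual-cup1-DJ}\,\ref{thm:hdual-cup1-DJ-2} with \(\alpha=\rhoB^{*}(\gamma_{j})\), \(\beta=\rhoB^{*}(\gamma_{i})\), and then evaluate the pairings via the characteristic matrix. You spell out explicitly the naturality of \(\cupone\) under~\(\rhoB^{*}\), which the paper leaves implicit (it simply writes \(\pair{\gamma_{i},\wwonevs{\vA}{s}}\) for what you denote \(\pair{\rhoB^{*}(\gamma_{i}),\wwonevs{\vA}{s}}\)); otherwise the arguments coincide.
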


\begin{proof}
  This follows from \Cref{thm:hdual-cup1-DJ}\,\ref{thm:hdual-cup1-DJ-2} since
  \begin{equation}
    \pair{\gamma_{i},\wwonevs{\vA}{s}} = \sum_{\vB\in\VB}\wwonevsv{\vA}{s}{\vB}\,\xvi{\vB}{i}
    \qquad\text{and}\qquad
    \pair{\gamma_{j},\wwtwovs{\vA}{s}} = \sum_{\vB\in\VB}\wwtwovsv{\vA}{s}{\vB}\,\xvi{\vB}{j}.
    \qedhere
  \end{equation}
\end{proof}

\def\GammaA{\Gamma'}
To state and prove the next result we define
\begin{align}
  \Gamma\colon H^{*}(\LB) &\to \BB\,C^{*}(B\LB),
  & \alpha_{i_{1}} \cdots \alpha_{i_k} &\mapsto [\gamma_{i_{1}}] \circ \dots \circ [\gamma_{i_k}], \\
  \GammaA \colon H^{*}(\LB) &\to \BB\,C^{*}(B\LA),
  & \alpha_{i_{1}} \cdots \alpha_{i_k} &\mapsto [\phi^{*}(\gamma_{i_{1}})] \circ \dots \circ [\phi^{*}(\gamma_{i_k})]
\end{align}
where \(1\le i_{1}<\dots<i_{k}\le\nB\). Since \(\BB\,C^{*}(B\LB)\) is a dg~bialgebra and
each factor~\([\gamma_{i_s}]\) primitive, the map~\(\Gamma\) is a morphism of dgcs, and so is \(\GammaA = \BB(\phi)\,\Gamma\).

\begin{lemma}
  \label{thm:Thetah-PhiSigma}
  The composition
  \begin{gather*}
    \Theta_{h^{*}}\,\Phi_{\posetB}\colon \Kl_{\posetB}\to \BB_{\cA}\bigl(\kk,C^{*}(B\LA),\kk[\posetA]\bigr) \\
  \shortintertext{is given by}
    \alpha\otimes f \mapsto \GammaA(\alpha)\otimes \hatphi^{*}(f)
    + \sum_{i>j} \GammaA\bigl(\iota(x_i)\,\iota(x_j)\,\alpha\bigr)\otimes\,\hatq_{ij}\,\hatphi^{*}(f).
  \end{gather*}
\end{lemma}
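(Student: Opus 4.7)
The plan is to expand $\Theta_{h^{*}}\Phi_{\SigmaB}(\alpha\otimes f)$ using the defining formula~\eqref{eq:def-Theta-h} for $\Theta_{h^{*}}$. A convenient reformulation, equivalent to~\eqref{eq:def-Theta-h-bis}, shows that $\Theta_{h^{*}}(\mathbf{a}\otimes f)$ consists of a ``straight'' piece $\BB(\phi^{*})(\mathbf{a})\otimes\hatphi^{*}(f)$ together with a correction built from the coproduct $\Delta_{\BB A}\mathbf{a}=\sum\mathbf{a}'\otimes\mathbf{a}''$ and the twisting cochain $t$ applied to $\mathbf{a}''$. Substituting $\mathbf{a}=\Gamma(\alpha)$, the straight piece is immediately $\GammaA(\alpha)\otimes\hatphi^{*}(f)$ by the definition $\GammaA=\BB(\phi^{*})\Gamma$.

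For the correction term I would use that $\Gamma$ is a morphism of dg~coalgebras when $H^{*}(\LB)$ is endowed with the shuffle coproduct. This is because each $[\gamma_{i}]$ is primitive in the dg~bialgebra $\BB\,C^{*}(B\LB)$ and $\Gamma$ is the iterated product of these primitives. Hence $\Delta_{\BB A}\Gamma(\alpha)=(\Gamma\otimes\Gamma)\Delta(\alpha)$, which, for $\alpha=\alpha_{i_{1}}\cdots\alpha_{i_{k}}$, expands into a signed sum over ordered partitions $\{1,\dots,k\}=P\sqcup Q$ with shuffle sign $\epsilon(P,Q)$.

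Next, for each summand I would compute $h^{*}\rhoB^{*}\,t\,\Gamma(\alpha_{Q})$. By \Cref{thm:iterated-cup1}, $t\,\Gamma(\alpha_{Q})$ equals $(-1)^{|Q|-1}$ times the left-associated iterated $\cupone$-product of the $\gamma_{i_{q}}$ with $q\in Q$. \Cref{thm:hdual-cup1-DJ} then forces $h^{*}\rhoB^{*}$ to annihilate this unless $|Q|=2$: for $|Q|=1$ by part~(i), since $\gamma_{j}$ has even degree; for $|Q|\ge 3$ by part~(iii), since peeling off the outermost $\cupone$ exhibits the expression in the form $(\alpha\cupone\beta)\cupone\gamma$. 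Only pairs $Q=\{p<q\}$ survive, and writing $j=i_{p}$, $i=i_{q}$, the definition~\eqref{eq:def-hatq} gives $h^{*}\rhoB^{*}\,t\,\Gamma(\alpha_{Q})=-h^{*}\rhoB^{*}(\gamma_{j}\cupone\gamma_{i})=\hatq_{ij}$.

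The remaining task is sign bookkeeping. A direct computation shows that the shuffle sign $\epsilon(P,Q)=(-1)^{p+q-1}$ matches the sign appearing in $\iota(x_{i})\iota(x_{j})\alpha=(-1)^{p+q-1}\alpha_{P}$, so the contribution of the pair $\{p<q\}$ equals $\GammaA(\iota(x_{i})\iota(x_{j})\alpha)\otimes\hatq_{ij}\,\hatphi^{*}(f)$. Summing over all $1\le j<i\le\nB$—pairs outside the support of $\alpha$ contribute zero automatically through $\iota(x_{i})\iota(x_{j})$—yields the claimed formula. The sign bookkeeping is the most delicate step but it reduces to routine manipulations of Koszul signs.
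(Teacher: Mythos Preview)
Your proposal is correct and follows essentially the same approach as the paper's proof: both use the formula~\eqref{eq:def-Theta-h-bis}, the fact that $\Gamma$ is a coalgebra map, \Cref{thm:iterated-cup1} to identify $t\,\Gamma(\alpha_{Q})$ with an iterated $\cupone$-product, \Cref{thm:hdual-cup1-DJ} to reduce to $|Q|=2$, and then the same sign computation matching the shuffle sign with $\iota(x_{i})\iota(x_{j})$. Your treatment is slightly more explicit about the case $|Q|=1$, but otherwise the arguments coincide.
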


\begin{proof}
  We are using the formula~\eqref{eq:def-Theta-h-bis} for~\(\Theta_{h^{*}}\). Writing
  \begin{equation}
    \alpha_{I} = \alpha_{i_{1}}\cdots \alpha_{i_{k}}
  \end{equation}
  for~\(I=\{i_{1}<\dots<i_{k}\}\subset\{1,\dots,\nB\}\), we have
  \begin{equation}
    \Delta\,\Gamma(\alpha_{I}) = (\Gamma\otimes \Gamma)\,\Delta\,\alpha_{I} = \sum_{I=\Jone\sqcup \Jtwo}(-1)^{\sigma(\Jone,\Jtwo)}\,\Gamma(\alpha_{\Jone})\otimes\Gamma(\alpha_{\Jtwo})
  \end{equation}
  where \(\sigma(\Jone,\Jtwo)\) is the permutation sign of the decomposition~\(I=\Jone\sqcup \Jtwo\).
  
  Let \(t=t_{C^{*}(B\LB)}\colon \BB\,C^{*}(B\LB)\to C^{*}(B\LB)\) be the canonical twisting cochain.
  We know from \Cref{thm:iterated-cup1} that \(t\,\Gamma(\alpha_{\Jtwo})\) is an iterated \(\cupone\)-product, hence
  \(h^{*}\,t\,\Gamma(\alpha_{\Jtwo})\) vanishes unless \(\deg{\Jtwo}=2\) by \Cref{thm:hdual-cup1-DJ}.
  Moreover, in the case where \(\Jtwo=\{j<i\}\) has two elements, we have again by \Cref{thm:iterated-cup1} that
  \begin{equation}
    \label{eq:h rho t hatpsi}
    h^{*}\,\rhoB^{*}\,t\,\Gamma(\alpha_{\Jtwo})
    = h^{*}\,\rhoB^{*}\,t\bigl([\gamma_{j}]\circ[\gamma_{i}]\bigr)
    = - h^{*}\,\rhoB^{*}\bigl(\gamma_{j}\cupone\gamma_{i} \bigr).
  \end{equation}
  Comparison with \Cref{thm:hdual-cup1-DJ}\,\ref{thm:hdual-cup1-DJ-2} shows that the claimed formula holds up to sign.

  \goodbreak

  To conclude the proof, we note that again for~\(\Jtwo=\{i_{q}=j<i_{p}=i\}\) we have
  \begin{equation}
    \sigma(\Jone,\Jtwo) = (\nB-p) + (\nB-q-1) \equiv (q-1) + (p-2) \pmod{2},
  \end{equation}
  hence
  \begin{equation}
    \alpha_{\Jone} = (-1)^{\sigma(\Jone,\Jtwo)}\,\iota(x_i)\,\iota(x_j)\,\alpha_{I}.
  \end{equation}
  This proves that the sign is correct.
\end{proof}

Recall that in~\eqref{eq:def-Ktw} and~\eqref{eq:intro:def-hatKtw} we have defined the map
\begin{align}
  \label{eq:def-hatKtw}
  \hatKtw\colon \Kl_{\posetB} &\to \Kl_{\posetA}, \\
  \notag \alpha\otimes f &\mapsto \Ktw(\alpha\otimes f)
  + \sum_{i>j} \Ktw\bigl(\iota(x_{i})\,\iota(x_{j})\,\alpha \otimes f\bigr)\,\hatq_{ij}
\end{align}
where
\begin{equation}
  \Ktw(\alpha_{i_{1}}\cdots\alpha_{i_{k}}\otimes f) = \phi^{*}(\alpha_{i_{1}})*\dots*\phi^{*}(\alpha_{i_{k}}) * \hatphi^{*}(f)
\end{equation}
for~\(k\ge0\) and~\(i_{1}<\dots<i_{k}\). (Note the twisted products on the right-hand side.)
One can check that \(\Ktw\) as well as all operators~\(\iota(x_{i})\,\iota(x_{j})\colon\Kl_{\posetB}\to\Kl_{\posetB}\)
commute with the differentials, which implies that \(\hatKtw\) is a chain map.

\begin{lemma}
  \label{thm:bottom-square}
  The bottom square in diagram~\eqref{diag:main},
  \begin{equation*}
    \begin{tikzcd}
      \BB_{\cB}\bigl(\kk,C^{*}(B\LB),\kk[\posetB]\bigr) \arrow{r}{\Theta_{h^{*}}} & \BB_{\cA}\bigl(\kk,C^{*}(B\LA),\kk[\posetA]\bigr) \\
      \Kl_{\posetB} \arrow{u}{\Phi_{\posetB}} \arrow{r}{\hatKtw} & \Kl_{\posetA} \arrow{u}[right]{\Phi_{\posetA}} \mathrlap{,}
    \end{tikzcd}
  \end{equation*}
  is homotopy commutative.
\end{lemma}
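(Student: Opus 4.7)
The plan is to compare $\Theta_{h^{*}}\Phi_{\SigmaB}$ with $\Phi_{\SigmaA}\hatKtw$ using the explicit formula from \Cref{thm:Thetah-PhiSigma} on the one hand, and an iterated application of the multiplicativity relation~\eqref{eq:homotopy-twisted-prod} on the other.

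The first key observation is that $\Phi_{\SigmaA}\circ\phi^{*}=\GammaA$ holds at the cochain level on~$H^{1}(\LB)$. Indeed, the naturality of the bijection~\eqref{eq:iso-deg-1-2} determines a $2$-cocycle on a classifying space uniquely by its class, viewed as an additive functional on the lattice. Hence $\phi^{*}(\gamma_{j})\in C^{2}(B\LA)$ is precisely the unique linear combination of the $\gamma'_{l}$ representing the class $\phi^{*}(\alpha_{j})\in H^{1}(\LA)$. The defining formula for~$\Phi_{\SigmaA}$ in~\eqref{eq:quiso-Kl-baronesided}, extended by linearity, therefore sends $\phi^{*}(\alpha_{j})$ to $[\phi^{*}(\gamma_{j})]=\GammaA(\alpha_{j})$.

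Next, I would expand the definitions of~$\hatKtw$ and~$\Ktw$ inside $\Phi_{\SigmaA}\hatKtw(\alpha\otimes f)$ and iteratively apply~\eqref{eq:homotopy-twisted-prod} to each $*$-product. Each application replaces a single~$*$ inside~$\Phi_{\SigmaA}$ by the corresponding $\circ$-product in the bar construction, at the cost of a $\kk[\SigmaA]$-bilinear coboundary built from the homotopy~$H$ of~\eqref{eq:H-Kl-Kl-B}. Peeling off all factors of $\phi^{*}(\alpha_{i_{1}})*\cdots*\phi^{*}(\alpha_{i_{k}})*\hatphi^{*}(f)$ this way leaves $\GammaA(\alpha_{i_{1}})\circ\cdots\circ\GammaA(\alpha_{i_{k}})\circ(\BBone\otimes\hatphi^{*}(f))$, which by the componentwise product formula~\eqref{eq:prod-BB-onesided-comm} equals $\GammaA(\alpha)\otimes\hatphi^{*}(f)$. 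The same argument applied to each $\hatq_{ij}$-term, with $\hatq_{ij}\in\kk[\SigmaA]$ merely multiplying the algebra factor, produces $\GammaA(\iota(x_{i})\iota(x_{j})\alpha)\otimes\hatphi^{*}(f)\,\hatq_{ij}$ up to homotopy. Summing and comparing term by term with \Cref{thm:Thetah-PhiSigma} then yields the claim.

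The main obstacle I expect is assembling the individual $H$-coboundaries into a single, well-defined chain homotopy $\tilde H\colon\Kl_{\SigmaB}\to\BB_{\cA}(\kk,C^{*}(B\LA),\kk[\SigmaA])$. The $\kk[\SigmaA]$-bilinearity of~$H$ together with associativity of the $*$-product allow one to peel off factors one at a time, but the bookkeeping is non-trivial: one must track Koszul signs when commuting the contractions $\iota(x_{i})\iota(x_{j})$ past exterior factors of~$\alpha$, as well as the sign and ordering conventions coming from the identification $\BB A\otimes M\cong\BB(\kk,A,M)$.
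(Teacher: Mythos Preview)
Your approach is correct and essentially the same as the paper's. The paper makes your ``peeling off one factor at a time'' explicit by defining the homotopy~$\tilde H$ recursively via
\[
\tilde H(\alpha_{i_{1}}\cdots\alpha_{i_{k}}\otimes 1)=H\bigl(\phi^{*}(\alpha_{i_{1}})*\cdots*\phi^{*}(\alpha_{i_{k-1}})\otimes\phi^{*}(\alpha_{i_{k}})\bigr)+\tilde H(\alpha_{i_{1}}\cdots\alpha_{i_{k-1}})\circ\Phi_{\SigmaA}(\phi^{*}(\alpha_{i_{k}})),
\]
then verifies by induction on~$k$ that $d(\tilde H)=\BB(\phi)\,\Phi_{\SigmaB}-\Phi_{\SigmaA}\,\Ktw$ (using exactly your first observation that $\Phi_{\SigmaA}(\phi^{*}(\alpha_{j}))=[\phi^{*}(\gamma_{j})]=\GammaA(\alpha_{j})$), and finally sets $\hat H(\alpha\otimes f)=\tilde H(\alpha\otimes f)+\sum_{i>j}\tilde H(\iota(x_{i})\iota(x_{j})\alpha\otimes f)\,\hatq_{ij}$ to match \Cref{thm:Thetah-PhiSigma}, just as you propose for the $\hatq_{ij}$-terms.
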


\begin{proof}
  Based on the homotopy~\(H\) recalled in~\eqref{eq:H-Kl-Kl-B}, we define the map
  \begin{equation}
    \tildeH\colon \Kl_{\posetB} \to \BB_{\cA}\bigl(\kk, C^{*}(B\LA),\kk[\posetA]\bigr)
  \end{equation}
  of degree~\(-1\) by linearity with respect to the algebra map~\(\hatphi^{*}\colon\kk[\posetB]\to\kk[\posetA]\) and
  \begin{align}
    \tildeH(1\otimes 1) &= 0 \\
    \tildeH(\alpha_{i_{1}}\cdots \alpha_{i_{k}}\otimes 1) &=
    H\Big(\phi^{*}(\alpha_{i_{1}})*\dots * \phi^{*}(\alpha_{i_{k-1}})\otimes \phi^{*}(\alpha_{i_{k}})\Big) \\
    \notag &\qquad + \tildeH(\alpha_{i_{1}}\cdots \alpha_{i_{k-1}}) \circ \Phi_{\posetA}(\phi^{*}(\alpha_{i_{k}}))
  \end{align}
  for~\(k\ge 1\) and~\(1\le i_{1}<\dots<i_{k}\le \nB\).
  We claim that \(\tildeH\) is a homotopy from~\(\Phi_{\posetA}\,\Ktw\) to~\(\BB(\phi)\,\Phi_{\posetB}\). In other words,
  \begin{equation}
    d(\tildeH) = \BB(\phi)\,\Phi_{\posetB}-\Phi_{\posetA}\,\Ktw.
  \end{equation}
  The isomorphism~\eqref{eq:iso-deg-1-2} and the definition of~\(\Ktw\) imply that this identity can be written as
  \begin{multline}
    \label{eq:tildeH-final}
    (d\,\tildeH+\tildeH\,d)(\alpha_{i_{1}}\cdots \alpha_{i_{k}}) = \\*
    \Phi_{\posetA}(\phi^{*}(\alpha_{i_{1}}))\circ \dots \circ \Phi_{\posetA}(\phi^{*}(\alpha_{i_{k}}))
    - \Phi_{\posetA}\big(\phi^{*}(\alpha_{i_{1}})*\dots*\phi^{*}(\alpha_{i_{k}})\big).
  \end{multline}

  We verify \eqref{eq:tildeH-final} by induction on~\(k\), the case~\(k=0\) being trivial.
  For~\(k\geq 1\) we obtain from definition of~\(\tildeH\), induction and~\eqref{eq:homotopy-twisted-prod} that
  \begin{align}
    d\,\tildeH(\alpha_{i_{1}} \cdots \alpha_{i_{k}}) &= d\,H\Big(\phi^{*}(\alpha_{i_{1}}) * \dots * \phi^{*}(\alpha_{i_{k-1}})\otimes \phi^{*}(\alpha_{i_{k}})\Big) \\*
    \notag &\qquad + d\,\tildeH(\alpha_{i_{1}} \cdots \alpha_{i_{k-1}}) \circ \Phi_{\posetA}(\phi^{*}(\alpha_{i_{k}})) \\*
    \notag &\qquad + (-1)^{k}\,\tildeH(\alpha_{i_{1}} \cdots \alpha_{i_{k-1}})\circ d\,\Phi_{\posetA}\big(\phi^{*}(\alpha_{i_{k}})\big) \\
    \notag &= - H\,d\Big(\phi^{*}(\alpha_{i_{1}}) * \dots * \phi^{*}(\alpha_{i_{k-1}})\otimes \phi^{*}(\alpha_{i_{k}})\Big) \\*
    \notag &\qquad + \Phi_{\posetA}\big(\phi^{*}(\alpha_{i_{1}}) * \dots * \phi^{*}(\alpha_{i_{k-1}})\big) \circ \Phi_{\posetA}\big(\phi^{*}(\alpha_{i_{k}})\big) \\*
    \notag &\qquad - \Phi_{\posetA}\big(\phi^{*}(\alpha_{i_{1}}) * \dots * \phi^{*}(\alpha_{i_{k-1}}) * \phi^{*}(\alpha_{i_{k}})\big) \\*
    \notag &\qquad - \tildeH\,d(\alpha_{i_{1}} \cdots \alpha_{i_{k-1}}) \circ \Phi_{\posetA}(\phi^{*}(\alpha_{i_{k}})) \\*
    \notag &\qquad + \Phi_{\posetA}(\phi^{*}(\alpha_{i_{1}}))\circ \dots \circ \Phi_{\posetA}(\phi^{*}(\alpha_{i_{k-1}})) \circ \Phi_{\posetA}(\phi^{*}(\alpha_{i_{k}})) \\*
    \notag &\qquad - \Phi_{\posetA}\big(\phi^{*}(\alpha_{i_{1}})*\dots*\phi^{*}(\alpha_{i_{k-1}})\big) \circ \Phi_{\posetA}(\phi^{*}(\alpha_{i_{k}})) \\*
    \notag &\qquad + (-1)^{k}\, \tildeH(\alpha_{i_{1}}\cdots \alpha_{i_{k-1}}) \circ d\,\Phi_{\posetA}(\phi^{*}(\alpha_{i_{k}})) \\
    \notag &= - H\,d\Big(\phi^{*}(\alpha_{i_{1}}) * \dots * \phi^{*}(\alpha_{i_{k-1}})\otimes \phi^{*}(\alpha_{i_{k}})\Big) \\*
    \notag &\qquad - \tildeH\,d(\alpha_{i_{1}} \cdots \alpha_{i_{k-1}}) \circ \Phi_{\posetA}(\phi^{*}(\alpha_{i_{k}})) \\*
    \notag &\qquad - (-1)^{k-1}\, \tildeH(\alpha_{i_{1}}\cdots \alpha_{i_{k-1}}) \circ \Phi_{\posetA}(d\,\phi^{*}(\alpha_{i_{k}})) \\*
    \notag &\qquad + \Phi_{\posetA}(\phi^{*}(\alpha_{i_{1}}))\circ \dots \circ \Phi_{\posetA}(\phi^{*}(\alpha_{i_{k}})) \\*
    \notag &\qquad - \Phi_{\posetA}\big(\phi^{*}(\alpha_{i_{1}}) * \dots * \phi^{*}(\alpha_{i_{k}})\big).
  \end{align}
  Summing up the first three terms of the last expression,
  we get \(-\tildeH\,d(\alpha_{i_{1}}\cdots\alpha_{i_{k}})\),
  which proves the claim.

  \Cref{thm:Thetah-PhiSigma} finally implies that the map
  \begin{align}
    \hatH\colon \Kl_{\posetB} &\to \BB_{\cA}\bigl(\kk, C^{*}(B\LA),\kk[\posetA]\bigr), \\
    \notag \alpha\otimes f &\mapsto \tildeH(\alpha\otimes f)
    + \sum_{i>j}\tildeH\big(\iota(x_i)\,\iota(x_j)\,\alpha\otimes f\big)\,\hatq_{ij}.
  \end{align}
  is a homotopy from~\(\Phi_{\posetA}\,\hatKtw\) to~\(\Theta_{h^{*}} \,\Phi_{\posetB}\).
\end{proof}

The toric morphism~\((A,\nu)\colon (\NA,\posetA)\to(\NB,\posetB)\) defining \(\phi\)
naturally gives maps \(\phi^{*}\colon H^{*}(\LB)\to H^{*}(\LA)\)
and \(\hatphi^{*}\colon\kk[\VB]\to\kk[\VA]\), hence also a chain map
\begin{equation}
  \label{eq:def-phi-Kl}
  \Kl_{\posetB}\to\Kl_{\posetA},
  \qquad
  \alpha\otimes f \mapsto \phi^{*}(\alpha)\otimes\hatphi^{*}(f).
\end{equation}
We denote the induced map in cohomology by
\begin{equation}
  \Tor(\phi)\colon \Tor_{H^{*}(B\LB)}(\kk,\kk[\posetB]) \to \Tor_{H^{*}(B\LA)}(\kk,\kk[\posetA]).
\end{equation}
It is multiplicative with respect to the canonical products on the \(\Tor\)~terms.
Moreover, we write the map induced by~\(\hatKtw\) as
\begin{equation}
  \hatTor{\phi}\colon \Tor_{H^{*}(B\LB)}(\kk,\kk[\posetB]) \to \Tor_{H^{*}(B\LA)}(\kk,\kk[\posetA]).
\end{equation}
In contrast to~\(\Tor(\phi)\), this map does not only depend on~\((A,\nu)\),
but also on the twisting elements~\(\hatq_{ij}\in\kk[\posetA]\) as given by \Cref{thm:hatq}
and on the twisted \(*\)-product on~\(\Kl_{\posetA}\),
which in turn depends on the elements~\(q_{ij}\in\kk[\posetA]\) defined in~\eqref{eq:def-qij}.
Ultimately, \(\hatTor{\phi}\) depends on~\((A,\nu)\), on the bases chosen for~\(N\) and~\(N'\)
and on the representatives~\(c\) and~\(c'\).
The examples in the following section will show that \(\Tor{\phi}\) and~\(\hatTor{\phi}\) differ in general.

Summing up the discussion of this section, we obtain our main technical result.

\begin{theorem}
  \label{thm:main}
  In cohomology the diagram~\eqref{diag:main} induces the commutative diagram
  \begin{equation*}
    \begin{tikzcd}
      H^{*}(\parq{\posetB}{\KK}) \arrow{d}[left]{\Psi_{\posetB,\cB}} \arrow{r}{\phi^{*}} & H^{*}(\parq{\posetA}{\KK'}) \arrow{d}{\Psi_{\posetA,\cA}} \\
      \Tor_{H^{*}(B\LB)}(\kk,\kk[\posetB]) \arrow{r}{\hatTor{\phi}} & \Tor_{H^{*}(B\LA)}(\kk,\kk[\posetA]) \mathrlap{.}
    \end{tikzcd}
  \end{equation*}
\end{theorem}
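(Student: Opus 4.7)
The plan is to verify the commutativity of \Cref{thm:main} by a direct square-by-square inspection of the large diagram~\eqref{diag:main}. Passing to cohomology turns each vertical column into the isomorphism~\(\Psi_{\SigmaA,\cA}\) or~\(\Psi_{\SigmaB,\cB}\) (by construction of the zigzag~\eqref{eq:quisos-X-Kl}), and the induced horizontal arrows on the outside are precisely \(\phi^{*}\) on top and~\(\hatTor{\phi}\) on the bottom, so it suffices to check that every one of the five squares commutes at least up to chain homotopy.

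The first and second squares commute on the nose: the first by functoriality of the construction~\(\XX_{\Sigma} \leftarrow (\ZZ_{\Sigma})_{K}\) from~\cite[Lemma~4.2]{Franz:torprod} (\cf~\eqref{eq:Z-Sigma-K-X-Sigma}), and the second by naturality of the map \(\BB(\kk,C^{*}(BL),C^{*}((\ZZ)_{T}))\to C^{*}((\ZZ)_{K})\) from~\cite[Prop.~3.2]{Franz:torprod}, both of which are natural in toric data. The third (middle) square, connecting the Borel construction of~\(\ZZ_{\Sigma}\) to the Davis--Januszkiewicz space, is the content of \Cref{thm:ZZSigma-DJSigma-toric-morph}. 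The fourth square is the key input from \Cref{sec:alg}: it fits the diagram~\eqref{diag:phif-fphi}, whose bottom rectangle commutes up to the algebra homotopy~\(h^{*}\) supplied by \Cref{thm:homotopy-DJ}, so \Cref{thm:homotopy-bar} (with \(\Theta_{h^{*}}\) as defined in~\eqref{eq:theta_h}) produces the required chain homotopy. The fifth (bottom) square is exactly \Cref{thm:bottom-square}. Composing the four chain homotopies with the surrounding quasi-iso\-mor\-phisms and then passing to cohomology yields the commutative square of~\Cref{thm:main}.

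The only piece that is not ``pure naturality'' and that I expect to be the crux is the identification of the bottom horizontal composite \(\Phi_{\SigmaA}^{-1}\,\Theta_{h^{*}}\,\Phi_{\SigmaB}\) with the map induced by~\(\hatKtw\) on~\(\Tor\). This identification is precisely the bookkeeping performed in \Cref{thm:Thetah-PhiSigma} (using \Cref{thm:iterated-cup1} and \Cref{thm:hdual-cup1-DJ} to extract the twisting terms~\(\hatq_{ij}=-h^{*}\rhoB^{*}(\gamma_{j}\cupone\gamma_{i})\) from \Cref{thm:hatq}), so at this stage of the paper everything has been prepared and the argument is genuinely just the concatenation of the homotopies constructed in the preceding sections. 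Therefore the theorem follows by chaining together the five (homotopy-)commutative squares of~\eqref{diag:main} and taking cohomology.
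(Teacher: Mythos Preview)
Your proof is correct and follows exactly the paper's own approach: the theorem is stated as ``summing up the discussion of this section,'' and that discussion is precisely the square-by-square verification you give, invoking naturality for the first two squares, \Cref{thm:ZZSigma-DJSigma-toric-morph} for the middle, \Cref{thm:homotopy-bar} with the algebra homotopy~\(h^{*}\) from \Cref{thm:homotopy-DJ} for the fourth, and \Cref{thm:bottom-square} for the bottom. Your identification of the bottom composite with~\(\hatTor{\phi}\) via \Cref{thm:Thetah-PhiSigma} is also exactly how the paper structures the argument.
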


\section{Applications}
\label{sec:application}

We continue to consider a map
\begin{equation}
  \phi=\phi_{(A,\nu)}\colon \parqdef{\posetA}{\KKA}\to\parqdef{\posetB}{\KKB}
\end{equation}
induced by a toric morphism of simplicial posets~\((A,\nu)\colon (\NA,\posetA)\to(\NB,\posetB)\).
We use the same notation as in the preceding section.
Recall from \Cref{thm:toric-top} that in case~\(\Sigma\) and~\(\Sigma'\) are rational fans
we have a commutative diagram
\begin{equation}
  \begin{tikzcd}
    \XX_{\SigmaA} \arrow{d} \arrow{r}{\phi^{*}} & \XX_{\SigmaB} \arrow{d} \\
    \XXX_{\SigmaA} \arrow{r}{\phi^{*}} & \XXX_{\SigmaB}
  \end{tikzcd}
\end{equation}
whose vertical arrows are equivariant homotopy equivalences.
It therefore suffices to prove the analogues of \Cref{thm:intro:nat-2,thm:intro:nat} for partial quotients.
This is done in \Cref{sec:nat-general,sec:nat-2}, respectively.
The same remark applies to \Cref{sec:compare}
where we compare the twisted and the untwisted product in case \(2\) is invertible in~\(\kk\).
In \Cref{sec:macs} we make some additional comments regarding moment-angle complexes.

\subsection{Naturality in general}
\label{sec:nat-general}

The goal of this section is to prove \Cref{thm:intro:nat},
which applies to any coefficient ring~\(\kk\). In the light of \Cref{thm:main},
the main work left to do is construct explicit chains~\(b_{\vA}\in C_{2}(T_{\nu(\vA)})\)
such that the condition~\eqref{eq:def-bv} holds for all vertices~\(\vA\) of~\(\posetA\).
In a second step we evaluate the formula for~\(\hatq_{ij}\) obtained in \Cref{thm:hatq}.

Unless stated otherwise, we choose the canonical representative~\(c=[1]\in C_{1}(S^{1})\)
of the generator~\(1\in H_{1}(S^{1})=\Z\) for the rest of this paper. It leads to the multiplicative isomorphism
\begin{equation}
  \Psi_{\posetB} = \Psi_{\posetB,c}\colon H^{*}(\parq{\posetB}{\KK}) \to \Tor_{H^{*}(BL)}\bigl(\kk,\kk[\posetB]\bigr)
\end{equation}
where the multiplication on the torsion product is induced by the \(*\)-product~\eqref{eq:twisted-prod} on~\(\Kl_{\posetB}\).
The twisting terms~\(q_{ij}\) from~\eqref{eq:def-qij} evaluate to~\eqref{eq:def-qij-general} in this case.
All this is \cite[Thm.~7.2]{Franz:torprod}, which we have recalled in \Cref{thm:iso-mult-general}.

Recall from \Cref{sec:morph} that a toric morphism~\((A, \nu)\colon (\NA,\posetA)\to (\NB,\posetB)\) of simplicial posets (or regular fans)
lifts to a toric morphism~\({(\hatA,\nu)}\colon (\hatNA, \posetA) \to (\hatNB, \posetB)\)
where \(\hatA=(\hatawv{\vB}{\vA})\in\Z^{\VB\times \VA}\).
Moreover, if \(\vA\) is a vertex of~\(\posetA\) (and not a ghost vertex),
then \(\hatawv{\vB}{\vA}\ge0\) if \(\vA\in\nu(\vB)\) and equal to~\(0\) otherwise.

We take a vertex~\(\vA\in\posetA\) and set \(\tau=\nu(\vA)\).
Remember that we are working with simplicial tori, which are bar constructions of lattices.
The simplicial torus~\(T_{\tau}\) is the bar construction of the sublattice of~\(\hatNB\)
spanned by the basis vectors~\(e_{\vB}\) with~\(\vB\in\tau\).

For each~\(\vB\in\tau\) we define the \(2\)-chains
\begin{align}
   C_{2}(\TB_{\vB}) \ni
  b_{\vA,\vB} &=
  \begin{cases}
    [e_{\vB},e_{\vB}]+\cdots+[(\hatawv{\vB}{\vA}-1)\,e_{\vB},e_{\vB}] & \text{if~\(\hatawv{\vB}{\vA}\geq 2\),} \\
    0 & \text{if~\(\hatawv{\vB}{\vA}=0\) or~\(1\)}
  \end{cases} \\
  \shortintertext{and}
  \label{eq:bv}
  C_{2}(\TB_{\tau}) \ni
  b_{\vA} &= -\sum_{\vB\in\tau} b_{\vA,\vB}
    -\sum_{\wB\in\tau} \Biggl[ \sum_{\vB<\wB} \hatawv{\vB}{\vA} e_{\vB}, \hatawv{\wB}{\vA} e_{\wB} \Biggr],
\end{align}
where we compare the vertices in~\(\VB\) according to the chosen order. Then
\begin{align}
  d\,b_{\vA,\vB} &= \hatawv{\vB}{\vA}[e_{\vB}]-[\hatawv{\vB}{\vA} e_{\vB}], \\
  \shortintertext{and therefore also}
  d\,b_{\vA} &= \Biggl[ \sum_{\vB\in \VB} \hatawv{\vB}{\vA} e_{\vB} \Biggr] - \sum_{\vB\in \VB} \hatawv{\vB}{\vA}\,[e_{\vB}] 
    = \check{c}_{\vA} - \hat{c}_{\vA},
\end{align}
where \(\check{c}_{\vA} \) and~\(\hat{c}_{\vA}\) are defined as in~\eqref{eq:def-bv}.

\Cref{thm:hatq} now gives
\begin{equation}
  \label{eq:hatq-general-1}
  \hatq_{ij} = - \sum_{\vA\in\posetA}\Biggl(\,
  \sum_{\vB\in\nu(\vA)} \frac{\hatawv{\vB}{\vA}(\hatawv{\vB}{\vA}-1)}{2}\, \xvi{\vB}{i}\,\xvi{\vB}{j}
  + \sum_{\substack{\vB,\wB\in\nu(\vA)\\\vB<\wB}} \hatawv{\vB}{\vA}\,\hatawv{\wB}{\vA}\,\xvi{\vB}{i}\,\xvi{\wB}{j}
  \Biggr)\,t_{\vA}
\end{equation}
for~\(1\leq j<i\leq \nB\),
where \((\xvi{\vB}{i})\) is the characteristic matrix for~\(\parq{\posetB}{\KK}\)
and the first sum extends over all non-ghost vertices~\(\vA\).

As mentioned above, \(\hatawv{\vB}{\vA}\) vanishes for~\(\vA\in\posetA\) and~\(\vB\notin\nu(\vA)\),
and so does \(t_{\vA}\) for each ghost vertex~\(\vA\). We can therefore rewrite formula~\eqref{eq:hatq-general-1}
in the form
\begin{equation}
  \label{eq:hatq-general}
  \hatq_{ij} = - \sum_{\vA\in\VA}\Biggl(\,
  \sum_{\vB\in \VB} \frac{\hatawv{\vB}{\vA}(\hatawv{\vB}{\vA}-1)}{2}\, \xvi{\vB}{i}\,\xvi{\vB}{j}
  + \sum_{\substack{\vB,\wB\in \VB\\\vB<\wB}} \hatawv{\vB}{\vA}\,\hatawv{\wB}{\vA}\,\xvi{\vB}{i}\,\xvi{\wB}{j}
  \Biggr)\,t_{\vA}.
\end{equation}

\begin{remark}
  \label{rem:hatqij-unique}
  We pointed out in \Cref{rem:hatA-unique} that the coefficient~\(\hatawv{\vB}{\vA}\) of the lift~\(\hatA\) of~\(A\)
  is uniquely determined by~\((A,\nu)\) if \(\vA\) is not a ghost vertex.
  Hence formula~\eqref{eq:hatq-general-1} shows that the twisting terms~\(\hatq_{ij}\) are independent
  of the chosen lifting, and so are the map~\(\hatKtw\) defined in~\eqref{eq:def-hatKtw}
  and the induced map~\(\hatTor{\phi}\) in cohomology.
\end{remark}

\begin{theorem}
  \label{thm:nat-general}
  The following diagram commutes.
  \begin{equation*}
    \begin{tikzcd}
      H^{*}(\parq{\posetB}{\KK}) \arrow{d}[left]{\Psi_{\posetB}} \arrow{r}{\phi^{*}} & H^{*}(\parq{\posetA}{\KK'}) \arrow{d}{\Psi_{\posetA}} \\
      \Tor_{H^{*}(B\LB)}(\kk,\kk[\posetB]) \arrow{r}{\hatTor{\phi}} & \Tor_{H^{*}(B\LA)}(\kk,\kk[\posetA]) 
    \end{tikzcd}
  \end{equation*}
  Here the twisting terms~\(\hatq_{ij}\) implicit in the definition of~\(\hatTor{\phi}\)
  are given by~\eqref{eq:hatq-general}.
\end{theorem}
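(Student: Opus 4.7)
The plan is to apply \Cref{thm:main} with the specific choice of chains~\(b_{\vA}\) constructed in~\eqref{eq:bv} and then identify the resulting twisting terms~\(\hatq_{ij}\) with the claimed closed-form expression. The commutativity of the diagram is not the hard part: \Cref{thm:main} already yields a commutative square in cohomology for any system of chains~\(b_{\vA}\in C_{2}(\TB_{\nu(\vA)})\) satisfying the condition~\eqref{eq:def-bv}, so the content of this theorem reduces to producing such chains and computing the associated~\(\hatq_{ij}\) explicitly.

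First I would verify that the candidate chains~\(b_{\vA}\) defined in~\eqref{eq:bv} satisfy the required boundary condition. This is a direct computation: one checks that~\(d\,b_{\vA,\vB} = \hatawv{\vB}{\vA}[e_{\vB}] - [\hatawv{\vB}{\vA}e_{\vB}]\), and then assembles these contributions together with the cross terms~\([\sum_{\vB<\wB}\hatawv{\vB}{\vA}e_{\vB},\hatawv{\wB}{\vA}e_{\wB}]\) so that the telescoping produces \(\check{c}_{\vA} - \hat{c}_{\vA}\). This calculation is essentially bookkeeping, using only the formula~\(d[x,y]=[y]-[x+y]+[x]\) for bar-construction \(2\)-simplices of a lattice.

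Next I would apply \Cref{thm:hatq} to read off~\(\hatq_{ij}\). The \(2\)-simplices~\(\bis{\vA}{s}\) appearing in the expansion~\eqref{eq:bv} come in two families: the diagonal pieces~\([k\,e_{\vB},e_{\vB}]\) contributing~\(\hatawv{\vB}{\vA}(\hatawv{\vB}{\vA}-1)/2\) copies with coefficients~\(\wwonevsv{\vA}{s}{\vB}=\wwtwovsv{\vA}{s}{\vB}=1\), and the off-diagonal pieces~\([\sum_{\vB<\wB}\hatawv{\vB}{\vA}e_{\vB},\hatawv{\wB}{\vA}e_{\wB}]\) with front-face coordinates giving~\(\hatawv{\vB}{\vA}\) (for~\(\vB<\wB\)) and back-face coordinate giving~\(\hatawv{\wB}{\vA}\). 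Substituting into \Cref{thm:hatq} and keeping track of the overall sign~\(-1\) from the definitions of~\(b_{\vA}\) and~\(\hatq_{ij}\) yields formula~\eqref{eq:hatq-general-1}. Finally I would observe that because~\(\hatawv{\vB}{\vA}=0\) whenever~\(\vB\notin\nu(\vA)\), and because~\(t_{\vA}=0\) for each ghost vertex~\(\vA\in\VA\), one may freely extend the summations over all of~\(\VB\) and~\(\VA\) to obtain~\eqref{eq:hatq-general}.

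The main obstacle—aside from keeping signs straight in the identification step—is making sure the chains~\(b_{\vA}\) assemble functorially, i.e.\ that the choice is compatible with inclusions of simplices in~\(\SigmaA\) as demanded by the colimit construction of the homotopy~\(h\) in~\eqref{eq:def-h-SigmaA}. This compatibility is however built in: the definition~\eqref{eq:bv} depends only on the matrix entries~\(\hatawv{\vB}{\vA}\) and the chosen order of~\(\VB\), both of which restrict correctly along face inclusions, so no separate argument is needed. With this in hand, \Cref{thm:main} delivers the commutative diagram and the explicit formula~\eqref{eq:hatq-general} for~\(\hatq_{ij}\) completes the statement.
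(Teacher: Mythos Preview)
Your approach is correct and matches the paper's: invoke \Cref{thm:main} with the explicit chains~\(b_{\vA}\) from~\eqref{eq:bv}, then evaluate \Cref{thm:hatq} to obtain~\eqref{eq:hatq-general-1} and extend to~\eqref{eq:hatq-general}. The paper's proof is in fact a one-line reference to exactly these ingredients, the computations having been placed in the text preceding the theorem statement.

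One small inaccuracy in your bookkeeping: the diagonal pieces are the \(2\)-simplices~\([k\,e_{\vB},e_{\vB}]\) for~\(k=1,\dots,\hatawv{\vB}{\vA}-1\), so there are \(\hatawv{\vB}{\vA}-1\) of them, not \(\hatawv{\vB}{\vA}(\hatawv{\vB}{\vA}-1)/2\). The front face is \([k\,e_{\vB}]\) and the back face is \([e_{\vB}]\), hence \(\wwonevsv{\vA}{s}{\vB}=k\) and \(\wwtwovsv{\vA}{s}{\vB}=1\) (not both equal to~\(1\)). The binomial coefficient then arises as \(\sum_{k=1}^{\hatawv{\vB}{\vA}-1} k\cdot 1 = \hatawv{\vB}{\vA}(\hatawv{\vB}{\vA}-1)/2\) when you sum the products \(\wwtwovsv{\vA}{s}{\vB}\,\wwonevsv{\vA}{s}{\vB}\) in \Cref{thm:hatq}. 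Your final formula is unaffected, but the mechanism producing it is slightly different from what you described.
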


\begin{proof}
  The statement is a consequence of \Cref{thm:main} and \Cref{thm:hatq}
  with a substitution of the elements~\(c_{v}\in C_{1}(T)\),~\(c_{v'}\in C_{1}(T')\) as well as \(b_{\vA}\) from~\eqref{eq:bv}.
\end{proof}

\begin{remark}
  We continue the discussion started in \Cref{rem:equiv-formal-prod}.
  Given any toric morphism~\(\phi\colon \parq{\posetA}{\KK'}\to \parq{\posetB}{\KK}\),
  we see from the definition~\eqref{eq:def-hatKtw} that the map~\(\hatKtw\)
  sends any~\(1\otimes f\in\Kl_{\posetB}\) to~\(1\otimes\hatphi^{*}(f)\in\Kl_{\posetA}\).
  If \(\parq{\posetB}{\KK}\) is equivariantly formal (which happens for instance
  in the case of smooth compact toric varieties), this implies
  that the induced map~\(\hatTor{\phi}\) in cohomology is untwisted in the sense that it coincides with~\(\Tor(\phi)\).
  This is not surprising because the equivariant formality of~\(\parq{\posetB}{\KK}\) entails that
  the map~\(H^{*}(\parq{\posetB}{\KK})\to H^{*}(\parq{\posetA}{\KK'})\) is completely determined by its equivariant counterpart.
\end{remark}

\begin{example}
  \label{ex:nontrivial-qij-basis}
  Consider the fan \(\Sigma'\) in \(\R^3\) whose maximal cones are rays spanned
  by the vectors~\(v'=e_{3}=[0,0,1]\) and \(w'=-v'=[0,0,-1]\).
  Then \(\XXX_{\Sigma'}\cong\XXX_\Sigma\), where \(\XXX_\Sigma\) is as in \Cref{ex:nontrivial-qij-mult}.
  Using the canonical basis for~\(N'=\Z^3\), 
  we get the following generators for the cohomology \(H^{*}(\XXX_{\Sigma'})\) in the specified bidegrees:
  \begin{align}
    (-1,2) &\colon a'_{1} = [\alpha'_{1}], \quad a'_{2} = [\alpha'_{2}], \\
    (-2,4) &\colon b' = [\alpha'_{1}\,\alpha'_{2}], \\
    (0,2) &\colon c' = [t_{v'}] = [t_{w'}].
  \end{align}
  The only non-zero twisting term is \(q'_{33}=t_{w'}\), which entails that the multiplication is untwisted in cohomology.

  The map~\(A\colon N'\to N\), \(e_{1}\mapsto e_{1}\),~\(e_{2}\mapsto e_{2}\),~\(e_{3}\mapsto v\)
  determines an isomorphism of toric varieties
  \( 
    \phi \colon \XXX_{\Sigma'}\to \XXX_{\Sigma}
  \). 
  Its lift~\(\hatA\colon \hatN'\to \hatN\) is the identity map on~\(\Z^{4}\). We have
  \begin{gather}
    \phi^{*}(\alpha_{1}) = \alpha'_{1}+\alpha'_{3},
    \qquad
    \phi^{*}(\alpha_{2}) = \alpha'_{2}+\alpha'_{3},
    \qquad
    \phi^{*}(\alpha_{3}) = \alpha'_{3}, \\
    \phi^{*}(\beta) = \beta'
    \qquad
    \hatphi^{*}(c) = c',
  \end{gather}
  so that for the untwisted map~\(\Tor(\phi)\) we get
  \begin{gather}
    \Tor(\phi)\colon
    a_{1} \mapsto a'_{1},
    \quad
    a_{2} \mapsto a'_{2}, 
    \quad
    b \mapsto b',
    \quad
    c \mapsto c'.
  \end{gather}
  This implies that the two cohomology classes
  \begin{align}
    \Tor(\phi)(a_{1} * a_{2}) &= \Tor(\phi)(b-c) = b'-c', \\
    \Tor(\phi)(a_{1})*\Tor(\phi)(a_{2}) &= a'_{1} * a'_{2} = b'
  \end{align}
  differ. Thus \(\Tor(\phi)\) is not multiplicative.

  We now look at the twisted map~\(\hatTor{\phi}\). Since \(\hatA\) is the identity map,
  formula~\eqref{eq:hatq-general} yields trivial twisting terms~\(\hatq_{ij}\), hence \(\hatKtw=\Ktw\).
  We therefore have
  \begin{align}
    \MoveEqLeft{\hatTor{\phi}(a_{1}) = a'_{1}, \qquad \hatTor{\phi}(a_{2}) = a'_{2}, \qquad\hatTor{\phi}(c) = c',} \\
    \hatTor{\phi}(b) &= \bigl[\hatKtw(\beta)\bigr] = \bigl[\Ktw(\alpha_{1}\,\alpha_{2} + \alpha_{2}\,\alpha_{3} - \alpha_{1}\,\alpha_{3})\bigr] \\
    \notag &= [(\alpha'_{1}+\alpha'_{3})*(\alpha'_{2}+\alpha'_{3}) + (\alpha'_{2}+\alpha'_{3})*\alpha'_{3} - (\alpha'_{1}+\alpha'_{3})*\alpha'_{3}]\\
    \notag &= [\alpha'_{1}\,\alpha'_{2} + q'_{33}] = b'+c',
  \end{align}
  which gives
  \begin{equation}
    \hatTor{\phi}(a_{1}*a_{2}) = \hatTor{\phi}(b-c) = b' = \hatTor{\phi}(a_{1})*\hatTor{\phi}(a_{2}),
  \end{equation}
  as predicted by \Cref{thm:nat-general}.
\end{example}

We further illustrate \Cref{thm:nat-general} by the diagonal map and power maps.
Recall that the \newterm{join}~\(\Sigma_1*\Sigma_2\) of the simplicial posets~\(\Sigma_1\) and~\(\Sigma_2\)
is a simplicial poset consisting of the pairs~\((\sigma, \tau)\) with~\(\sigma\in \Sigma_1\) and~\(\tau\in \Sigma_2\)
(which may be empty simplices).

\begin{example}
  We consider the diagonal map
  \begin{equation}
    \parq{\posetB}{\KK}\to\parq{\posetB}{\KK}\times\parq{\posetB}{\KK} = \bigparq{\posetB*\posetB}{\KK\times \KK}
  \end{equation}
  of a partial quotient~\(\parq{\posetB}{\KK}=\ZZ_{\posetB}/\KK\).
  The vertex set of the join~\(\posetB*\posetB\) consists of two disjoint copies~\(V_{1}\) and~\(V_{2}\) of the vertex set~\(V\) for~\(\posetB\).
  We write the two vertices corresponding to~\(v\in V\) as~\(v_{1}\in V_{1}\) and~\(v_{2}\in V_{2}\).
  We extend the ordering of~\(V\) to~\(V_{1}\sqcup V_{2}\) by defining all vertices in~\(V_{1}\)
  to be smaller than all vertices in~\(V_{2}\). The characteristic matrix for~\(\bigparq{\posetB*\posetB}{\KK\times \KK}\)
  is determined by
  \begin{equation}
    \xv{v_{1}} = (v,0) \in N\oplus N,
    \qquad\text{and}\qquad
    \xv{v_{2}} = (0,v) \in N\oplus N
  \end{equation}
  for~\(v\in V\). The chosen coordinates for~\(N\) give coordinates for~\(N\oplus N\)
  by first taking the \(n\)~coordinates of the first summand, followed by the \(n\)~coordinates
  of the second summand.
  
  The diagonal map~\(A\colon N\to N\oplus N\) lifts to~\(\hatA\colon\hatN\to\hatN\oplus\hatN\) with
  \begin{equation}
    \hatawv{w}{v} = \begin{cases}
      1 & \text{if \(w=v_{1}\) or~\(w=v_{2}\),} \\
      0 & \text{otherwise} \\
    \end{cases}
  \end{equation}
  for~\(v\in V\) and~\(w\in V_{1}\sqcup V_{2}\).
  This implies that the first term inside the bracket in the formula~\eqref{eq:hatq-general}
  for~\(\hatq_{ij}\) vanishes. In fact, the second term vanishes, too:
  For a given~\(v\in V\) only the coefficients~\(\hatawv{v_{1}}{v}\) and~\(\hatawv{v_{2}}{v}\)
  are non-zero. For~\(v_{1}\) only the first half of the coordinates can be non-zero,
  and for~\(v_{2}\) only the second half. Hence \(\xvi{v_{1}}{i}\ne0\)
  implies \(\xvi{v_{2}}{j}=0\) for~\(j<i\). All coefficients~\(\hatq_{ij}\) thus vanish.

  Because the way we have chosen the coordinates on~\(N\oplus N\), this means that the map
  \begin{equation}
    \hatKtw = \Ktw \colon \Kl_{\posetB*\posetB} = \Kl_{\posetB}\otimes\Kl_{\posetB} \to \Kl_{\posetB}
  \end{equation}
  is in fact the twisted \(*\)-product.
  Since the diagonal map induces the cup product in cohomology,
  one can deduce from this the description of the cup product given in \Cref{thm:iso-mult-general}.
  As the examples given in~\cite[Ex.~1.1, Sec.~9]{Franz:torprod} show, the cup product is twisted in general,
  which again illustrates that \(\hatTor{\phi}\) may differ from the canonical map between the torsion products.
\end{example}

\begin{example}
  Let \(r\in\N\). Given a simplicial poset~\(\posetB\) and a closed subgroup~\(\KK\subset\TT\) acting freely on~\(\ZZ_{\posetB}\)
  (or instead a regular fan~\(\Sigma\)),
  consider the \(r\)-th power map~\(\phi\) on the partial quotient~\(\parq{\posetB}{\KK}\) (or the smooth toric variety~\(\XXX_{\Sigma}\)).
  This is a toric morphism with \(A=r\,\Id_{N}\) and \(\nu=\Id_{\posetB}\).
  In this case the map~\(\Ktw\) is given by
  \begin{equation}\label{map: phi power map}
    \Ktw(\alpha\otimes f) = r^{k+l}\,\alpha\otimes f
  \end{equation}
  for~\(\alpha\in H^{k}(L)\) and~\(f\in\kk[\posetB]\) of degree~\(2l\), and we also have \(\hatA=r\,\Id_{\hatN}\), that is,
  \begin{equation}
    \hatawv{\vB}{\vA} = \begin{cases}
      r & \text{if \(\vB=\vA\),} \\
      0 & \text{otherwise.}
    \end{cases}
  \end{equation}
  Thus, the formula~\eqref{eq:hatq-general} reduces to
  \begin{equation}
    \hatq_{ij} = - \frac{r\,(r-1)}{2}\,q_{ij}.
  \end{equation}
  We therefore get
  \begin{equation}
    \label{map_hatphi_powermap}
    \hatKtw (\alpha\otimes f) = r^{k+l}\,\alpha\otimes f
    - \frac{r^{k+l-1}(r-1)}{2} \sum_{i> j} \iota(x_{i})\,\iota(x_{j})\,\alpha \otimes f\,q_{ij}.
  \end{equation}
  Note that the sum on the right vanishes for~\(k<2\). For~\(k\ge2\) we have \(k+l-1\ge1\),
  which implies that the factor in front of the sum is an integer.
\end{example}

\begin{example}
  \label{ex:nontrivial-qij-powermap}
  We consider the \(r\)-th power map~\(\phi\colon \XXX_\Sigma\to \XXX_\Sigma\) for the toric variety given in \Cref{ex:nontrivial-qij-mult}.
  By formula~\eqref{map_hatphi_powermap} we have in cohomology
  \begin{gather}
    \hatTor{\phi}(a_{1}) = r\,a_{1}, \qquad \hatTor{\phi}(a_{2}) = r\,a_{2}, \qquad \hatTor{\phi}(c) = r\,c, \\
    \hatTor{\phi}(b) = r^{2}\,[\beta] - \frac{r(r-1)}{2}\,\bigl[q_{21}+q_{32}-q_{31}\bigr] = r^{2}\,b - r(r-1)\,c.
  \end{gather}
  To illustrate that \(\hatTor{\phi}\) is indeed multiplicative, we compute
  \begin{multline}
    \qquad \hatTor{\phi}(a_{1}*a_{2}) = \hatTor{\phi}(b-c) = r^{2}\,b - r(r-1)\,c - r\,c \\
    = r^{2}\,(b-c) = (r\,a_{1})*(r\,a_{2}) =\hatTor{\phi}(a_{1})*\hatTor{\phi}(a_{2}). \qquad
  \end{multline}
\end{example}

\subsection{Moment-angle complexes and Cox constructions}
\label{sec:macs}

Every moment-angle complex~\(\ZZ_{\posetB}\) is itself a partial quotient (with~\(\KK=1\)).
In the same vein, every Cox construction~\(\ZZZ_{\Sigma}\) (or complement of a complex coordinate subspace arrangement)
is a toric variety. As shown by Franz~\cite[Thm.~1.3]{Franz:2003a} and Baskakov--Buchstaber--Panov~\cite{BaskakovBuchstaberPanov:2004},
the multiplication is untwisted in this case. This is confirmed by the fact that all~\(q_{ij}\)'s vanish if
the \(\xv{v}\)'s are part of the canonical basis~\((e_{v})\) for~\(N=\hatN\), compare~\cite[Rem.~7.4]{Franz:torprod}
and also \Cref{ex:nontrivial-qij-mult}.
In this section we make several additional observations regarding moment-angle complexes and Cox constructions.

\begin{proposition}
  \label{thm:tw-vanish-mac}
  Let \((A,\nu)\colon (\hatNA,\posetA)\to(\hatNB,\posetB)\) be a toric morphism,
  inducing a map~\(\phi\colon \ZZ_{\posetA}\to\ZZ_{\posetB}\). Then
  \begin{equation*}
    \hatTor{\phi} = \Tor(\phi)\colon \Tor_{H^{*}(B\TB)}\bigl(\kk,\kk[\posetB]\bigr) \to \Tor_{H^{*}(B\TA)}\bigl(\kk,\kk[\posetA]\bigr).
  \end{equation*}
\end{proposition}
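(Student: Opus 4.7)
The plan is to apply \Cref{thm:nat-general} together with the explicit formula \eqref{eq:hatq-general} for the twisting terms, and verify that in the moment-angle-complex setting everything collapses to the untwisted situation. The key observation is that for a moment-angle complex we have \(\KK=1\), so \(\LB=\TB\) and \(\NB=\hatNB=\Z^{\VB}\) with its canonical basis, hence the characteristic matrix is the identity: if we use the canonical basis for~\(\NB\) indexed by~\(\VB\), then \(\xvi{\vB}{i}=\delta_{\vB,i}\) where \(i\in\VB\).

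First I would check that the \(*\)-product on both \(\Kl_{\SigmaB}\) and \(\Kl_{\SigmaA}\) is already the canonical (untwisted) product. By formula~\eqref{eq:def-qij-general}, \(q_{ii}\) involves the factor \(\xvi{\vB}{i}(\xvi{\vB}{i}-1)/2\), which vanishes because \(\xvi{\vB}{i}\in\{0,1\}\); and \(q_{ij}\) for~\(i>j\) involves \(\xvi{\vB}{i}\,\xvi{\vB}{j}\), which vanishes because \(i\ne j\) forces at least one factor to be zero. Consequently the \(*\)-product coincides with the canonical product on both torsion products.

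Next I would show that each twisting term~\(\hatq_{ij}\) vanishes for~\(j<i\). Plugging \(\xvi{\vB}{i}=\delta_{\vB,i}\) into~\eqref{eq:hatq-general}, the first inner sum contributes only when \(i=j\), which is excluded. The second inner sum contributes only when \(\vB=i\) and \(\wB=j\) with \(\vB<\wB\), i.e.\ \(i<j\); this also contradicts \(j<i\). Hence \(\hatq_{ij}=0\) for all relevant \(i,j\).

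With both sets of twisting terms trivial, the definition~\eqref{eq:def-hatKtw} of~\(\hatKtw\) collapses: the correction sum disappears, and \(\Ktw(\alpha_{i_{1}}\cdots\alpha_{i_{k}}\otimes f)\) reduces to \(\phi^{*}(\alpha_{i_{1}})\cdots\phi^{*}(\alpha_{i_{k}})\otimes\hatphi^{*}(f)\) because the \(*\)-product on~\(\Kl_{\SigmaA}\) is the canonical one. This is precisely the chain map~\eqref{eq:def-phi-Kl} inducing \(\Tor(\phi)\), so \Cref{thm:nat-general} yields \(\hatTor{\phi}=\Tor(\phi)\). There is no real obstacle here; the statement is essentially a direct bookkeeping corollary of \Cref{thm:nat-general}, with the only substantive point being the choice of canonical basis that makes the characteristic matrix trivial.
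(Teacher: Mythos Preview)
Your proposal is correct and follows essentially the same approach as the paper: both arguments exploit that for moment-angle complexes the characteristic matrix is the identity (\(\xvi{\vB}{i}=\delta_{\vB,i}\)), then plug this into formula~\eqref{eq:hatq-general} to see that all \(\hatq_{ij}\) vanish, and use the untwistedness of the \(*\)-product on~\(\Kl_{\SigmaA}\) to conclude that \(\hatKtw\) coincides with the canonical chain map~\eqref{eq:def-phi-Kl}. Your write-up is slightly more explicit in verifying that the \(q_{ij}\)'s vanish (the paper records this fact just before the proposition and simply cites it), but the argument is the same.
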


Together with \Cref{thm:nat-general} this means that
the map~\(\Psi_{\posetB}\colon H^{*}(\ZZ_{\posetB})\to \Tor_{H^{*}(BT)}(\kk,\kk[\posetB])\)
is natural with respect to toric morphisms between moment-angle complexes and the canonical maps
between the associated \(\Tor\)~terms.

\begin{proof}
  We start by looking at formula~\eqref{eq:hatq-general} for~\(\hatq_{ij}\).
  Since the vectors~\(\xv{\vB}=e_{\vB}\) are part of the canonical basis for~\(\NB=\hatNB\)
  (with the same indexing set), we have
  \begin{equation}
    \xvi{\vB}{i} = \begin{cases}
      1 & \text{if \(i=\vB\),} \\
      0 & \text{otherwise.}
    \end{cases}
  \end{equation}
  This implies that the first term inside the brackets in~\eqref{eq:hatq-general} vanishes,
  and in fact also the second because we have \(j<\wB\) if \(i=\vB\). Hence all~\(\hatq_{ij}\)'s vanish.
  
  Because the product in~\(\Kl_{\posetA}\) is untwisted, this implies that \(\hatKtw=\Ktw\)
  is the canonical map~\eqref{eq:def-phi-Kl} induced by~\((A,\nu)\). This proves the claim.
\end{proof}

\begin{proposition}
  \label{thm:tw-vanish-proj}
  Let \(\kappa\colon\ZZ_{\posetB}\to\parq{\posetB}{\KK}\) be the canonical projection map of a partial quotient. Then
  \begin{equation*}
    \hatTor{\kappa} = \Tor(\kappa)\colon \Tor_{H^{*}(BL)}\bigl(\kk,\kk[\posetB]\bigr) \to \Tor_{H^{*}(BT)}\bigl(\kk,\kk[\posetB]\bigr),
  \end{equation*}
  where the canonical basis for~\(N'=\hatN\) is used when computing \(\hatTor{\kappa}\).
\end{proposition}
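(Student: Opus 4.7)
The plan is to use \Cref{thm:nat-general} applied to the toric morphism $(A,\nu) = (\kappa,\mathrm{Id}_{\Sigma})\colon (\hatN,\Sigma)\to(N,\Sigma)$ that realizes the projection~$\kappa\colon\ZZ_{\Sigma}\to\XX_{\Sigma}$. The source here is the partial quotient $\ZZ_{\Sigma} = \ZZ_{\Sigma}/1$, so that the source lattice is $\hatN$ and the source characteristic vectors are simply the canonical basis vectors~$e_{v}\in\hatN$. Then $\hatTor{\kappa}$ is determined by the twisted chain map~$\hatKtw$ from~\eqref{eq:def-hatKtw}, which in turn depends on the twisting elements~$\hatq_{ij}$ and on the $*$-product of~$\Kl_{\Sigma}$ on the source side. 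The claim reduces to verifying two things: first, that a natural choice of lift $\hatA$ makes all $\hatq_{ij}$ vanish; second, that the $*$-product on~$\Kl_{\ZZ_{\Sigma}}$ is in fact the canonical (untwisted) one.

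For the first step, I would take the obvious lift $\hatA = \mathrm{Id}_{\hatN}\colon \hatN\to\hatN$, which fulfills~\eqref{eq:pi-A-e-x} because $\kappa(e_{v}) = \xv{v}$ already. Its matrix is $\hatawv{w}{v} = \delta_{v,w}$. Plugging this into formula~\eqref{eq:hatq-general}, both summands inside the parentheses vanish: in the first, each binomial coefficient $\tfrac{\hatawv{w}{v}(\hatawv{w}{v}-1)}{2}$ equals~$0$ since $\hatawv{w}{v}\in\{0,1\}$; in the second, the product $\hatawv{v}{v'}\hatawv{w}{v'}$ vanishes whenever $v<w$, since at most one of these coefficients can be non-zero. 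Hence $\hatq_{ij}=0$ for all~$1\le j<i\le n$, and the sum correction term in~\eqref{eq:def-hatKtw} drops out, leaving $\hatKtw = \Ktw$.

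For the second step, I would apply the same logic to the twisting terms~\eqref{eq:def-qij-general} defining the $*$-product on the source Koszul complex $\Kl_{\ZZ_{\Sigma}}$. The relevant characteristic vectors there are $\xv{v}=e_{v}$, so the coefficients $\xvi{v}{i}=\delta_{v,i}$ are again in $\{0,1\}$ with disjoint supports. As above, both $q_{ii}$ and $q_{ij}$ vanish in~$\Z[\Sigma]$, so the $*$-product on $\Kl_{\ZZ_{\Sigma}}$ coincides with the ordinary product; this is the known fact for Cox constructions cited right before the proposition. Consequently $\Ktw$, as defined in~\eqref{eq:def-Ktw}, collapses to the canonical chain map~\eqref{eq:def-phi-Kl} induced by $(\kappa,\mathrm{Id})$, which in cohomology induces $\Tor(\kappa)$.

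There is essentially no obstacle here beyond choosing the right lift and reading off that the relevant integer coefficients are $0$ or~$1$; the work is done once \Cref{thm:nat-general} and formula~\eqref{eq:hatq-general} are in hand. The one small point worth stating carefully is that the lift $\hatA=\mathrm{Id}_{\hatN}$ is canonical on non-ghost vertices by \Cref{rem:hatqij-unique}, so the conclusion $\hatq_{ij}=0$ does not depend on any ambiguity in lifting~$A$; on ghost vertices the corresponding $t_{v}\in\kk[\Sigma]$ vanish anyway and contribute nothing to $\hatq_{ij}$.
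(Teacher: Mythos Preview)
Your proof is correct and follows essentially the same approach as the paper, which simply says the argument is ``analogous to the proof of \Cref{thm:tw-vanish-mac}'' because the domain is a moment-angle complex. You have spelled out what that analogy means concretely: the lift~\(\hatA=\mathrm{Id}_{\hatN}\) forces \(\hatawv{w}{v'}=\delta_{w,v'}\), so both summands in~\eqref{eq:hatq-general} vanish, and the untwisted \(*\)-product on the source then gives \(\hatKtw=\Ktw=\) the canonical map~\eqref{eq:def-phi-Kl}.
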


\begin{proof}
  Since the domain of~\(\kappa\) is a moment-angle complex,
  this is analogous to the proof of \Cref{thm:tw-vanish-mac}.
\end{proof}

Let us define the ideal
\begin{equation}
  \label{eq:def-I-Sigma}
  I_{\posetB} = \ker \Tor(\kappa) = \ker \hatTor{\kappa} \lhd \Tor_{H^{*}(BL)}\bigl(\kk,\kk[\posetB]\bigr).
\end{equation}

\begin{corollary}
  Let \((A,\nu)\colon (\NA,\posetA)\to(\NB,\posetB)\) be a toric morphism,
  inducing a map~\(\phi\colon\parq{\posetA}{\KK'}\to\parq{\posetB}{\KK}\) between partial quotients.
  Modulo~\(I_{\posetA}\), we have a congruence
  \begin{equation*}
    \hatTor{\phi} \equiv \Tor(\phi)\colon \Tor_{H^{*}(B\LB)}\bigl(\kk,\kk[\posetB]\bigr) \to \Tor_{H^{*}(B\LA)}\bigl(\kk,\kk[\posetA]\bigr).
  \end{equation*}
\end{corollary}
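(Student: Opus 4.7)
The plan is to exploit the commutative square
\[
  \begin{tikzcd}
    \ZZ_{\SigmaA} \arrow{r}{\hatphi} \arrow{d}[left]{\kappaA} & \ZZ_{\SigmaB} \arrow{d}{\kappaB} \\
    \XX_{\SigmaA} \arrow{r}{\phi} & \XX_{\SigmaB}
  \end{tikzcd}
\]
relating the partial quotients with their Cox constructions, in combination with the naturality result \Cref{thm:nat-general}. The idea is that although $\hatTor{\phi}$ is twisted in general, the three other arrows are covered by the vanishing results \Cref{thm:tw-vanish-proj,thm:tw-vanish-mac}, which will force the discrepancy $\hatTor{\phi}-\Tor(\phi)$ to die after post-composition with $\Tor(\kappaA)$.

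First I would invoke \Cref{thm:nat-general} for each of the four arrows individually. Translating the identity $\kappaA^{*}\phi^{*}=\hatphi^{*}\kappaB^{*}$ of cohomology maps across the isomorphisms~$\Psi_{\Sigma}$ yields
\[
  \hatTor{\kappaA}\circ\hatTor{\phi} = \hatTor{\hatphi}\circ\hatTor{\kappaB}.
\]
Next I would substitute $\hatTor{\kappaA}=\Tor(\kappaA)$ and $\hatTor{\kappaB}=\Tor(\kappaB)$ by \Cref{thm:tw-vanish-proj}, and $\hatTor{\hatphi}=\Tor(\hatphi)$ by \Cref{thm:tw-vanish-mac} (since $\hatphi$ is a toric morphism between moment-angle complexes). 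Using the standard functoriality of the canonical $\Tor$-map, this chain of equalities simplifies to
\[
  \Tor(\kappaA)\circ\hatTor{\phi}
  = \Tor(\hatphi)\circ\Tor(\kappaB)
  = \Tor(\kappaB\circ\hatphi) = \Tor(\phi\circ\kappaA)
  = \Tor(\kappaA)\circ\Tor(\phi).
\]

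The conclusion is then immediate: $\hatTor{\phi}-\Tor(\phi)$ takes values in $\ker\Tor(\kappaA)=I_{\SigmaA}$ by the definition~\eqref{eq:def-I-Sigma}, which is the claimed congruence. I do not anticipate a genuine obstacle here; the only piece of bookkeeping is to ensure that the bases for $\hatNA$ and $\hatNB$ are the canonical ones demanded by \Cref{thm:tw-vanish-proj,thm:tw-vanish-mac} and that they are consistent with the bases for $\NA$ and $\NB$ implicit in $\hatTor{\phi}$. Conceptually, the corollary simply says that whatever twist is present in $\hatTor{\phi}$ becomes invisible after lifting to the Cox construction.
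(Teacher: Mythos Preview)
Your proof is correct and follows essentially the same route as the paper: the paper presents the argument as a cube-diagram chase with the $\Psi$-isomorphisms on the horizontal faces, but the logical content is exactly the chain of equalities you wrote down, invoking \Cref{thm:nat-general} for each edge and then \Cref{thm:tw-vanish-proj,thm:tw-vanish-mac} plus functoriality of the canonical $\Tor$ to force the difference into $\ker\Tor(\kappaA)$.
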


\begin{proof}
  We consider the following diagram.
  \begin{equation}
    \begin{tikzcd}[column sep=small]
      H^{*}(\ZZ_{\posetB}) \arrow[dd,"\hatphi^{*}"'] \arrow[rrr,"\Psi_{\posetB}"] & & & \Tor_{H^{*}(B\TB)}\mathrlap{(\kk,\kk[\posetB])} \arrow[dd,"\Tor(\hatphi)" near end] \\
      & H^{*}(\parq{\posetB}{\KK}) \arrow[ul,"\kappaB^{*}"] \arrow[rrr,crossing over,"\Psi_{\posetB}" near start] & & & \Tor_{H^{*}(B\LB)}\mathrlap{(\kk,\kk[\posetB])} \arrow[dd,"\Upsilon"] \arrow[ul,"\Tor(\kappaB)"' near start] \\
      H^{*}(\ZZ_{\posetA}) \arrow[rrr,"\Psi_{\posetA}"' near end] & & & \Tor_{H^{*}(B\TA)}\mathrlap{(\kk,\kk[\posetA])} \\
      & H^{*}(\parq{\posetA}{\KK'}) \arrow[from=uu,crossing over,"\phi^{*}"' near start] \arrow[ul,"(\kappaA)^{*}"] \arrow[rrr,"\Psi_{\posetA}"'] & & & \Tor_{H^{*}(B\LA)}\mathrlap{(\kk,\kk[\posetA])} \arrow[ul,"\Tor(\kappaA)"]
    \end{tikzcd}
  \end{equation}
  The left square commutes by naturality, and so does the right one
  if one takes \(\Upsilon=\Tor(\phi)\) as the rightmost vertical map.
  The back commutes by \Cref{thm:tw-vanish-mac} and the top and bottom squares by \Cref{thm:tw-vanish-proj},
  each time combined with \Cref{thm:nat-general}. The latter result also guarantees
  that the front commutes for~\(\Upsilon=\hatTor{\phi}\).
  Altogether this implies that \(\hatTor{\phi}\) and~\(\Tor(\phi)\) agree modulo~\(I_{\posetA}\).
\end{proof}

\begin{remark}
  \label{rem:I-Sigma-large}
  The ideal~\(I_{\posetB}\) can be quite large: It is not difficult to see
  that the submodule~\(1\otimes\kk[\posetB]\) of the Koszul complex for~\(\ZZ_{\posetB}\) consists of coboundaries.
  Hence \(I_{\posetB}\) contains the image of~\(\kk[\posetB]\) in~\(\Tor_{H^{*}(BL)}(\kk,\kk[\posetB])\),
  that is, the restrictions of all equivariant cohomology classes of~\(\parq{\posetB}{\KK}\).
  If \(\parq{\posetB}{\KK}\) is equivariantly formal, then this implies that \(I_{\posetB}\) contains all cohomology classes of positive degree.
\end{remark}

\subsection{Naturality if \texorpdfstring{\(2\)}{2} is invertible}
\label{sec:nat-2}

In this section we prove \Cref{thm:intro:nat-2} and its generalization to partial quotients.

If \(2\) is invertible in~\(\kk\), then we can choose the representative
\begin{equation}
  \label{eq:def-tildec}
  \tilde{c} = \smallhalf\,[1] - \smallhalf\,[-1] = \smallhalf (c - \iota_{*}\,c) \in C_{1}(S^{1}),
\end{equation}
where \(c=[1]\) is the representative used in \Cref{sec:nat-general}
and \(\iota\colon T\to T\) the group inversion. This leads to the isomorphism
\begin{equation}
  \tilde{\Psi}_{\posetB} = \Psi_{\posetB,\tilde{c}} \colon H^{*}(\parq{\posetB}{\KK}) \to \Tor_{H^{*}(BL)}\bigl(\kk,\kk[\posetB]\bigr)
\end{equation}
constructed in \cite[Sec.~8]{Franz:torprod} and featuring in \Cref{thm:iso-mult-2}, and analogously we obtain~\(\tilde{\Psi}_{\posetA}\).

For any~\(\vA\in\VA\), let \(b_{\vA}\in C_{2}(\TB)\) be a chain as in~\eqref{eq:def-bv}
based on the representative~\(c\) both for~\(\DJ_{\posetB}\) and~\(\DJ_{\posetA}\). For example, we can take
the elements chosen in \Cref{sec:nat-general}.
We obtain new elements~\(\tilde{b}_{\vA}\) in the same way as in~\eqref{eq:def-tildec},
\begin{equation}
  \tilde{b}_{\vA} = \smallhalf (b_{\vA} - \iota_{*}\,b_{\vA}) \in C_{2}(\TB)
\end{equation}
for~\(\vA\in\VA\). If we write \(\ell(\vA)\) for the number of \(2\)-simplices~\(\bis{\vA}{s}\)
appearing in~\(b_{\vA}\), then we can express each
\begin{equation}
  \tilde{b}_{\vA} = \sum_{s=1}^{2\ell(\vA)} \tilde{b}_{\vA s}
\end{equation}
as a linear combination of \(2\,\ell(\vA)\) (not necessarily distinct) \(2\)-simplices where
\begin{equation}
  \tildelambdavs{\vA}{s} = \smallhalf\,\lambdavs{\vA}{s},
  \quad
  \tilde{b}_{\vA s} = \bis{\vA}{s},
  \quad
  \tildelambdavs{\vA}{s+\ell(\vA)} = -\smallhalf\,\lambdavs{\vA}{s},
  \quad
  \tilde{b}_{\vA,s+\ell(\vA)} = \iota_{*}\,\bis{\vA}{s}
\end{equation}
for~\(1\le s\le\ell(\vA)\).

We have \(\iota_{*}\,[\ww^{1},\ww^{2}]=[-\ww^{1},-\ww^{2}]\) for any \(2\)-simplex~\([\ww^{1},\ww^{2}]\) in~\(S^{1}=B\Z\).
For the coordinates~\(\tildewwonevsv{\vA}{s}{\vB}\) and~\(\tildewwtwovsv{\vA}{s}{\vB}\)
of the \(2\)-simplices~\eqref{eq:def-wvj} this implies in our present context that
\begin{equation}
  \tildewwonevsv{\vA}{s+\ell(\vA)}{\vB} = - \tildewwonevsv{\vA}{s}{\vB}
  \qquad\text{and}\qquad
  \tildewwtwovsv{\vA}{s+\ell(\vA)}{\vB} = - \tildewwtwovsv{\vA}{s}{\vB}
\end{equation}
for all~\(v\in V\),~\(v'\in V'\) and~\(1\le s\le\ell(\vA)\). A look at \Cref{thm:hatq} now reveals
\begin{equation}
  \hatq_{ij} = 0
\end{equation}
for all~\(i\ge j\) because each term for~\(s\) cancels against the corresponding term for \(s+\ell(\vA).\)
The twisting terms~\(\tilde{q}_{ij}\) governing the \(*\)-product in~\(\Kl_{\posetA}\)
also vanish by \cite[Lemma~8.1]{Franz:torprod}. Hence
\begin{equation}
  \hatKtw(\alpha\otimes f) = \Ktw(\alpha\otimes f) = \phi^{*}(\alpha) \otimes \hatphi^{*}(f) \in \Kl_{\posetA}
\end{equation}
for any~\(\alpha\otimes f\in\Kl_{\posetB}\). \Cref{thm:main} therefore takes on the following form.

\begin{theorem}
  \label{thm:nat-2}
  Assume that \(2\) is invertible in~\(\kk\). The following diagram commutes.
  \begin{equation*}
    \begin{tikzcd}[column sep=huge]
      H^{*}(\parq{\posetB}{\KK}) \arrow{d}[left]{\tilde{\Psi}_{\posetB}} \arrow{r}{\phi^{*}} & H^{*}(\parq{\posetA}{\KK'}) \arrow{d}{\tilde{\Psi}_{\posetA}} \\
      \Tor_{H^{*}(B\LB)}\bigl(\kk,\kk[\posetB]\bigr) \arrow{r}{\Tor(\phi)} & \Tor_{H^{*}(B\LA)}\bigl(\kk,\kk[\posetA]\bigr)
    \end{tikzcd}
  \end{equation*}
\end{theorem}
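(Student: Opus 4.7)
My plan is to obtain \Cref{thm:nat-2} as a direct specialization of \Cref{thm:main}, where the point is to show that, with the choice of representative $\tilde c=\smallhalf(c-\iota_*c)\in C_1(S^1)$ made possible by the invertibility of~$2$, both sets of twisting terms that distinguish $\hatTor{\phi}$ from $\Tor(\phi)$ collapse to zero. Concretely, $\hatKtw$ differs from the naive map~\eqref{eq:def-phi-Kl} in two ways: first, the \(*\)-product on~$\Kl_{\SigmaA}$ into which the products $\phi^*(\alpha_{i_1})*\dots*\phi^*(\alpha_{i_k})$ are formed is twisted by the elements $\tilde q_{ij}$ of~\eqref{eq:def-qij}; second, $\hatKtw$ adds the correction sum indexed by~$i>j$ weighted by the elements $\hatq_{ij}$ of \Cref{thm:hatq}. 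The plan is to kill both.

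The vanishing of the $\tilde q_{ij}$ is not something I need to redo: it is precisely \cite[Lemma~8.1]{Franz:torprod}, which is exactly the statement that with the symmetrized representative~$\tilde c$ the \(*\)-product on $\Kl_\Sigma$ reduces to the canonical (untwisted) one. So once $\tilde c$ is chosen for both $\SigmaA$ and $\SigmaB$, the $*$ in the definition~\eqref{eq:def-hatKtw} of~$\hatKtw$ is genuinely the tensor product of $H^*(\LA)$ with $\kk[\SigmaA]$.

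The substantive step is the vanishing of $\hatq_{ij}$. For this I will take the chains $b_{\vA}\in C_2(\TB)$ from the construction in \Cref{sec:nat-general} (based on the unsymmetrized $c$) and symmetrize them to $\tilde b_{\vA}=\smallhalf(b_{\vA}-\iota_*b_{\vA})$, which still satisfies the boundary condition~\eqref{eq:def-bv} because $\check c_{\vA}$ and $\hat c_{\vA}$ are themselves symmetric combinations of $\tilde c$-style loops. Writing $\tilde b_{\vA}$ as a combination of $2\ell(\vA)$ simplices, the key observation is that $\iota_*[\ww^1,\ww^2]=[-\ww^1,-\ww^2]$, so the simplex $\tilde b_{\vA,s+\ell(\vA)}$ negates both front- and back-face coordinates of $\tilde b_{\vA s}$, while the two have the same weight up to sign. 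Plugging into the formula of \Cref{thm:hatq}, the summand for $s+\ell(\vA)$ is $(-\tildelambdavs{\vA}{s})\cdot(-\tildewwtwovsv{\vA}{s}{\vB})\cdot(-\tildewwonevsv{\vA}{s}{\wB})$, which equals the negative of the summand for $s$; they cancel in pairs and therefore $\hatq_{ij}=0$ for every $i\ge j$.

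With both the $\tilde q_{ij}$ and the $\hatq_{ij}$ gone, formula~\eqref{eq:def-hatKtw} reduces to $\hatKtw(\alpha\otimes f)=\phi^*(\alpha)\otimes \hatphi^*(f)$, i.e.\ the map that induces $\Tor(\phi)$ in cohomology. Feeding this into \Cref{thm:main} with $c=c'=\tilde c$ then gives exactly the commutative diagram in the statement. The only obstacle worth worrying about is the bookkeeping for the symmetrization, in particular checking that $\tilde b_{\vA}$ still witnesses $d\,\tilde b_{\vA}=\check c_{\vA}-\hat c_{\vA}$ with these symmetrized loops; but this is immediate once one observes that both $\check c_{\vA}$ and $\hat c_{\vA}$, built from $\tilde c$, are themselves skew-invariant under $\iota_*$, so the boundary condition is preserved on the nose.
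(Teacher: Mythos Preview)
Your proposal is correct and follows essentially the same route as the paper: symmetrize both the loop representative to~\(\tilde c\) and the chains~\(b_{\vA}\) to~\(\tilde b_{\vA}=\smallhalf(b_{\vA}-\iota_{*}b_{\vA})\), then use the sign flip under~\(\iota_{*}\) to get pairwise cancellation in the formula of \Cref{thm:hatq}, whence \(\hatq_{ij}=0\); combined with the vanishing of the~\(\tilde q_{ij}\) from \cite[Lemma~8.1]{Franz:torprod}, this collapses \(\hatKtw\) to the naive map and \Cref{thm:main} finishes. Your explicit check that \(d\,\tilde b_{\vA}=\check c_{\vA}-\hat c_{\vA}\) (via the observation that the symmetrized loops are negated by~\(\iota_{*}\), so the symmetrization operator commutes with the boundary condition) is in fact slightly more detailed than what the paper writes out.
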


\begin{remark}
  \Cref{thm:nat-2} can also be proven with the (more high-powered) techniques
  which are developed in~\cite{Franz:hgashc} and~\cite{Franz:homog} to show that
  for certain homogeneous spaces~\(G/K\) the isomorphism
  \begin{equation}
    H^{*}(G/K) \cong \Tor_{H^{*}(BG)}\bigl(\kk,H^{*}(BK)\bigr)
  \end{equation}
  is multiplicative and natural if \(2\) is invertible in~\(\kk\). In this approach one constructs an \Ainfty~map~%
  \( 
    \Lambda^{\posetB}\colon \kk[\posetB] \Rightarrow C^{*}(\DJ_{\posetB})
  \) 
  along with an \Ainfty~map~%
  \(
    \Lambda^{L}\colon H^{*}(BL) \Rightarrow C^{*}(BL)
  \). 
  The key fact is that the diagram
  \begin{equation}
    \begin{tikzcd}
      H^{*}(BL) \arrow{r}{\rho^{*}} \arrow[Rightarrow]{d}[left]{\Lambda^{L}} & \kk[\posetB] \arrow[Rightarrow]{d}{\Lambda^{\posetB}} \\
      C^{*}(BL) \arrow{r}{\rho^{*}} & C^{*}(DJ_{\posetB})
    \end{tikzcd}
  \end{equation}
  commutes up to an \Ainfty~homotopy that 
  behaves well with respect to the formality map~\(f_{\posetB,\tilde{c}}^{*}\colon C^{*}(DJ_{\posetB})\to\kk[\posetB]\),
  compare \cite[Cor.~10.9, Thm.~11.6]{Franz:homog}.\footnote{%
    This technical issue is crucial for proving the multiplicativity of the isomorphism.
    It is not addressed in the note~\cite{Panov:2015}, which is otherwise based on similar ideas.}
\end{remark}

\subsection{Comparing the two products on the Koszul complex}
\label{sec:compare}

Let \(\XX_{\Sigma}=\ZZ_{\posetB}/\KK\) be a partial quotient with quotient map~\(\kappa\).
For the moment, we allow a general coefficient ring~\(\kk\).
Recall from \Cref{thm:tw-vanish-proj} that \(\Tor(\kappa)=\hatTor{\kappa}\)
and that in~\eqref{eq:def-I-Sigma} we have defined the ideal~\(I_{\posetB}\lhd\Tor_{H^{*}(BL)}(\kk,\kk[\posetB])\)
as the kernel of this map.

\begin{proposition}
  For all~\(a\),~\(b\in\Tor_{H^{*}(BL)}(\kk,\kk[\posetB])\) we have
  \begin{equation*}
    a*b \equiv a\,b \pmod{I_{\posetB}}.
  \end{equation*}
\end{proposition}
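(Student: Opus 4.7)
The plan is to exploit the fact that the single map $\Tor(\kappa)=\hatTor{\kappa}$ (from \Cref{thm:tw-vanish-proj}) is simultaneously multiplicative with respect to \emph{both} the twisted and the canonical product. More precisely, I would proceed as follows.

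First, I would invoke \Cref{thm:tw-vanish-proj} to identify the canonical map $\Tor(\kappa)$ with the twisted map $\hatTor{\kappa}$. The map $\Tor(\kappa)$ is induced by the chain map
\begin{equation*}
  \Kl_{\Sigma}=H^{*}(L)\otimes \kk[\Sigma] \to H^{*}(T)\otimes\kk[\Sigma]=\Kl_{\Sigma}^{\ZZ}
\end{equation*}
sending $\alpha\otimes f$ to $\kappa^{*}(\alpha)\otimes f$, which is a morphism of graded commutative dgas in the usual (untwisted) sense. Hence $\Tor(\kappa)$ is multiplicative with respect to the canonical products on source and target.

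Next, I would observe that $\hatTor{\kappa}$ is multiplicative with respect to the twisted $*$-products. This follows by combining \Cref{thm:nat-general} applied to $\kappa$ with the fact that $\Psi_{\Sigma}$ and $\Psi_{\ZZ_{\Sigma}}$ are multiplicative isomorphisms (\Cref{thm:iso-mult-general}): the diagram
\begin{equation*}
  \begin{tikzcd}
    H^{*}(\XX_{\Sigma}) \arrow{d}[left]{\Psi_{\Sigma}}[right]{\cong} \arrow{r}{\kappa^{*}} & H^{*}(\ZZ_{\Sigma}) \arrow{d}[right]{\Psi_{\ZZ_{\Sigma}}}[left]{\cong} \\
    \Tor_{H^{*}(BL)}(\kk,\kk[\Sigma]) \arrow{r}{\hatTor{\kappa}} & \Tor_{H^{*}(BT)}(\kk,\kk[\Sigma])
  \end{tikzcd}
\end{equation*}
commutes, and the vertical maps intertwine the cup product with the $*$-product on each side.

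Finally, I would use that in the target $\Tor_{H^{*}(BT)}(\kk,\kk[\Sigma])$ the two products coincide: the characteristic vectors are the canonical basis of $\hatN$, so every $q_{ij}$ vanishes by~\eqref{eq:def-qij-general}, making the $*$-product identical to the canonical one on $\Kl_{\Sigma}^{\ZZ}$ (as already noted at the beginning of \Cref{sec:macs}). Combining the three observations yields
\begin{equation*}
  \Tor(\kappa)(a*b)=\hatTor{\kappa}(a)*\hatTor{\kappa}(b)=\Tor(\kappa)(a)\cdot\Tor(\kappa)(b)=\Tor(\kappa)(a\,b),
\end{equation*}
so that $a*b-a\,b\in\ker\Tor(\kappa)=I_{\Sigma}$. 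No substantial obstacle is expected; the only subtlety is bookkeeping which product lives where, and the whole argument reduces to the vanishing of the $q_{ij}$ for the Cox construction together with the compatibilities already established.
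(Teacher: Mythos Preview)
Your proposal is correct and follows essentially the same route as the paper: both argue that the single map $\Tor(\kappa)=\hatTor{\kappa}$ is multiplicative for the canonical products (as $\Tor(\kappa)$) and for the twisted products (as $\hatTor{\kappa}$, via \Cref{thm:nat-general} together with the multiplicativity of the $\Psi$'s), and then use that the $*$-product in the target is untwisted to conclude $a*b-a\,b\in\ker\Tor(\kappa)=I_{\Sigma}$. Your write-up just makes the implicit step---that multiplicativity of $\hatTor{\kappa}$ follows from the commutative square and the multiplicativity of the vertical isomorphisms---more explicit.
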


In other words, the deformation terms for the product in the \(\Tor\)~term for~\(\parq{\posetB}{\KK}\) vanish
when they are pulled back to~\(\ZZ_{\posetB}\) via \(\Tor(\kappa)=\hatTor{\kappa}\).

\begin{proof}
  By \Cref{thm:nat-general} the map~\(\hatTor{\kappa}\) is multiplicative
  with respect to the twisted products on both \(\Tor\)~terms. On the other hand,
  \(\Tor(\kappa)\) is so with respect to the canonical products. As remarked
  at the beginning of \Cref{sec:macs}, the \(*\)-product in~\(\Tor_{H^{*}(BT)}(\kk,\kk[\posetB])\) is in fact untwisted.
  Since the maps just mentioned agree, the difference~\(a*b-a\,b\) must lie in their common kernel.
\end{proof}

If \(2\) is invertible in~\(\kk\), then it follows from \Cref{thm:iso-mult-2,thm:iso-mult-general}
that both the twisted product and the canonical product on the Koszul complex~\(\Kl_{\posetB}\)
induce a multiplicative isomorphism in cohomology to~\(H^{*}(\parq{\posetB}{\KK})\).
We finally describe an automorphism of~\(\Kl_{\posetB}\) that in cohomology is multiplicative with respect to these two products.

\begin{proposition}
  \label{thm:invert2}
  Assume that \(2\) is invertible in~\(\kk\). The \(\kk[\posetB]\)-linear automorphism
  \begin{equation*}
     \AA\colon \Kl_{\posetB} \to \Kl_{\posetB},
    \qquad
    \alpha\otimes f \mapsto \alpha\otimes f
    + \frac{1}{2}\,\sum_{i>j} \iota(x_i)\,\iota(x_j)\,\alpha \otimes f\,q_{ij}
  \end{equation*}
  induces a multiplicative isomorphism in cohomology that makes the diagram
  \begin{equation*}
    \begin{tikzcd}[column sep=small]
      & H^{*}(\parq{\posetB}{\KK}) \arrow[dl,"\Psi_{\posetB}"'] \arrow[dr,"\tilde{\Psi}_{\posetB}"] \\
      \Tor_{H^{*}(BL)}\bigl(\kk,\kk[\posetB]\bigr) \arrow{rr}{\AA^{*}} & & \Tor_{H^{*}(BL)}\bigl(\kk,\kk[\posetB]\bigr)
    \end{tikzcd}
  \end{equation*}
  commute. Here the domain of~\(\AA\) is equipped with the twisted \(*\)-product and the codomain with the canonical product.
\end{proposition}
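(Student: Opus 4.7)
The plan is to apply \Cref{thm:main} to the identity map $\phi = \Id_{\XX_\Sigma}$, but with different representatives $c\in C_1(S^1)$ on the two sides of the square. In the notation of that theorem I would take $\cB = c = [1]$, so that $\Psi_{\SigmaB,\cB} = \Psi_\Sigma$ and the domain $\Kl_\Sigma$ of $\hatKtw$ carries the twisted $*$-product with $q_{ij}$ as in~\eqref{eq:def-qij-general}. On the other side I would take $\cA = \tilde c = \smallhalf([1] - [-1])$, so that $\Psi_{\SigmaA,\cA} = \tilde\Psi_\Sigma$; by the argument in \Cref{sec:nat-2} (which rests on \cite[Lemma~8.1]{Franz:torprod}) the corresponding twisting terms vanish and the codomain $\Kl_\Sigma$ of $\hatKtw$ carries the canonical product. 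Since $\hatA = \Id$ and $\nu = \Id$ for $\phi = \Id$, \Cref{thm:main} immediately yields the relation $\tilde\Psi_\Sigma = \hatTor{\Id}\circ\Psi_\Sigma$, which is exactly the triangle in the statement provided that the specialization of $\hatKtw$ to this setting equals $\AA$.

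For this identification I would choose explicit chains $b_v\in C_2(T_v)$ satisfying the condition
\begin{equation*}
   d b_v = \check c_v - \hat c_v = c_v - \tilde c_v = \smallhalf\bigl([e_v] + [-e_v]\bigr)
\end{equation*}
of~\eqref{eq:def-bv} for each non-ghost vertex $v$. The natural choice is $b_v = \smallhalf[-e_v\mid e_v]$, a single $2$-simplex with coefficient $\lambdavs{v}{1} = \smallhalf$. Its $1$-dimensional front and back faces represent the loops $-e_v$ and $e_v$ respectively, so plugging into the coordinate formula of \Cref{thm:hatq}, and using that both sides have the identity characteristic matrix so that the lattice coordinates reduce to Kronecker deltas, gives $\hatq_{ij} = \smallhalf\,q_{ij}$ for all $i > j$ once one keeps track of the signs dictated by the front/back convention of~\eqref{eq:equiv-diag-2}. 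Since the codomain product is canonical, $\Ktw$ in~\eqref{eq:def-Ktw} acts as the identity on exterior monomials, and the general formula~\eqref{eq:def-hatKtw} for $\hatKtw$ collapses to precisely the formula defining $\AA$.

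The remaining facts are elementary. That $\AA$ is a $\kk[\Sigma]$-linear automorphism follows from writing $\AA = \Id + \AA_1$ where $\AA_1$ strictly lowers the exterior length of the $H^*(L)$-factor by two; $\AA_1$ is therefore nilpotent in the finite filtration by exterior length, and $\AA$ is in particular bijective with inverse a finite geometric series. Multiplicativity of $\AA^{*}$ in cohomology, read as a map from $(\Tor,*)$ to $(\Tor,\cdot)$, is then forced by the identity $\tilde\Psi_\Sigma = \AA^{*}\circ\Psi_\Sigma$ together with the multiplicativity of both $\Psi_\Sigma$ and $\tilde\Psi_\Sigma$ granted by \Cref{thm:iso-mult-general} and~\Cref{thm:iso-mult-2}, using that the identity on $H^*(\XX_\Sigma)$ is trivially multiplicative. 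The main obstacle is the explicit computation of $\hatq_{ij}$, or more precisely the careful bookkeeping of signs and orientation conventions for the front and back faces entering \Cref{thm:hatq}; everything else is a direct specialization of the machinery assembled in \Cref{sec:main}.
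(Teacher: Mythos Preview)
Your strategy is exactly the one the paper uses: specialize \Cref{thm:main} to~\(\phi=\Id\) with \(c_{B}=c=[1]\) and \(c_{A}=\tilde c\), compute the resulting~\(\hatq_{ij}\) explicitly, and read off \(\hatKtw=\AA\). The arguments for bijectivity (nilpotent correction term) and for multiplicativity of~\(\AA^{*}\) (pulled back from~\(\Psi_{\Sigma}\) and~\(\tilde\Psi_{\Sigma}\)) are likewise the same.

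There is, however, a sign slip in the one explicit step. In the setup of~\eqref{eq:def-bv} the cycle~\(\check c_{v'}\) is the push-forward of the representative on the \(\Sigma'\)-side, while \(\hat c_{v'}\) is the linear combination of the representatives on the \(\Sigma\)-side. With your choices \(c'=\tilde c\) and \(c=[1]\) this gives
\[
  d\,b_{v} \;=\; \check c_{v}-\hat c_{v} \;=\; \tilde c_{v}-c_{v} \;=\; -\tfrac12\bigl([e_{v}]+[-e_{v}]\bigr),
\]
the opposite of what you wrote. Your chain \(b_{v}=\tfrac12[-e_{v}\,|\,e_{v}]\) therefore has the wrong boundary, and feeding it into \Cref{thm:hatq} yields \(\hatq_{ij}=-\tfrac12\,q_{ij}\), hence the wrong sign in~\(\AA\). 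The fix is exactly what the paper does: take \(b_{v}=-\tfrac12[e_{v},-e_{v}]\), so that \(\lambda_{v,1}=-\tfrac12\), \(\ww^{1}\leftrightarrow e_{v}\), \(\ww^{2}\leftrightarrow -e_{v}\), and \Cref{thm:hatq} then gives \(\hatq_{ij}=\tfrac12\,q_{ij}\) as required. Once this sign is corrected, your proposal coincides with the paper's proof.
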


\begin{proof}
  We apply \Cref{thm:main} to the case where \(\phi\) is the identity map on~\(\parq{\posetB}{\KK}\).
  We keep the canonical representative~\(c\in C_{1}(S^{1})\) from \Cref{sec:nat-general} 
  and choose \(\cA=\tilde{c}\) as in~\eqref{eq:def-tildec}.
  Moreover, we take \(b_{v}=-\smallhalf[e_{v},-e_{v}]\in C_{2}(T)\) for~\(v\in V=V'\), so that
  \begin{equation}
    d\,b_{v} = - \smallhalf\,[e_{v}] - \smallhalf\,[-e_{v}] = \tilde{c}_{v}-c_{v}.
  \end{equation}

  To compute \(\hatq_{ij}\), we first evaluate \eqref{eq:def-bij} and~\eqref{eq:def-wvj}
  to get the following data:
  \begin{equation}
    s=1,
    \qquad
    \lambda_{v,1}=-\smallhalf,
    \qquad
    \wwonevsv{\vA}{1}{\vB} = -\wwtwovsv{\vA}{1}{\vB} =
    \begin{cases}
      1 & \text{if \(\vA=\vB\),} \\
      0 & \text{otherwise}
    \end{cases}
  \end{equation}
  for \(\vA, \vB\in V\). Applying \Cref{thm:hatq}, we obtain the twisting terms 
  \begin{equation}
    \hatq_{ij}=\smallhalf\,q_{ij} \quad \text{for \(1\leq j<i\leq \nB\)}.
  \end{equation}
  
  Because \(\phi\) is the identity map, so is the map~\(\Ktw\).
  The map~\(\AA\) therefore is of the claimed form and in particular bijective.
  It induces an algebra map in cohomology because of the commutativity of the diagram above,
  which is the translation of the diagram from \Cref{thm:main} to our context.
\end{proof}

\begin{remark}
  Since we have used the isomorphisms~\(\Psi_{\posetB}\) and~\(\tilde\Psi_{\posetB}\)
  from \Cref{thm:iso-mult-2,thm:iso-mult-general}, our argument that
  \( 
    \AA^{*}\colon H^{*}(\Kl_{\posetB}) \to H^{*}(\Kl_{\posetB})
  \) 
  is multiplicative is topological in nature.
  It would be instructive to see a purely algebraic proof of this fact.
\end{remark}

We conclude with our recurring example.

\begin{example}
  \label{ex:nontrivial-qij-invert2}
  Consider once again the toric variety~\(\XXX_\Sigma\) from \Cref{ex:nontrivial-qij-mult}.
  By \Cref{thm:invert2} and formula~\eqref{eq:twisting terms in example} we have
  \begin{gather}
    \AA^{*}(a_{1}) = a_{1},
    \qquad
    \AA^{*}(a_{2}) = a_{2},
    \qquad
    \AA^{*}(c) = c, \\
    \AA^{*}(b) = b + \smallhalf\bigl[q_{21}+q_{32}-q_{31}\bigr] = b+c, 
  \end{gather}
  hence \(\AA\) does not induce the identity map in cohomology. Moreover,
  \begin{align}
    \AA^{*}(a_{1} * a_{2}) &= \AA^{*}(b-c) = b = \bigl[\alpha_{1}\,\alpha_{2} + \alpha_{2}\,\alpha_{3} - \alpha_{1}\,\alpha_{3}\bigr] \\
    \notag &= \bigl[(\alpha_{1}-\alpha_{3}) (\alpha_{2}-\alpha_{3})\bigr] = a_{1}\,a_{2} = \AA^{*}(a_{1})\,\AA^{*}(a_{2}),
  \end{align}
  which illustrates that \(\AA^{*}\) is multiplicative, in contrast to the identity map.
 \end{example}

\end{document}